 \documentclass{amsart}
\usepackage{graphicx} 
\usepackage{amsthm}
\usepackage{amsmath}
\usepackage{amssymb}
\usepackage{bbm}
\usepackage{cleveref}
\usepackage{eso-pic}
\usepackage[utf8]{inputenc}
\usepackage{tikz}
\usetikzlibrary{arrows.meta, positioning}
\usepackage{url}

\usepackage[top=2.74cm,bottom=2.74cm,left=2.5cm,right=2.5cm,marginparwidth=2cm, marginparsep=3mm]{geometry}

\newcommand{\A}{\mathbf{A}}
\newcommand{\B}{\mathbf{B}}
\newcommand{\G}{\mathbf{G}}
\newcommand{\I}{\mathbf{I}}
\newcommand{\Lg}{\mathbf{L}}
\newcommand{\Se}{\mathbf{S}}

\newcommand{\V}{\mathcal{V}}
\newcommand\SL{\mathcal{SL}}
\newcommand{\W}{\mathcal{W}}

\newcommand\PL{{\mathcal{P}}_{\textit{\l}}}

\newcommand\PLA{{\mathcal{P}_{\textit{\l}} (\mathbb{A})}}

\newtheorem{lemma}{Lemma}
\newtheorem{proposition}{Proposition}
\newtheorem{theorem}{Theorem}
\newtheorem{corollary}{Corollary}
\theoremstyle{definition}
\newtheorem{definition}{Definition}
\theoremstyle{remark}
\newtheorem{remark}{Remark}
\newtheorem{example}{Example}

\title{On some algebraic properties of P\l onka sums and regularized varieties}
\author{S. Bonzio and G. Zecchini}
\address{Dipartment of Mathematics and Computer Science \\ University of Cagliari, Italy.}
\email{stefano.bonzio@unica.it, giuseppe.zecchini1@gmail.com}

\date{}

\begin{document}

\begin{abstract}
\noindent P\l onka sums consist of a general construction that provides structural description for algebras in regularized varieties, whose examples range from Clifford semigroups to many algebras of logic including involutive bisemilattices, Bochvar algebras and certain residuated structures. While properties such as subdirectly irreducible algebras, subvariety lattices, and free algebras are well-understood for plural types without constants, the general case involving nullary operations remains largely unexplored. In this paper, we extend these results to algebraic types with constants and provide new insights into splittings within the lattice of subvarieties of a regularized variety. Furthermore, we offer a complete characterization of the congruences of a P\l onka sum and establish that the construction preserves surjective epimorphisms and injective monomorphisms.
\end{abstract}

\keywords{P\l onka sums, regularized varieties, lattice of subvarieties, splittings, free algebras, congruences, epimorphisms surjectivity.}
\subjclass[2020]{03C05, 08B15, 08B20.}

\maketitle

\section{Introduction}



The construction of P\l onka sums, introduced by Jerzy P\l onka in the late 1960s \cite{Plo67}, represents one of the most significant tools in universal algebra for decomposing algebras and characterizing varieties defined by regular identities. Historically, this construction found its roots in semigroup theory, where it was developed to represent Clifford semigroups. To semigroup theorists' the construction is widely known under the name of \emph{strong semilattice} of groups. In recent years, the relevance of Płonka sums has extended well beyond its original scope, finding significant applications in the study of non-classical logics, where they provide the natural algebraic framework for the logics of variable inclusion \cite{Bonziobook}. Furthermore, the construction has been increasingly employed beyond the theory of regular(ized) varieties, for which it turns out to be very useful \cite{Romanowska86},  reaching into the realm of residuated structures \cite{BalRes,bonzio2025balancedresiduatedpartiallyordered} and other classes of algebras arising from non-classical logics \cite{SMikBochvar} and connected to the logics of variable inclusion \cite{BonzioMorascoPrabaldi,Bonziobook,Bonziocontainment2}.

Given a variety $\V$, its regularization $R(\V)$ is defined as the variety satisfying the set of all regular identities (i.e. identities where the same variables appear on both sides) satisfied by $\V $. Under the mild additional assumptions that $\V$ is \emph{strongly irregular} (that is $\V\models x\cdot y\approx x$, for some term-definable operation $x\cdot y$), a property met by the majority of known irregular varieties, and of \emph{plural} type (i.e. the language of $\V$ contains at least an operation of arity strictly greater than one) it can be shown that every algebra in $R(\V)$ is (respresentable as) a P\l onka sum (over a semilattice direct system) of algebras is $\V$. Over the years, several fundamental algebraic properties of P\l onka sums and regularized varieties have been established. These include the characterization of subdirectly irreducible algebras \cite{Lakser72}, the description of the lattice of subvarieties of a regularized variety \cite{Dudek}, and the construction of free algebras \cite{Romanowskafree}. Furthermore, the work of Pastijn \cite{Pastijn} has provided crucial insights into the preservation of the Congruence Extension Property (CEP) and the Amalgamation Property (AP) through the construction of P\l onka sums.
However, a closer look to the existing literature reveals a significant restriction: despite the construction of the P\l onka sum has been extended to algebras (possibly) containing constants \cite{Plonka1984nullary}, the vast majority of these results have been proven exclusively for plural algebraic languages containing no constants. 
The aim of the present paper is to extend the structural theory of P\l onka sums and regular(ized) varieties in several new directions. The main contributions of this work are fourfold and, in particular, focus on:
\begin{enumerate}
    \item Extension to algebraic types possibly containing constants. We generalize the known results concerning subdirectly irreducible algebras, the lattice of subvarieties, and the description of free algebras to include algebraic types that may contain constants. 

\item Splittings in the lattice of subvarieties. We present entirely novel results regarding splitting pairs in the lattice of subvarieties of a regularized variety $R(\V)$ (for $\V$ being strongly irregular). Despite the intrisic value of this investigation for $R(\V)$, we also show that this has an impact also on splittings in the lattice of subvarieties of $\V$.

\item Description of congruences. We provide a complete description of the congruences of a P\l onka sum, solving one of the open problem listen in \cite{Bonziobook}. 

\item Preservation of (some) categorical properties. 
We establish that the properties of epimorphisms surjectivity and monomorphisms injectivity are preserved by the construction of the P\l onka sums, hence are transferred from a (strongly irregular) variety $\V$ to its regularization $R(\V)$. 
\end{enumerate}

The paper is organized as follows. In Section 2, we collect the necessary preliminaries concerning P\l onka sums and regular(ized) varieties, with a specific focus on the adjustments required when the algebraic type includes constants. Section 3 is devoted to the study of the lattice of subvarieties and the characterization of subdirectly irreducible algebras for regularized varieties. Here, we extend the classical results to the general case of algebras (possibly) containing nullary operations. In Section 4, we investigate splittings in the lattice of subvarieties of a regularized variety. Section 5 provides a comprehensive description of free algebras in regularized varieties. In particular, we show how the characterization by Romanowska \cite{Romanowskafree} can be extended to the case of (free) algebras that accounts for the presence of constants in the signature. In Section 6, we address the problem of characterizing the congruences of a P\l onka sum, by describing how a congruence of a P\l onka sums is built from the congruences of the summands and the underlying semilattice. As an application of our main result, we also include two subsections on generated congruences and factor congruences. Finally, in Section 7, we show that the properties of epimorphisms surjectivity and monomorphisms injectivity are preserved by the contruction of the P\l onka sum, hence transferred from a strongly irregular variety $\V$ to its regularization $R(\V)$.

\section{Preliminaries}\label{Sec: Preliminari}

We assume the reader has some familiarity with universal algebra (standard references are \cite{BergmanLibro,BuSa00}). 


Throughout the paper, we will assume  algebras to be defined in an arbitrary \emph{plural} (algebraic) language of type $\tau$ (thus containing at least one operation symbol of arity strictly greater than $1$) and possibly containing constant symbols, with no explicit mention to $ \tau$ (if not needed); we will explicitly mention the cases where the type is assumed not to contain constant symbols.   

A \emph{semilattice direct system} of similar algebras is a triple $\mathbb{A} = (\{\A_i\}_{i\in I}, (I, \le), \{p_{ij}\}_{i\le j})$ consisting of a (join) semilattice $\I = (I,\le)$ with least element $i_{0}$\footnote{The existence of a least element is necessary when working with types containing constants and can be relaxed in absence of constants.} (and with join $\vee$), a family of similar algebras $\{\A_i\}_{i\in I}$ with (pairwise) disjoint universes and a family of homomorphisms $\{p_{ij}\}_{i\le j}$, one for each $i\leq j\in I$, such that $p_{ii}$ is the identity and $p_{ik} = p_{jk}\circ p_{ij}$, whenever $i\le j \le k$. 

Given two semilattice direct systems (of algebras) $\mathbb{A} = (\{\A_i\}_{i\in I}, (I, \le), \{p_{ij}\}_{i\le j})$ and $\mathbb{B} = (\{\B_j\}_{j\in J}, (J, \le), \{q_{kl}\}_{k\le l})$, a morphism from $\mathbb{A}$ to $\mathbb{B}$ is a pair $\langle \varphi, \{f_i\}_{i\in I} \rangle$ such that $ \varphi\colon I\rightarrow J $ is a semilattice homomorphism and $ f_i\colon A_{i}\rightarrow B_{\varphi(i)} $ is a homomorphism for each $i\in I$, making the following diagram commutative for each $ i, j\in I $, $ i\leq^{I} j $.

\begin{figure}[h]
\begin{center}
\begin{tikzpicture}[scale=0.8]
\draw (-4,0) node {$ B_{\varphi(i)} $};
	\draw (4,0) node {$ B_{\varphi(j)} $};
	
	\draw [line width=0.8pt, ->] (-3.5,0) -- (3.4,0);
	\draw (0,-0.4) node {\begin{footnotesize}$q_{\varphi(i)\varphi(j)}$\end{footnotesize}};

	\draw [line width=0.8pt, <-] (3.3,3) -- (-3.3,3);
	\draw (0, 3.3) node {\begin{footnotesize}$p_{ij}$\end{footnotesize}};

	\draw (-4,3) node {$A_i$}; 
	
	\draw (4,3) node {$A_{j}$};
		
	\draw [line width=0.8pt, ->] (-4,2.6) -- (-4,0.4);
	\draw (-4.6,1.7) node {\begin{footnotesize}$f_i$\end{footnotesize}};
	\draw (4.6,1.7) node {\begin{footnotesize}$f_{j}$\end{footnotesize}};
	\draw [line width=0.8pt, <-] (4,0.4) -- (4,2.6);
	
\end{tikzpicture}

\end{center}
\end{figure}

With the above notion of morphism, semilattice direct systems form a category (see e.g. \cite{Bonziobook}). 

Given any semilattice direct system of algebras $\mathbb{A}= (\{\A_i\}_{i\in I}, (I, \le), \{p_{ij}\}_{i\le j})$, it is possible to define a new algebra $\A$ with universe the (disjoint) union $A = \bigcup_{i\in I} A_{i}$, whose $n$-ary operations (with $n\geq 1$) are defined as $g^{\A}(a_{1},\dots a_{n}):= g^{\A_{j}}(p_{i_{1}j}(a_{1}), \dots, p_{i_{n}j}(a_n))$, where $a_{1}\in A_{i_1}, \dots, a_{n}\in A_{i_n}$ and $j = i_{1}\vee\dots\vee i_{n}$; in case the type contains constants then each one of them is identified with their interpretation in $\A_{i_{0}}$, that is $c^{\A}:= c^{\A_{i_0}}$ (for every constant symbol $c\in \tau$). The new algebra $\A$, which we will sometimes indicate by $\PLA$, is called the \emph{P\l onka sum} of the algebras in the system $\mathbb{A}$, in honor of J. P\l onka who, inspired by the structure of strong semilattices of groups, firstly introduced this construction in Universal algebra \cite{Plo67, Plo67a, Plo68, Plonka1984nullary, RomanowskaPlonka92}.

Understanding whether a given arbitrary algebra $\A$ can be decomposed into a P\l onka sum over a certain semilattice direct system (of algebras) is connected with the existence of a special operation on $\A$, known as \emph{partition function}. In details, a function $\odot\colon A^2\to A$ is a \emph{partition function} in $\A$ if the following conditions are satisfied for all $a,b,c\in A$, $ a_1 , ..., a_n\in A $ and for any operation $g$ of arity $n\geqslant 1$ and any costant operation $c$.
\begin{enumerate}
\item[(PF1)] $a\odot a = a$,
\item[(PF2)] $a\odot (b\odot c) = (a\odot b) \odot c $,
\item[(PF3)] $a\odot (b\odot c) = a\odot (c\odot b)$,
\item[(PF4)] $g(a_1,\dots,a_n)\odot b = g(a_1\odot b,\dots, a_n\odot b)$,
\item[(PF5)] $b\odot g(a_1,\dots,a_n) = b\odot a_{1}\odot_{\dots}\odot a_n$, 
\item[(PF6)] $a\odot c^{\A} = a$.
\end{enumerate}

The connection between P\l onka sums and partition functions is stated in the following. 

\begin{theorem}[P\l onka decomposition theorem]\label{th: Plonka dec. theorem}
    Let $\A$ be an algebra of type $\tau$ with a partition function $\odot$. The following conditions hold: 
\begin{enumerate}
\item $A$ can be partitioned into $\{ A_{i} \}_{i \in I }$ where any two elements $a, b \in A$ belong to the same component $A_{i}$ exactly when
\[
a= a\odot b \text{ and }b = b\odot a.
\]
\item The relation $\leq$ on $I$ given by
\[
i \leq j \text{ \; if and only if there exist }a \in A_{i}, b \in A_{j} \text{ s.t. } b\odot a =b
\]
is a partial order and $\langle I, \leq \rangle$ is a semilattice. 
\item For all $i,j\in I$ such that $i\leq j$ and $b \in A_{j}$, the map $p_{ij} \colon A_{i}\to A_{j}$, defined by $x\mapsto p_{ij}:=(x)= x\odot b$ is a homomorphism. 
\item $\mathbb{A} = \langle \{ \A_{i} \}_{i \in I}, \langle I, \leq \rangle, \{ p_{ij} \! : \! i \leq j \}\rangle$ is a direct system of algebras such that $\PL(\mathbb{A})=\A$.
\end{enumerate}
\end{theorem}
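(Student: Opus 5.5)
The plan is to follow P\l onka's classical argument and to isolate the one place where constants intervene, namely (PF6). First I record the derived identities that the proof rests on. From (PF1)–(PF3), $\odot$ is a left-normal band: $a\odot b\odot c = a\odot c\odot b$. From this, $a\odot b\odot a = a\odot a\odot b = a\odot b$. Next I define $a\sim b$ iff $a\odot b=a$ and $b\odot a=b$. Reflexivity comes from (PF1) and symmetry is built in. For transitivity, if $a\sim b\sim c$ then
\[
a\odot c=(a\odot b)\odot c=a\odot(b\odot c)=a\odot b=a,
\]
and symmetrically $c\odot a=c$. This gives item (1), with $I=A/{\sim}$.

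For item (2), I show that $i\le j$ does not depend on the chosen representatives. Suppose $b\odot a=b$ and take $a'\sim a$, $b'\sim b$. Then
\[
b'\odot a' = b'\odot b\odot a' = b'\odot b\odot a\odot a' = b'\odot b\odot a = b'\odot b = b',
\]
using associativity and left normality. The relation is reflexive, and it is transitive by the same computation. For antisymmetry, $b\odot a=b$ and $a\odot b=a$ give $a\sim b$. I then claim that the join of $[a]$ and $[b]$ is $[a\odot b]$.
\begin{itemize}
\item It is an upper bound: $(a\odot b)\odot a = a\odot b$ and $(a\odot b)\odot b = a\odot b$.
\item It is least: if $c\odot a=c$ and $c\odot b=c$, then $c\odot(a\odot b)=c$.
\end{itemize}
So $\langle I,\le\rangle$ is a join semilattice. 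Since the preliminaries require a least element when constants are present, I use (PF6) here. For any constant $c$, $a\odot c^{\A}=a$ for all $a$, so the class of $c^{\A}$ is below every class. Moreover, distinct constants are $\sim$-equivalent, so they all lie in the least component $A_{i_0}$.

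For item (3), I first check that $p_{ij}$ is well defined. The element $x\odot b$ lies in $A_j$, since $x\odot b\sim b$ follows from $b\odot x=b$ via the band identities. It does not depend on $b\in A_j$: if $b\sim b'$, then $x\odot b' = x\odot b\odot b' = x\odot b'\odot b$, and the equality $x\odot b = x\odot b'$ follows from $b\odot b'=b$ and $b'\odot b = b'$ together with left normality. Next I show that each $A_i$ is closed under the operations of arity $n\ge 1$ (and under constants when $i=i_0$). For $a_1,\dots,a_n\in A_i$, (PF5) gives $b\odot g(\bar a)=b\odot a_1\odot\dots\odot a_n$, and (PF4) gives $g(\bar a)\odot b = g(a_1\odot b,\dots)$. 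Together these show $g(\bar a)\sim a_1$. The homomorphism property of $p_{ij}$ is then exactly (PF4), while for constants (PF6) gives $c\odot b$, which must equal $c^{\A_j}$. This is the delicate point.

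I expect the main obstacle to be item (4) with constants. The P\l onka sum interprets $c$ as $c^{\A_{i_0}}$, whereas a summand $\A_j$ must have its own constant $c^{\A_j}:=c^{\A}\odot b$ for $b\in A_j$ in order to be an algebra of the type. I would define it that way, and then $p_{i_0 j}$ preserves it by construction. For the operations of arity $n\ge 1$, I must verify
\[
g^{\A}(a_1,\dots,a_n)=g^{\A_j}(a_1\odot b,\dots,a_n\odot b)
\]
with $b$ any element of $A_j$, $j=\bigvee i_k$; for instance $b=a_1\odot\cdots\odot a_n$. By (PF4) the right side is $g(\bar a)\odot b$. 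By (PF5) and the band laws, $g(\bar a)\sim b$, so $g(\bar a)\odot b=g(\bar a)$. Finally, the direct system laws $p_{ii}=\mathrm{id}$ and $p_{jk}\circ p_{ij}=p_{ik}$ follow from $x\odot b\odot c=x\odot c$ when $b\odot c=b$.
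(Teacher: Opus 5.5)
The paper gives no proof of this theorem: it is recalled from P\l onka's work (cf.\ \cite{Plonka1984nullary} for the case with constants). Your proposal is the standard classical argument and it is correct. Your convention $c^{\A_j}:=c^{\A}\odot b$ for the constants of the non-least fibres is also exactly the one the paper itself adopts in Section~\ref{sec: algebre libere}.

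There is one orientation slip at the end. For $b\in A_j$ and $c\in A_k$ with $j\le k$, the available hypothesis is $c\odot b=c$, not $b\odot c=b$ as you wrote; the latter only yields $x\odot b\odot c=x\odot b$. With the correct hypothesis, left normality gives $x\odot b\odot c=x\odot c\odot b=x\odot c$, hence $p_{jk}\circ p_{ij}=p_{ik}$. Applying this same computation with $x$ the value of a constant symbol gives $p_{ij}\circ p_{i_0 i}=p_{i_0 j}$ on constants. That is precisely what shows $p_{ij}$ preserves constants when $i\neq i_0$, a case you leave implicit.
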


We recall that, in case $\A$ is an algebra in a type not containing constant symbols, Theorem \ref{th: Plonka dec. theorem} can be reinforced adding that each algebra $\A_{i}$ (in the system $\mathbb{A}$) is a subalgebra of $\A=\PLA$ (see \cite{Plo67}). Moreover, \cite[Th. II]{Plo67} proves also that: (i) every representation of an algebra $\A$ as a P\l onka sum $\PLA$ is obtained by starting with a suitable partition function $\odot$ on $\A$; (ii): there is a bijective correspondence between partition functions (over a given algebra) and P\l onka sum representations. 

We will sometimes refer to the algebras $\A_{i}$ and the homomorphisms $\{p_{ij}: i\leq j\}$ in a semilattice direct systems (or a P\l onka sum) are \emph{fibers} and \emph{transition morphisms}, respectively.

Recall that a \emph{variety} is a class of algebras closed under the formation of subalgebras, homomorphic images and products; via the celebrated Birkhoff theorem, this is equivalent to be an equational class.   

An identity $\varphi \approx \psi$ (in an arbitrary algebraic language) is called \emph{regular} provided that the sets of variables occurring in $\varphi$ and $\psi$, respectively, are equal, that is $Var(\varphi)= Var(\psi)$. An identity which is not regular will be called \emph{irregular}. The construction of the (non-trivial) P\l onka sum $\PLA$ (over a semilattice direct system of algebras $\mathbb{A}$) satisfies all the regular identities holding in any algebra in $\mathbb{A}$ and falsifies any other identity (see \cite[Th. I]{Plo67}). A variety $\V$ is called \emph{regular} if it does satisfy regular identities only; a variety which is not regular will be called irregular. Examples of regular varieties include semigroups, monoids and semilattices (and many more as we shall see). Among the irregular varieties, we will particular focus on the ones defined in the following.

\begin{definition}
A variety $\V$ (of type $\tau$) is \emph{strongly irregular} if there exist a term-definable ($\tau$) formula $f(x,y)$ (where $x,y$ actually occur) such that $\V\models f(x,y)\approx x$.     
\end{definition}

Examples of strongly irregular varieties abound (while it is difficult to provide examples of an irregular variety which is not strongly irregular) and include the varieties of groups, rings, lattices, any variety with a lattice (or a group) reduct and also congruence modular varieties (see \cite{BergamRomanowskaQuasivarieties}). 

Given a strongly irregular variety $\V$, we will denote by $R(\V)$ the \emph{regularization} of $\V$, that is the
variety satisfying all and only the regular identities holding in $\V$. Regularizations $R(\V)$ are often called regularized varieties. Interestingly, regularized varieties and P\l onka sums are interconnected. 

\begin{theorem}\cite[Theorem 7.1]{RomanowskaPlonka92}\label{th: regularization of st.irregular varieties}
Let $\V$ be a strongly irregular variety. Then the following facts are equivalent:
\begin{enumerate}
\item $\A\in R(\V)$; 
\item $\A=\PLA$, with $\mathbb{A}$ a semilattice direct system of algebras in $\V$.
\end{enumerate}

\end{theorem}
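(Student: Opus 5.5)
We prove the two implications separately, using the P\l onka decomposition theorem (Theorem \ref{th: Plonka dec. theorem}) for (1)$\Rightarrow$(2) and P\l onka's result that a P\l onka sum satisfies all regular identities valid in its fibers for (2)$\Rightarrow$(1).

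\emph{(2)$\Rightarrow$(1).} Let $\A=\PLA$ with every fiber $\A_i\in\V$. Each fiber then satisfies every regular identity true in $\V$. By \cite[Th. I]{Plo67}, in the form valid with constants and a least element $i_0$, $\A$ satisfies those same regular identities, so $\A\in R(\V)$. With constants, the point to check is that a term $t(x_1,\dots,x_n)$ evaluated at $a_k\in A_{i_k}$ equals $t^{\A_j}(p_{i_kj}(a_k))$ with $j=i_1\vee\dots\vee i_n$. This holds because constants live in $\A_{i_0}$ and $i_0\vee j=j$. The identity then transfers since both sides of a regular identity involve the same variables, hence land in the same fiber $\A_j$. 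Ground terms with no variables land in $\A_{i_0}$, so regular identities between them also transfer.

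\emph{(1)$\Rightarrow$(2).} Let $\A\in R(\V)$ and let $f(x,y)$ witness strong irregularity, so $\V\models f(x,y)\approx x$. Define $a\odot b:=f^{\A}(a,b)$. We verify (PF1)--(PF6). For each axiom, both sides are regular terms, and the identity holds in $\V$ because $f$ acts there as the first projection. For example, (PF4) reads $f(g(\bar x),y)\approx g(f(x_1,y),\dots,f(x_n,y))$, and both sides reduce to $g(\bar x)$ in $\V$. (PF5) reads $f(y,g(\bar x))\approx f(\dots f(f(y,x_1),x_2)\dots,x_n)$, and both sides reduce to $y$. (PF6) reads $f(x,c)\approx x$, which is regular since $c$ contributes no variables. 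Hence all six hold in $R(\V)$, so $\odot$ is a partition function. Theorem \ref{th: Plonka dec. theorem} then gives $\A=\PLA$ for the induced system, and it remains to show that each fiber $\A_i$ lies in $\V$.

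This last step is the main obstacle, particularly with constants, since a fiber need not be a subalgebra of $\A$. Its constants are $c^{\A_i}=p_{i_0i}(c^{\A})=c\odot b$ for any $b\in A_i$. Take an identity $s\approx t$ of $\V$. Pick a variable $z$ not occurring in it and form $s'=f(s,z)\odot\ldots$; more precisely, set $s'=f(f(s,z),t)$ and $t'=f(f(t,z),s)$. These have the same variables, namely $Var(s)\cup Var(t)\cup\{z\}$, and both equal $s=t$ in $\V$, so $s'\approx t'$ holds in $R(\V)$.

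Now evaluate the variables at elements of $A_i$, with $z$ set to some $b\in A_i$. In fiber terms, $f^{\A}(u,v)$ for $u,v\in A_i$ is $f^{\A_i}(u,v)$, and the multiplication by $z$ forces every constant to be read as $c\odot b$, its $\A_i$-interpretation. Using (PF4) we push $\odot b$ inside. The two sides then become $s^{\A_i}$ and $t^{\A_i}$ multiplied within $\A_i$, where $a\odot a'=a$ holds for $a,a'$ in the same fiber. This yields $s^{\A_i}=t^{\A_i}$, so $\A_i\in\V$. I expect the bookkeeping of constants under (PF4) to be the delicate part of the argument.
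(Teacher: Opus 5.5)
The paper gives no proof of this statement; it imports it from Romanowska and P\l onka, so there is no internal argument to compare against. Your proof is correct. It is essentially the standard argument behind the cited result, adapted to constants, and it has two main steps.
\begin{itemize}
\item The strong-irregularity term $f$ yields a partition function, because (PF1)--(PF6) are regular identities in which $f$ collapses to the first projection in $\V$.
\item The padded regular identity $f(f(s,z),t)\approx f(f(t,z),s)$ transfers each identity of $\V$ to the fibers. Pushing $\odot b$ inside with (PF4) converts the global constants $c^{\A}$ into the fiber constants $c^{\A}\odot b$. This is the same convention the paper itself adopts in Section 5.
\end{itemize}

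One detail needs tightening. After pushing $\odot b$ inside, the second factor in the evaluation of $f(f(s,z),t)$ is $t^{\A}(\bar a)$, not $t^{\A_i}(\bar a)$. It lies in $A_i$ only when $t$ contains a variable. If $t$ is a ground term, $t^{\A}$ lies in $A_{i_0}$, so ``multiplied within $\A_i$'' is not literally true in that case.

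The conclusion still holds, because $u\odot t^{\A}=u$ for every ground term $t$. This follows from (PF5) and (PF6), or from $u\odot v=p_{i,i\vee j}(u)$ in a P\l onka sum. It is cleaner, though, to pad with $f(t,z)$ instead of $t$, that is, to use $f(f(s,z),f(t,z))\approx f(f(t,z),f(s,z))$. Then every factor visibly lands in $A_i$, and your ``same fiber'' argument applies verbatim.
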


The content of the previous theorem can be generalized to categories by saying that the (algebraic) category $R(\V)$ is equivalent to that of semilattice directed systems of algebras in $\V$ (see \cite[Ch. 3]{Bonziobook}).

Some regularized varieties have been extensively studied: some of them are recalled in the following. 
\begin{example}\label{ex: Clifford semigroups}
The variety of \emph{Clifford semigroups} is the regularization of the variety of groups (it follows from Theorem \ref{th: regularization of st.irregular varieties} and the well known structural characterization of Clifford semigroups as strong semilattices of groups, see e.g. \cite{CliffordPrestonBook, howie1995fundamentals}). In Clifford semigroups, the term-function $f(x,y):= x\cdot y\cdot y^{-1}$ plays the role of a partition function. 
\end{example}

\begin{example}\label{ex: IBSL}
The regularization of the variety of Boolean algebras (see e.g. \cite{Plonka1984nullary}) is also known as \emph{involutive bisemilattices} and has been extensively investigated in connection with (weak) Kleene logics (see \cite{Bonziobook, Bonzio16SL}). In involutive bisemilattices, the left-hand side of the absorption law, that is $f(x,y):=x \vee (x \wedge y)$ (or $f(x,y):= x \wedge (x \vee y)$), plays the role of a partition function.    
\end{example}

\begin{example}\label{ex: dual weak braces}
A skew brace $\G = (G, \cdot , \circ, ^{-1}, ^{'}, e )$ is an algebra of type $(2,2,1,1,0)$ such that the (term-)reducts $(G, \cdot, ^{-1}, e)$ and $(G, \circ, ^{'}, e)$ are groups and satisfy the (regular) identity $x \circ (y\cdot  z) \approx (x \circ y) \cdot x^{-1} \cdot (x \circ z)$. \\
\noindent 
Clearly, skew braces form a strongly irregular variety (having two group reducts). Its regularization has been introduced and studied recently in \cite{CatinoMediterranean} (see also \cite{Stefanelli25}) under the name of \emph{dual weak braces}, which are (essentially) defined as algebras $\mathbf{S} = (S, \cdot , \circ, ^{-1}, ^{'})$ of type $(2,2,1,1)$, where $(S, \cdot, ^{-1})$ and $(S, \circ, ^{'})$ are Clifford semigroups\footnote{The definition of dual weak braces is actually simpler and require only (the reduct) $(S, \cdot, ^{-1})$ to be Clifford and $(S, \circ, ^{'})$ only an inverse semigroup to get, in presence of the other additional identities, that also the latter is Clifford.} plus the identity defining skew braces paired with (the additional identity) $x\cdot x^{-1}\approx x\circ x'$. In dual weak braces, the terms $x\cdot y\cdot y^{-1}$ and $x\circ y\circ y'$ coincide, yielding the same partition function (see Example \ref{ex: Clifford semigroups}).   
\end{example}

\section{Subvarieties and subdirectly irreducible members of a regular(ized) variety} 

In this section, we show that the description of the lattice of subvarieties of a regularized variety (the regularization of a strongly irregular variety) provided in \cite{Dudek} applies also for varieties defined over languages contains constant symbols (see Theorem \ref{theorem: L}). In order to prove the main result, we first need to introduce a semilattice-style decomposition, introduced in \cite{Plonka1969} and more general than P\l onka sums, holding for the regularizations of irregular -- but not in general strongly irregular -- varieties (see Theorem \ref{theorem: irregulrep}). 

\begin{definition}
Let $\tau$ be a type of algebras with constants and let 
$\widetilde{\tau} $ denotes the set of operation symbols with non-null arity. The \emph{full non-nullary reduct} of a $\A$ is the algebra $\widetilde{\A}$ with same universe $A$ of $\A$ and operations defined by $ f^{\widetilde{\A}}=f^{\A}$, for every $f\in \widetilde{\tau}$. 
\end{definition}

\begin{definition}\label{def: semilattice of subalgebras}
   An algebra $\mathbf{A}$ is a \emph{semilattice of subalgebras} $\{\A_{i}\}_{i\in I}$ 
    if $(I,\le)$ is a semilattice with least element $i_{0}$ and $\{\mathbf{A}_i\}_{i\in I}$ is a family of pairwise disjoint algebras such that: 
    
    \begin{enumerate}
        \item $\displaystyle A=\bigcup_{i\in I}{A_i}$;
        \item $\mathbf{A}_{i_0} \le \mathbf{A}$ and 
$ \mathbf{A}_i \le \widetilde{\mathbf{A}}$, for every $ i\in I\setminus \{i_0\}$;
        \item for every operation $f$ or arity $n\geq 1$, $ f^{\mathbf{A}}(a_1,...,a_n)\in  \mathbf{A}_{i_1 \vee... \vee i_n} $, with $a_1\in A_{i_1},...,a_n \in A_{i_n} $ and for every constants $c$ in the type, $c^{\A}\in A_{i_{0}}$.
    \end{enumerate}
\end{definition}

Observe that every algebra which decomposes as a P\l onka sum (see Theorem \ref{th: Plonka dec. theorem}) is also a semilattice of subalgebras, while the converse does not hold in general. Nevertheless, semilattices of subalgebras satisfy some properties common with P\l onka sums. 

We will say that a semilattice of subalgebras $\{\A_{i}\}_{i\in I}$ is \emph{non-trivial} if the semilattice $(I,\leq)$ is non-trivial, i.e. contains more than one element.  
We omit the proofs of the following results as they work exactly as in the case of languages not containing constants.


\begin{lemma}\emph{\cite[Lemma 1]{Dudek}}\label{lemma: int}
   Let $\A$ be a semilattice of subalgebras $\{\A_{i}\}_{i\in I}$ and 
   $t=t(x_1,...,x_n)$ an $n$-ary term (in the algebraic language of $\A$).
   Then for every $ a_1 \in A_{i_1},...,a_n\in A_{i_n} $, $ t^{\mathbf{A}}(a_1,...,a_n) \in \mathbf{A}_i$, with $i = i_{1}\vee\dots\vee i_{n}$. 
\end{lemma}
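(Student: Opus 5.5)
We argue by structural induction on the term $t$, proving the slightly stronger statement that for every term $t(x_1,\dots,x_n)$ (with the displayed variables including all variables occurring in $t$, possibly more) and every $a_1\in A_{i_1},\dots,a_n\in A_{i_n}$, the element $t^{\A}(a_1,\dots,a_n)$ lies in $A_j$, where $j$ is the join of the indices $i_k$ for those $x_k$ that actually occur in $t$, and $j=i_0$ if no variable occurs. Since the $A_i$ are pairwise disjoint and cover $A$, membership in a unique fibre makes sense. The statement of the lemma follows because, reading $t=t(x_1,\dots,x_n)$ as a term in which each $x_k$ occurs (the standard convention, and the only one under which the lemma can hold when some $i_k$ is not below the others), the join $j$ is exactly $i_1\vee\dots\vee i_n$.

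\textbf{Base cases.} If $t$ is a variable $x_k$, then $t^{\A}(a_1,\dots,a_n)=a_k\in A_{i_k}$, as required. If $t$ is a constant symbol $c$, then condition (3) of Definition \ref{def: semilattice of subalgebras} gives $c^{\A}\in A_{i_0}$, and the index associated with the empty set of variables is $i_0$, the least element.

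\textbf{Inductive step.} Let $t=f(t_1,\dots,t_m)$ with $f$ of arity $m\geq1$. By the inductive hypothesis, each $b_l:=t_l^{\A}(a_1,\dots,a_n)$ lies in $A_{j_l}$, where $j_l$ is the join of the indices of the variables occurring in $t_l$ (or $i_0$). By condition (3), $f^{\A}(b_1,\dots,b_m)\in A_{j_1\vee\dots\vee j_m}$. Since the set of variables of $t$ is the union of the sets of variables of the $t_l$, and $i_0$ is neutral for the join, we have $j_1\vee\dots\vee j_m=j$.

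The only delicate point is the bookkeeping for constants: subterms without variables must land in $A_{i_0}$, and $i_0$ must be absorbed by the joins. This is exactly what the least-element assumption on $I$ and the clause $c^{\A}\in A_{i_0}$ provide. Everything else is identical to the constant-free case.
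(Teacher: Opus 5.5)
Your proof is correct and is essentially the argument the paper intends. The paper omits the proof, deferring to the structural induction of Dudek's Lemma~1; the only new ingredients are that constants land in $A_{i_0}$ and that $i_0$ is neutral for joins, which is exactly how you handle them. Your strengthening to the join over the variables actually occurring in $t$ (with $i_0$ for ground terms) is the right way to make the induction go through, and you rightly note that the lemma presupposes that every $x_k$ occurs in $t$.
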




\begin{lemma}\label{lemma: non-trivial semilattice}
A semilattice of subalgebras is non-trivial if and only if it satisfies only regular identities (in a fixed set of variables).
\end{lemma}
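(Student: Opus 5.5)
The plan is to prove the two implications separately. The forward implication rests on Lemma~\ref{lemma: int}, i.e.\ on the fact that the value of a term is located in the fiber indexed by the join of the indices of its arguments. The converse is a direct inspection of the case $I=\{i_0\}$.

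\emph{Non-trivial $\Rightarrow$ only regular identities.} Let $\A$ be a non-trivial semilattice of subalgebras $\{\A_i\}_{i\in I}$, and let $s\approx t$ be an irregular identity in the fixed set of variables. Without loss of generality some variable $x$ occurs in $s$ but not in $t$. Since $I$ is non-trivial, I choose $k\in I$ with $k\neq i_0$, an element $a\in A_k$ and an element $b\in A_{i_0}$. Here $A_{i_0}$ is non-empty: it contains the interpretations of the constants if there are any, and in any case it is the universe of a member of the family. I consider the assignment $x\mapsto a$ and $y\mapsto b$ for every other variable $y$. By Lemma~\ref{lemma: int}, $s^{\A}$ lands in $A_{k\vee i_0}=A_k$, because $i_0$ is the least element of $I$. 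The term $t$ does not contain $x$, so all its arguments lie in $A_{i_0}$ and $t^{\A}$ lands in $A_{i_0}$. If $t$ is a ground term the same conclusion holds: by clause (3) of Definition~\ref{def: semilattice of subalgebras} the constants lie in $A_{i_0}$, and $\A_{i_0}\le\A$ is closed under the operations. Since $A_k\cap A_{i_0}=\emptyset$, the two values differ, so $\A\not\models s\approx t$. Hence $\A$ satisfies only regular identities. This step is routine; the only care needed is the ground-term and constant case, which is exactly where clause (2), $\A_{i_0}\le\A$ as a full subalgebra, is used.

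\emph{Only regular identities $\Rightarrow$ non-trivial.} I argue by contraposition. Suppose $I=\{i_0\}$. Then $A=A_{i_0}$ and $\A_{i_0}\le \A$, so $\A=\A_{i_0}$, and the identities of $\A$ are exactly those of the single fiber. To conclude I must exhibit an irregular identity holding in $\A_{i_0}$, and this is the main obstacle. For an arbitrary algebra no such identity need exist: a one-fiber decomposition of a semilattice is trivial, yet a semilattice satisfies only regular identities. So the converse cannot be derived from the definition alone, and it needs the standing hypothesis of the context in which the lemma is used (\cite{Dudek}), namely that the fibers belong to a fixed irregular variety $\V$. I would therefore first state explicitly that the lemma is read relative to fibers in $\V$. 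Under that hypothesis, $\A_{i_0}\in\V$ satisfies an irregular identity of $\V$, so $\A$ does too, and the contrapositive follows.

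Under any reading, the converse therefore reduces to the observation that a trivial decomposition makes $\A$ equal to its unique fiber, and that all the content of the equivalence lies in the forward direction. I stress, however, that without the hypothesis on $\V$ the stated biconditional is not proved by this argument: the semilattice example is a genuine counterexample to the converse as literally worded.
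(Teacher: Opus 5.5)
Your proof of the forward implication is correct, and it is the standard argument the paper has in mind when it omits the proof as identical to the constant-free case. You refute an irregular identity by sending the offending variable into some $A_k$ with $k\neq i_0$ and all other variables into $A_{i_0}$. The only points specific to constants are that $i_0$ is least and that $\A_{i_0}$ is a full subalgebra containing every $c^{\A}$, and you handle both, including ground terms.

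Your objection to the converse is also right. Non-triviality refers to the given index semilattice, so the one-fibre decomposition of a $\tau$-semilattice is a counterexample. You should, however, take that semilattice \emph{non-trivial}, since the one-element semilattice satisfies every identity.

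An existential reading (``$\A$ admits some non-trivial decomposition'') does not save the statement either. The semigroup $(\mathbb{Z},+)$ satisfies only regular identities. Yet every semigroup homomorphism from it onto a semilattice has trivial image, because homomorphic images of groups are groups. Since a non-trivial decomposition would give a surjective homomorphism onto $(I,\vee)$, $(\mathbb{Z},+)$ admits no non-trivial decomposition at all.

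So a hypothesis like yours is genuinely needed. In the presence of constants, state it as $\A_{i_0}\in\V$ for an irregular $\V$, since the other fibres are only algebras of the non-nullary reduct. This is exactly what Theorem~\ref{theorem: irregulrep} supplies.

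None of this affects the paper. The lemma is invoked only in the proof of Theorem~\ref{theorem: cover}, and only in the direction you proved: an algebra satisfying an irregular identity must have $|I|=1$.
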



\begin{theorem}\emph{\cite[Theorem 4]{Plonka1984nullary}}\label{theorem: irregulrep}
    Let $\V$ be
    an irregular variety (of type $\tau$) and $\mathbf{A} \in R(\mathcal{V})$. Then $\mathbf{A}$ is a semilattice of subalgebras. 
    In particular, if the type $\tau$ contains constants then $\mathbf{A}_{i_0} \in \mathcal{V}$. 
\end{theorem}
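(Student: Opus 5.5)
The natural route is the Płonka–Dudek construction of the semilattice decomposition from an irregular identity, adapted so that constants land in the least component. Since $\V$ is irregular, it satisfies an identity $\varphi\approx\psi$ with $Var(\varphi)\neq Var(\psi)$. After substitution we may assume some variable $y$ occurs in $\psi$ but not in $\varphi$.

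\textbf{Step 1: a binary term.} Identifying all variables other than $y$ with $x$ (if no such variable exists, $\varphi$ is ground; then identify instead with a fresh $x$, and also replace constants occurring in it), we obtain a binary term $f(x,y)$ such that $\V\models f(x,y)\approx f'(x)$ for a term $f'$ in which $y$ does not occur. The standard argument (Płonka 1969) then gives terms with the following property: the relation
\[ a\preceq b \iff f(b,a)=b \]
(or a suitable variant built from the regular consequences of $\varphi\approx\psi$) is a quasi-order on $A$ for every $\A\in R(\V)$. The components $A_i$ are its equivalence classes, and $I=A/{\sim}$ is a semilattice under $[a]\vee[b]=[g(a,b)]$ for a binary term $g$ in which both variables occur.

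\textbf{Step 2: verify the semilattice conditions.} All identities needed to check that $\preceq$ is reflexive and transitive, that $\vee$ is well defined, idempotent, commutative and associative, and that $f^{\A}(a_1,\dots,a_n)\sim a_1\vee\dots\vee a_n$, are regular consequences of identities of $\V$, hence hold in $\A$. Combined with Lemma \ref{lemma: int}, this yields that each $A_i$ is closed under the non-nullary operations, so $\A_i\le\widetilde{\A}$.

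\textbf{Step 3: constants.} For a constant $c$, the identity $t(x,c)\approx t(x)$-type relations show that $c^{\A}\preceq a$ for every $a$, since these identities are regular (their variable sets coincide, as constants contribute no variables). Hence all constants lie in a least class $A_{i_0}$, and $A_{i_0}$ is closed under every operation, including the nullary ones, so $\A_{i_0}\le\A$.

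\textbf{Step 4: $\A_{i_0}\in\V$.} Take any identity $p\approx q$ of $\V$. Choose a variable $z$ and set
\[ p'=f(p,\dots z\dots),\qquad q'=f(q,\dots z\dots), \]
arranged so that $p'$ and $q'$ have the same variables; for instance, append to each side, via $f$, the variables of the other side. Then $p'\approx q'$ is regular and holds in $\V$, hence in $\A$. On $A_{i_0}$ the appended variables can be evaluated at a constant $c$, which lies in the least component. The main obstacle is to ensure that this appending collapses back to $p$ (respectively $q$) there. This requires using $f(x,y)\approx f'(x)$ together with the fact that $f'$ restricted to $A_{i_0}$ must act as the identity on it. To get this I would first show $f'(x)\approx x$ modulo regular identities on elements of the least component, possibly replacing $f$ by an iterate so that the idempotency identity $f'(x)\approx x$ needed there becomes regular.

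If this fails in general, the fallback is to use the constant directly: form $p\approx q$ with all variables of $q$ not in $p$ substituted by $c$ on both sides. The resulting identity holds in $\V$, and one then checks via Lemma \ref{lemma: non-trivial semilattice} that $\A_{i_0}$ satisfies it.
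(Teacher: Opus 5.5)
\textbf{There is a genuine gap in Step 4, and the approach cannot work for merely irregular $\V$.}

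Collapsing the padded terms back to $p$ and $q$ on $A_{i_0}$ requires a term $F(x,y)$, with $y$ occurring, such that $F(a,b)=a$ on $A_{i_0}$. But a non-trivial semilattice of subalgebras falsifies every irregular identity. So for $\A\in\V\subseteq R(\V)$ the index semilattice is trivial and $A_{i_0}=A$. Your requirement would then give $\V\models F(x,y)\approx x$, i.e.\ strong irregularity, which the statement does not assume. (The identity $f'(x)\approx x$ is already regular; the obstruction is that it is false in $\V$.)

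Concretely, take $\tau=(2,0)$ with $\cdot$ and $c$, and let $\V$ be axiomatized by $x\cdot y\approx x\cdot z$. Your Step 1 gives $f(x,y)=x\cdot y$ and $f'(x)=x\cdot x$. In the algebra $\{c,d\}\in\V$ with $x\cdot y=c$ for all $x,y$, we get $f'(d)=c\neq d$, and every iterate of $f'$ is constant as well. The same algebra shows that your Step 1 relation $f(b,a)=b$ is not even reflexive ($d\cdot d\neq d$); the ``suitable variant'' is never specified.

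The fallback does not help either. Replacing variables by $c$ in the identity only yields substitution instances of $p\approx q$, and these do not imply $p\approx q$ on $A_{i_0}$. Lemma \ref{lemma: non-trivial semilattice} only decides whether $I$ is trivial; it says nothing about which identities $\A_{i_0}$ satisfies. Note also that the paper gives no proof of this result; it imports it from \cite{Plonka1984nullary}.

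\textbf{How to repair Steps 1--3.} Let $\sigma$ be the least congruence with $\A/\sigma\in\SL$, and let the $A_i$ be its classes. Closure of each class under the non-nullary operations is then automatic. The constants fall into the zero class, because $g(x,c)\approx x$ is regular.

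\textbf{What Step 4 is missing.} You need to substitute $c$ into \emph{elements}, not into identities. Call $a\in A$ \emph{null} if $t^{\A}(a,\vec b)=t^{\A}(c,\vec b)$ for all $\vec b$ and every term with $\V\models t(y,\vec x)\approx t(c,\vec x)$.
\begin{enumerate}
\item Constants are null.
\item $f(a_1,\dots,a_n)$ is null if every $a_k$ is: replace the $a_k$ by $c$ one at a time, then use the regular identity $t(f(c,\dots,c),\vec x)\approx t(c,\vec x)$.
\item $f(a_1,\dots,a_n)$ is null only if every $a_k$ is. Here $g$ is a binary term in which both variables occur. Chain the regular identities $t(y_k,\vec x)\approx t(g(c,y_k),\vec x)$ and $t(g(f(\vec y),y_k),\vec x)\approx t(f(\vec y),\vec x)$ with nullity of $f(\vec a)$, applied both to $t$ and to $t(g(z,y_k),\vec x)$.
\end{enumerate}
Hence the null elements form $\chi^{-1}(0)$ for a homomorphism $\chi\colon\A\to\mathbf{2}$. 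Since $\mathbf{2}\in\SL$, we have $\sigma\subseteq\ker\chi$, so the null elements contain $A_{i_0}=[c]_\sigma$.

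Finally, suppose $\V\models p\approx q$. The variables of $q$ not in $p$ are inessential in $q$ modulo $\V$, and symmetrically. So on $A_{i_0}$ both sides may be replaced by $p^\circ$ and $q^\circ$, with those variables set to $c$. The identity $p^\circ\approx q^\circ$ is regular and holds in $\V$, hence in $\A$.
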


Given a variety $\V$, we will denote by $\Lg(\V)$ the lattice of its subvarieties (by $L(\V)$ its universe) and by $\preccurlyeq $ the cover relation in $\Lg(\V)$.

\begin{remark}\label{rem: fatto ovvio su R}
For $\V$ a variety, we have that $\V\in\Lg(R(\V))$. Moreover, for $\V_{1},\V_{2}$ subvarieties of $\V$ such that $\V_{1}\subseteq\V_{2}$, it holds $R(\V_{1})\subseteq R(\V_{2})$. 
\end{remark}

The following result is proved essentially as  \cite[Theorem 1 \& Theorem 2]{Dudek} for the case without constants: we include it for the ease of readability. 

\begin{theorem}\label{theorem: cover}
Let $\V$ be a variety in a language (possibly) containing constant symbols. Then:   \begin{enumerate}
        \item if $\V$ is irregular, then for every $\mathcal{W} \in L(\mathcal{V})$, $ \mathcal{W} \preccurlyeq R(\mathcal{W})$ in $\Lg(R(\mathcal{V}))$;

        \item if $\mathcal{V}$ is strongly irregular, then the map $h$ defined by $\mathcal{W} \mapsto R(\mathcal{W})$ is an embedding of $\Lg(\mathcal{V})$ in $\Lg(R(\mathcal{V}))$.
    \end{enumerate}
    
\end{theorem}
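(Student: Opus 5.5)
For part (1), fix an irregular $\V$ and $\mathcal{W}\in L(\V)$. The case $\mathcal{W}$ trivial needs care, so I treat $\mathcal{W}$ non-trivial first. By Remark \ref{rem: fatto ovvio su R}, $\mathcal{W}\subseteq R(\mathcal{W})$. The inclusion is strict: the two-element semilattice direct system with fibers in $\mathcal{W}$ (e.g.\ $\A_{i_0}=\A_{i_1}=\A$ for some non-trivial $\A\in\mathcal{W}$, identity transition map) has a P\l onka sum lying in $R(\mathcal{W})$. This sum fails the irregular identities of $\mathcal{W}$ by Lemma \ref{lemma: non-trivial semilattice}, since it is a non-trivial semilattice of subalgebras. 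Note that $\mathcal{W}$ is irregular, since $\V$ is and $\mathcal{W}\subseteq\V$.

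For the cover property, let $\mathcal{U}$ satisfy $\mathcal{W}\subsetneq\mathcal{U}\subseteq R(\mathcal{W})$; I will show $\mathcal{U}=R(\mathcal{W})$. Pick $\B\in\mathcal{U}\setminus\mathcal{W}$. Since $\B\in R(\mathcal{W})$ and $\mathcal{W}$ is irregular, Theorem \ref{theorem: irregulrep} makes $\B$ a semilattice of subalgebras $\{\B_i\}_{i\in I}$. If $I$ were trivial, $\B=\B_{i_0}\in\mathcal{W}$ (using the "in particular" clause when there are constants, and the irregular identities holding otherwise), which is a contradiction. So $I$ is non-trivial, and by Lemma \ref{lemma: non-trivial semilattice} $\B$ satisfies only regular identities in a fixed set of variables. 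Hence every identity of $\mathcal{U}$ is regular and holds in $\mathcal{W}$, so $\mathrm{Id}(\mathcal{U})\subseteq\mathrm{Id}(R(\mathcal{W}))$ and therefore $R(\mathcal{W})\subseteq\mathcal{U}$.

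The main obstacle is making this rigorous with constants. Lemma \ref{lemma: non-trivial semilattice} must be applied to identities possibly involving constants, and the existence of a non-trivial algebra in $R(\mathcal{W})$ must be shown when constants force summands to share the $i_0$-fibre. I would handle the former by noting that constants live in $A_{i_0}$, the bottom, so they do not affect which fibre a term lands in (Lemma \ref{lemma: int}). For trivial $\mathcal{W}$, $R(\mathcal{W})$ is the variety of semilattice-like algebras satisfying all regular identities, and the same argument goes through.

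For part (2), assume $\V$ is strongly irregular, with $f(x,y)\approx x$. Clearly $h$ is well defined and monotone by Remark \ref{rem: fatto ovvio su R}. For injectivity, $R(\mathcal{W})\cap\V=\mathcal{W}$: algebras of $R(\mathcal{W})$ satisfying $f(x,y)\approx x$ satisfy an irregular identity, so by Theorem \ref{th: regularization of st.irregular varieties} (and Lemma \ref{lemma: non-trivial semilattice}) they are trivial P\l onka sums, i.e.\ members of $\mathcal{W}$. This gives injectivity and preservation of meets: $R(\mathcal{W}_1)\cap R(\mathcal{W}_2)$ is defined by the regular identities of both, and these are exactly the regular identities of $\mathcal{W}_1\cap\mathcal{W}_2$. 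For this I would use the P\l onka sum representation: an algebra in both is $\PLA$ with fibers in $\mathcal{W}_1$ and, by the intersection with $\V$ argument applied to its fibers (which are homomorphic images via the partition function $f$), also in $\mathcal{W}_2$. For joins, $R(\mathcal{W}_1\vee\mathcal{W}_2)\supseteq R(\mathcal{W}_1)\vee R(\mathcal{W}_2)$. Conversely, a regular identity of both $R(\mathcal{W}_j)$ holds in both $\mathcal{W}_j$, hence in $\mathcal{W}_1\vee\mathcal{W}_2$, so equality holds. The delicate point here is that fibers $\A_i$ with $i\neq i_0$ need not be subalgebras when constants are present. I would instead use that each fiber is the image of $\A$ under $x\mapsto f(x,b)$, which is a homomorphism of the non-nullary reduct that respects constants by (PF6).
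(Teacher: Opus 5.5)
Your proof is correct in substance. Part (1) is essentially the paper's argument: both rest on Theorem \ref{theorem: irregulrep} and Lemma \ref{lemma: non-trivial semilattice}. You pick $\B\in\mathcal{U}\setminus\W$ and conclude that $\mathcal{U}$ has only regular identities. This avoids the paper's regular/irregular case split and its appeal to a generating free algebra. You also exhibit a witness for $\W\neq R(\W)$, which the paper only asserts; that witness works with trivial fibres too, so the trivial-$\W$ case needs no separate treatment.

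In part (2), your injectivity argument via $R(\W)\cap\V=\W$ is a clean repackaging of the paper's. Your meet argument spells out the paper's one-line appeal to the P\l onka decomposition.

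The genuinely different step is joins. The paper uses the cover property from (1) together with the irregularity of $\V_1\vee\V_2$. You instead observe that $\mathrm{Id}(R(\W))$ is exactly the set of regular identities of $\W$, since consequences of regular identities are regular. Hence $\mathrm{Id}(R(\W_1)\vee R(\W_2))$ and $\mathrm{Id}(R(\W_1\vee\W_2))$ both equal the regular identities common to $\W_1$ and $\W_2$. This is purely syntactic and shows that $\W\mapsto R(\W)$ preserves joins for arbitrary varieties; strong irregularity is needed only for injectivity and meets.

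Two points need repair.

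\textbf{The meet step.} A fibre is not the image of $\A$ under $x\mapsto f(x,b)$. For $b\in A_i$ this map sends $A_j$ into $A_{i\vee j}$ via $p_{j,i\vee j}$, so its image is $\bigcup_{k\geq i}A_k$. As a self-map of $\A$ it also moves the constants. What is true is the following. Restrict the map to the subalgebra $\bigcup_{j\leq i}A_j$, which contains the constants because $i_0\leq i$. The restriction equals $\bigcup_{j\leq i}p_{ji}$ and is a surjective homomorphism onto $\A_i$. It sends $c^{\A}$ to $p_{i_0 i}(c^{\A})=c^{\A_i}$; this holds by the definition of the fibre constants, not by (PF6). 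Hence $\A_i\in HS(\A)\subseteq R(\W_2)$. Since also $\A_i\in\W_1\subseteq\V$, your identity $R(\W_2)\cap\V=\W_2$ gives $\A_i\in\W_2$, as you intended.

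\textbf{The case $|I|=1$ without constants in part (1).} What places $\B$ in $\W$ is the clause of P\l onka's decomposition theorem saying that the summands belong to $\W$. This is the same fact the paper uses when it writes $\mathbf{U}=\mathbf{U}_{i_0}\in\W$. That $\B$ satisfies some irregular identity does not by itself put $\B$ in $\W$; that inference would be circular at this point.
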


\begin{proof} 
(1) Let $\mathcal{V}$ be irregular, so there exists an irregular identity $p \approx q$ (over some infinite set of variables $X$ in the language of $\V$) such that $\mathcal{V} \models p\approx q$, therefore for any $ \mathcal{W} \in L(\mathcal{V})$, $ \mathcal{W} \models p \approx q$, that is every $\mathcal{W} \in L(\mathcal{V})$ is also irregular and, consequently $ \mathcal{W} \subsetneq R(\mathcal{V})$. Let $\W' \in L(R(\mathcal{V}))$ such that $\mathcal{W} \subseteq \W' \subseteq R(\mathcal{W})$: we want to show that one of the inclusion is not proper. We distinguish two cases:
 \begin{itemize}
\item $\W'$ is regular, 
then, by Remark \ref{rem: fatto ovvio su R} $R(\W)\subseteq R(\W')\subseteq R(R(W)) = R(\W)$, therefore $\W' = R(\W') = R(\W)$.

\item $\W'$ is irregular. By \emph{\cite[Theorem 11.4]{BuSa00}} there exists a free algebra $\mathbf{U}$ such that $\W' = HSP(\mathbf{U})$, so $\mathbf{U} \in \W' \subseteq R(\W')\subseteq R(\mathcal{W})$, therefore $\mathbf{U} \in R(\mathcal{W})$. Since $\mathcal{W}$ is irregular, by Theorem \ref{theorem: irregulrep} we have that $\mathbf{U}$ is a semilattice of subalgebras 
$\{\mathbf{U}_{i}\}_{i\in I}$
such that $\mathbf{U}_{i_0} \in \mathcal{W}$ (with $i_{0}$ the least element in $\I$). But $\mathbf{U \in \W'}$, so $\mathbf{U}$ satisfy at least one irregular identity, therefore by Lemma \ref{lemma: non-trivial semilattice} we have $|I|=1$, that is $\mathbf{U}=\mathbf{U}_{i_0} \in \mathcal{W}$. Consequently $\W'=HSP(\mathbf{U})\subseteq \mathcal{W}$ and $\W'=\mathcal{W}$.
        \end{itemize}
\noindent
(2) To see that $h$ is injective, suppose that we have $R(\mathcal{V}_1)=R(\mathcal{V}_2)$, for some $\mathcal{V}_1,\mathcal{V}_2 \in L(\mathcal{V})$  Reasoning as above, there exists a free algebra $\mathbf{U}\in \mathcal{V}_1$ such that $\mathcal{V}_1=HSP(\mathbf{U})$. Consequently $\mathbf{U} \in R(\mathcal{V}_1)= R(\mathcal{V}_2)$, so $\mathbf{U} \in R(\mathcal{V}_2)$. Now, the same reasoning applied in the proof of (1) shows that $\mathbf{U} \in \mathcal{V}_2$, hence $\mathcal{V}_1=HSP(\mathbf{U}) \subseteq \mathcal{V}_2$. $\mathcal{V}_2\subseteq \mathcal{V}_1$ is proven analogously. \\
\noindent
To show that $h$ is a lattice homomorphism, let $\mathcal{V}_1,\mathcal{V}_2\in L(\mathcal{V})$. We have $\mathcal{V}_1 \vee \mathcal{V}_2 \subseteq R(\mathcal{V}_1) \vee R(\mathcal{V}_2) \subseteq R(\mathcal{V}_1 \vee \mathcal{V}_2)$; moreover, by \cite[Corollary 2]{Plonka1984nullary}, $\mathcal{V}_1 \vee \mathcal{V}_2$ is irregular, therefore by (1) $R(\mathcal{V}_1)\vee R(\mathcal{V}_2)=R(\mathcal{V}_1 \vee \mathcal{V}_2)$ (since $R(\mathcal{V}_1)\vee R(\mathcal{V}_2)$ is regular). This proves that $h$ is join-preserving.

\noindent
Finally, using the P\l onka sums decomposition (see Theorem \ref{th: Plonka dec. theorem}) of $\mathbf{A} \in R(\mathcal{V}_1) \cap R(\mathcal{V}_2)$, it is easily shown that $\A \in R(\mathcal{V}_1 \cap \mathcal{V}_2)$, so $ R(\mathcal{V}_1)\cap R(\mathcal{V}_2) \subseteq R(\mathcal{V}_1 \cap \mathcal{V}_2)$, while the converse inclusion is immediate.
 \end{proof}

Theorem \ref{theorem: cover} allows to provide an equational basis to any irregular variety from one of its regularization.


\begin{corollary}\label{cor: equational basis of V}
Let $\V$ be an irregular variety (in a language possibly containing constants), $\Sigma$ an equational basis for $R(\V)$ and $p\approx q$ any irregular identity holding in $\V$. Then $\Sigma \cup \{p\approx q\}$  is an equational basis for $\V$. 
\end{corollary}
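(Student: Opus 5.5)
The plan is to consider the variety $\W := Mod(\Sigma \cup \{p\approx q\})$ and show that $\W = \V$. The strategy is to locate $\W$ inside $\Lg(R(\V))$ and then use the covering statement of Theorem \ref{theorem: cover}(1) to trap it between $\V$ and $R(\V)$.

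First, the inclusion $\V\subseteq\W$ is immediate. Indeed, $\V\subseteq R(\V)=Mod(\Sigma)$ by Remark \ref{rem: fatto ovvio su R}, and $\V\models p\approx q$ by hypothesis. Second, $\W\subseteq Mod(\Sigma)=R(\V)$. Hence $\W\in L(R(\V))$ and
\[
\V\subseteq \W\subseteq R(\V).
\]

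Next, apply Theorem \ref{theorem: cover}(1) with $\V$ itself as the element of $L(\V)$; this is allowed since $\V$ is irregular. It gives $\V\preccurlyeq R(\V)$ in $\Lg(R(\V))$, so either $\W=\V$ or $\W=R(\V)$. The second alternative is excluded because $\W\models p\approx q$, which is irregular. By the very definition of $R(\V)$, and using Lemma \ref{lemma: non-trivial semilattice} or the properties of P\l onka sums, $R(\V)$ contains algebras falsifying every irregular identity: for instance, a non-trivial P\l onka sum of algebras in $\V$ with a two-element semilattice, or simply the observation that $R(\V)$ is regular. In more detail, if $R(\V)\models p\approx q$ then $R(\V)$ would not be a regular variety. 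However, $R(\V)$ is axiomatized by regular identities, and the variety of semilattices (each regarded in type $\tau$ via the trivial-fibre P\l onka sum) satisfies all regular identities but no irregular one. Hence $\W\neq R(\V)$, and therefore $\W=\V$, i.e. $\Sigma\cup\{p\approx q\}$ is an equational basis for $\V$.

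The only delicate point is justifying that $R(\V)\not\models p\approx q$ when constants are present. The approach is to take the P\l onka sum over the two-element semilattice $\{0<1\}$ of two copies of a trivial algebra. Constants are interpreted in the bottom fibre. Since the type is plural, this yields a non-trivial semilattice of subalgebras that lies in $R(\V)$, because it satisfies all regular identities. By Lemma \ref{lemma: non-trivial semilattice} it fails every irregular identity, in particular $p\approx q$.
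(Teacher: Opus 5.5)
Your proof is correct and is the argument the paper intends. The paper gives no written proof and only says the corollary follows from Theorem~\ref{theorem: cover}; the implied route is yours: trap $Mod(\Sigma\cup\{p\approx q\})$ between $\V$ and $R(\V)$, use the cover $\V\preccurlyeq R(\V)$, and rule out $R(\V)$ because $R(\V)\not\models p\approx q$. Your two-element P\l onka sum of trivial algebras simply makes explicit that last fact, which already follows from the definition of $R(\V)$ as satisfying only regular identities.
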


In the following examples, we present alternative equational basis for the varieties of Boolean algebras, groups and skew braces, respectively, using equational basis of their respective regularizations (see Examples \ref{ex: Clifford semigroups}, \ref{ex: IBSL} and \ref{ex: dual weak braces}). 
\begin{example}\label{ex: Boolean algebras}
The following is an equational basis (see \cite{Bonzio16SL}) for involutive bisemilattices (see Example \ref{ex: IBSL}) in the (algebraic) language $\vee,\wedge, \neg, 0,1$ (of type $(2,2,1,0,0)$).   
\begin{enumerate}
\item $x\lor x\approx x$;
\item $x\lor y\approx y\lor x$;
\item $x\lor(y\lor z)\approx(x\lor y)\lor z$;
\item $\neg\neg x\approx x$;
\item $x\land y\approx\neg(\neg x\lor\neg y)$;
\item $x\land(\neg x\lor y)\approx x\land y$; 
\item $0\lor x\approx x$;
\item $1\approx\neg 0$.
\end{enumerate}
Boolean algebras can thus be axiomatized by adding e.g. the single identity $x\approx x\vee (x\wedge y)$ to (1)-(8). 
\end{example}

\begin{example}\label{ex: gruppi}
Clifford semigroups (see e.g. \cite{howie1995fundamentals}) can be axiomatized as follows (in the language $\cdot, ^{-1}$): 
\begin{itemize}
\item[(CS1)] $x\cdot (y\cdot z) \approx (x\cdot y)\cdot z$; 
\item[(CS2)] $(x^{-1})^{-1} \approx x$;
\item[(CS3)] $x\cdot (x^{-1}\cdot x) \approx x $; 
\item[(CS4)] $(x\cdot x^{-1})\cdot (y\cdot y^{-1})\approx (y\cdot y^{-1})\cdot (x\cdot x^{-1})$;
\item[(CS5)] $x\cdot x^{-1}\approx x^{-1}\cdot x$.
\end{itemize}
It follows from Corollary \ref{cor: equational basis of V} that groups can be axiomatized just by adding the identity $x\cdot (y\cdot y^{-1})\approx x$ to (CS1)-(CS5).
\end{example}

\begin{example}\label{ex: skew braces}
Dual weak braces, introduced in Example \ref{ex: dual weak braces}, are axiomatized by the following identities (see e.g. \cite{Stefanelli25}): 
\begin{itemize}
\item[(DWB1)] $x\cdot (y\cdot z) \approx (x\cdot y)\cdot z$; 
\item[(DWB2)] $(x^{-1})^{-1} \approx x$;
\item[(DWB3)] $x\cdot (x^{-1}\cdot x) \approx x $; 
\item[(DWB4)] $(x\cdot x^{-1})\cdot (y\cdot y^{-1})\approx (y\cdot y^{-1})\cdot (x\cdot x^{-1})$;
\item[(DWB5)] $x\cdot x^{-1}\approx x^{-1}\cdot x$.
\item[(DWB6)] $x\circ (y\circ z) \approx (x\circ y)\circ z$; 
\item[(DWB7)] $(x')' \approx x$;
\item[(DWB8)] $x\circ (x'\circ x) \approx x $; 
\item[(DWB9)] $(x\circ x')\circ (y\circ y')\approx (y\circ y')\circ (x\circ x')$;
\item[(DWB10)] $x \circ (y\cdot  z) \approx (x \circ y) \cdot x^{-1} \cdot (x \circ z)$;
\item[(DWB11)] $x\cdot x^{-1}\approx x\circ x'$.
\end{itemize}
It follows by Corollary \ref{cor: equational basis of V} that the above identities plus the single identity $x\cdot (y\cdot y^{-1}) \approx x$ is an equational basis for (the variety of) skew braces.

\end{example}

 In the following we denote by $\mathbf{2}$ the $2$-element (chain) semilattice (with universe $\{0,1\}$).

\begin{theorem}\label{theorem: L}
    Let $\tau$ be an algebraic language containing constants and $\mathcal{V}$ a strongly irregular $\tau$-variety, then $\mathbf{L}(R(\mathcal{V}))) \cong \mathbf{L}(\mathcal{V}) \times \mathbf{2}$.
\end{theorem}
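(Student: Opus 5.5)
The plan is to show that every subvariety $\mathcal{U}$ of $R(\V)$ is either a subvariety of $\V$ or of the form $R(\W)$ for some $\W\in L(\V)$. Granting this, the map
\[
\Phi\colon \Lg(\V)\times\mathbf{2}\to\Lg(R(\V)),\qquad (\W,0)\mapsto \W,\quad (\W,1)\mapsto R(\W),
\]
is the candidate isomorphism.

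\textbf{Step 1: surjectivity.} Let $\mathcal{U}\in L(R(\V))$. If $\mathcal{U}$ is irregular, take $\W=\mathcal{U}\cap\V$. Since $\mathcal{U}$ satisfies some irregular identity and $\V$ is strongly irregular, the free algebra $\mathbf{U}$ of $\mathcal{U}$ lies in $R(\V)$. By Theorem \ref{theorem: irregulrep} it is a semilattice of subalgebras, and by Lemma \ref{lemma: non-trivial semilattice} that semilattice is trivial. So $\mathbf{U}=\mathbf{U}_{i_0}\in\V$ (Theorem \ref{theorem: irregulrep} gives $\mathbf{U}_{i_0}\in\V$ even with constants), hence $\mathcal{U}=HSP(\mathbf{U})\subseteq\V$. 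This is the same argument as in Theorem \ref{theorem: cover}(1).

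If $\mathcal{U}$ is regular, put $\W=\mathcal{U}\cap\V$. Then $\W\subseteq\mathcal{U}\subseteq R(\W)$, and $\W$ is irregular, being contained in $\V$. The key claim is that $\W\neq\mathcal{U}$ forces $\mathcal{U}=R(\W)$, which is exactly the cover statement of Theorem \ref{theorem: cover}(1), since $\W\subsetneq\mathcal{U}$ because $\mathcal{U}$ is regular.

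It remains to check $\mathcal{U}\subseteq R(\W)$. I would argue this via the Płonka decomposition. Any $\A\in\mathcal{U}\subseteq R(\V)$ is $\PLA$ with fibers in $\V$ (Theorem \ref{th: regularization of st.irregular varieties}). Each fiber is a homomorphic image of $\A$, because the map $x\mapsto x\odot a$ for a fixed $a\in A_i$ retracts onto the components above $i$; composing with the quotient collapsing higher components gives a homomorphism, or at least each fiber lies in $HS(\A)$. Then each fiber is in $\mathcal{U}\cap\V=\W$, so $\A\in R(\W)$.

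\textbf{Step 2: order-embedding.} $\Phi$ preserves and reflects the order because:
\begin{itemize}
\item $\W\mapsto R(\W)$ is an embedding by Theorem \ref{theorem: cover}(2);
\item $\W_1\subseteq\W_2$ implies $\W_1\subseteq R(\W_2)$;
\item $R(\W_1)\subseteq\W_2$ never holds, since $R(\W_1)$ is regular and $\W_2$ is not;
\item $\W_1\neq R(\W_2)$, since one is irregular and the other regular.
\end{itemize}
A bijective order-embedding between lattices is a lattice isomorphism.

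\textbf{Main obstacle.} The hardest point is showing that fibers of a Płonka sum in $\mathcal{U}$ belong to $\mathcal{U}$ when constants are present, since fibers other than $\A_{i_0}$ need not be subalgebras. I would first try the map $\A\to\A_i$, $x\mapsto p_{k,k\vee i}(x)$ for $x\in A_k$ (essentially $x\odot a$). Its image is the union of fibers above $i$, which is a subalgebra because the constants become $c\odot a\in A_i$. I would then find a homomorphism from that union onto $\A_i$, which is not automatic.

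If this fails, I would replace it with the syntactic route. If $\mathcal{U}\models\sigma$ for a regular identity $\sigma$, then $\W\models\sigma$ trivially, and conversely $R(\W)$ satisfies exactly the regular identities of $\W$. So $\mathcal{U}\subseteq R(\W)$ reduces to showing that each regular identity true in $\W$ holds in $\mathcal{U}$. To get this, combine a basis $\Sigma$ of $\mathcal{U}$ with Corollary \ref{cor: equational basis of V}: $\W$ is axiomatized by $\Sigma$ together with the identity $f(x,y)\approx x$. Then analyze consequences of adding $f(x,y)\approx x$ to regular identities, following Płonka's regular-consequence argument.
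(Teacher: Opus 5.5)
Your strategy is sound and differs from the paper's. The paper takes a free algebra $\mathbf{U}$ generating the regular subvariety and uses $Id(\mathbf{U})=\bigcap_i\mathcal{R}(Id(\mathbf{U}_i))=\mathcal{R}(Id(\prod_i\mathbf{U}_i))$ to exhibit it as $R(HSP(\prod_i\mathbf{U}_i))$; you instead work with the canonical candidate $\mathcal{U}\cap\V$ and the cover property.

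However, the step that carries the whole regular case is not established: that every fiber $\A_i$ of a member $\A\in\mathcal{U}$ lies in $\mathcal{U}$. The arguments you sketch for it fail.
\begin{itemize}
\item Fibers are in general \emph{not} homomorphic images of $\A$. In the group language $(\cdot,{}^{-1},e)$, take the chain $i_0<i<k$ with $\A_{i_0}$ and $\A_k=\{z\}$ trivial and $\A_i=\mathbb{Z}_2$. Then $x\cdot z=z$ for all $x$, so every homomorphism $h$ from $\A$ into a group satisfies $h(x)h(z)=h(z)$, hence is trivial. So $\mathbb{Z}_2\notin H(\A)$.
\item The map $x\mapsto x\odot a$ is not an endomorphism of $\A$: it sends $c^{\A}\in A_{i_0}$ to $c^{\A_i}\neq c^{\A}$.
\item Its image $\bigcup_{k\ge i}A_k$ is not a subuniverse of $\A$, since it contains no constant of $\A$. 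In the example above it still contains the absorbing element $z$, so it admits no homomorphism onto $\A_i$ either.
\item The ``syntactic route'' merely restates the goal $\mathcal{U}\subseteq R(\mathcal{U}\cap\V)$ without giving an argument.
\end{itemize}

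The missing idea is small. For $i=i_0$ the fiber is a subalgebra. For $i\neq i_0$, the set $A_{i_0}\cup A_i$ is a subuniverse of $\A$, because $\{i_0,i\}$ is a subsemilattice and $A_{i_0}$ contains all constants. Define a map on it that is $p_{i_0i}$ on $A_{i_0}$ and the identity on $A_i$; this is $x\mapsto x\odot a$ restricted there. It is a surjective homomorphism onto $\A_i$, because it sends $c^{\A}=c^{\A_{i_0}}$ to $p_{i_0i}(c^{\A_{i_0}})=c^{\A_i}$.

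Hence $\A_i\in HS(\A)\subseteq\mathcal{U}$, so $\A_i\in\mathcal{U}\cap\V$. Since a P\l onka sum satisfies every regular identity valid in all its fibers, $\A\in R(\mathcal{U}\cap\V)$.

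This is equivalent to the paper's equational form. For $\vec a$ in $A_i$ one has $t^{\A}(\vec a)=t^{\A_i}(\vec a)$ whenever $t$ contains a variable, because constants are transported by $p_{i_0i}$. So regular identities of $\A$ hold in $\A_i$, and a nontrivial sum satisfies no irregular identity; thus $Id(\A)\subseteq Id(\A_i)$.

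With this in place you do not even need the cover theorem: $\mathcal{U}\cap\V\subseteq\mathcal{U}$ gives $R(\mathcal{U}\cap\V)\subseteq R(\mathcal{U})=\mathcal{U}$.

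Finally, in Step 2 your list omits order reflection in the mixed case, $\W_1\subseteq R(\W_2)\Rightarrow\W_1\subseteq\W_2$. It follows from your Step 1 irregular-case argument, applied to the free algebra of $\W_1$ viewed inside $R(\W_2)$ with $\W_2$ irregular.
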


\begin{proof}
The proof runs exactly as that of \cite[Theorem 3]{Dudek}: we just give a sketch of it. 
Let $\phi\colon \Lg(\mathcal{V}) \times \mathbf{2} \rightarrow \Lg(R(\mathcal{V}))$ be defined as follows:

\begin{center}
    
    $
    \phi(\mathcal{W},j):=\begin{cases}
    \mathcal{W}, \text{ for } $j=0$;
    \\
    R(\mathcal{W}), \text{ for } $j=1$.
    \end{cases}$
\end{center}

It is routine to check that $\phi$ is an injective homomorphism. To prove surjectivity, observe that if $\mathcal{W} \in L(\V) $ then $\phi((\mathcal{W},0))=\mathcal{W}$. So suppose $\mathcal{W} \in L(R(\mathcal{V})) \setminus L(\mathcal{V})$. Let $\mathbf{U}$ be a free algebra such that $\mathcal{W}=HSP(\mathbf{U})$. Since $\mathcal{V}$ is strongly irregular and $\mathbf{U} \in HSP(\mathbf{U})=\mathcal{W} \subseteq R(\mathcal{V})$, by Theorem \ref{th: regularization of st.irregular varieties} we have that $\mathbf{U}$ is a P\l onka sum over a semilattice direct system $\mathbb{U}=(\{\mathbf{U}_i\}_{i\in I},(I, \le), \{p_{ij}\}_{i\le j})$ of algebras in $\mathcal{V}$. Let $X$ be an infinite set of variables and denote by $Id_{\A}(X)$ and $\mathcal{R}(Id_{\A}(X))$, respectively, the set of identities in variables from $X$ satisfied by an algebra $\A$ and the subset of the regular ones, respectively. Then by the theory of P\l onka sums, we have $\displaystyle \mathcal{R}(Id_{\mathbf{U}}(X))=Id_{\mathbf{U}}(X)=\bigcap_{i\in I}{\mathcal{R}(Id_{\mathbf{U}_i}(X))}=\mathcal{R}(Id_{\prod_{i\in I}{\mathbf{U}_i}}(X))$. Since $\mathcal{V}$ is a variety, we have $\displaystyle \prod_{i\in I}{\mathbf{U}_i} \in \mathcal{V}$. Let $\displaystyle \mathcal{K}=HSP\left(\prod_{i\in I}{\mathbf{U}_i} \right)$, then $\mathcal{K} \in L(\mathcal{V})$ and $Id_{\mathcal{R}(\mathcal{K})}(X)=\mathcal{R}(Id_{\mathcal{K}}(X))=\mathcal{R}(Id_{\prod_{i\in I}{\mathbf{U}_i}}(X))=Id_{\mathbf{U}}(X)=Id_{\mathcal{W}}(X)$, so $\mathcal{W}=\phi((\mathcal{K},1))$.
    \end{proof}

We will denote by $\mathcal{T}$ the trivial variety, i.e. the variety satisfying any identity (in a fixed algebraic language). It can be shown that the regularization $R(\mathcal{T})$ of the trivial variety is the variety of $\mathcal{SL}^{\tau} $ that is term-equivalent to the variety of semilattices (see e.g. \cite{RomanowskaPlonka92} for details). Moreover, a variety $\V$ is regular if and only if contains $\mathcal{SL}^{\tau}$ as subvariety\footnote{The assumption that the algebraic type contains at least an operation of arity greater than 1 is crucial for this result.} \cite[Proposition 2.1]{RomanowskaPlonka92}. With a slight notational abuse, from now on we will indicate the variety $\mathcal{SL}^{\tau}$ simply as $\mathcal{SL}$.



 \begin{corollary}
    Let $\tau$ be an algebraic plural language containing constants and $\V$ an irregular variety. Then $R(\V)=\V \vee \mathcal{SL}$.
\end{corollary}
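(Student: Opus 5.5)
The plan is to show both inclusions $\V\vee\SL\subseteq R(\V)$ and $R(\V)\subseteq \V\vee\SL$. The first is immediate. By Remark \ref{rem: fatto ovvio su R}, $\V\in\Lg(R(\V))$. Since $R(\V)$ is regular, it contains $\SL$ as a subvariety by \cite[Proposition 2.1]{RomanowskaPlonka92}; plurality of $\tau$ is used exactly here. Hence the join $\V\vee\SL$ lies below $R(\V)$ in $\Lg(R(\V))$.

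For the reverse inclusion I will use the cover relation of Theorem \ref{theorem: cover}(1). Since $\V$ is irregular, applying it with $\W=\V$ gives $\V\preccurlyeq R(\V)$ in $\Lg(R(\V))$. The variety $\V\vee\SL$ then sits in the interval $\V\subseteq \V\vee\SL\subseteq R(\V)$, so it equals either $\V$ or $R(\V)$.

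It remains to exclude $\V\vee\SL=\V$, that is, to show $\SL\not\subseteq\V$. This holds because $\V$ satisfies an irregular identity $p\approx q$, while $\SL$ satisfies only regular identities. Concretely, the two-element semilattice, with the operations interpreted as the corresponding join terms and the constants as the bottom element, falsifies $p\approx q$: assign $1$ to a variable occurring in only one side and $0$ to all other variables. Equivalently, $\V$ is irregular and hence, by \cite[Proposition 2.1]{RomanowskaPlonka92}, does not contain $\SL$. Therefore $\V\vee\SL\neq\V$, and so $\V\vee\SL=R(\V)$.

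The only delicate point is ensuring that constants cause no trouble. The cover result of Theorem \ref{theorem: cover}(1) is already stated for languages possibly containing constants. The characterization ``regular iff contains $\SL$'' needs the plural-type assumption, and the statement includes it, so I expect no real obstacle.
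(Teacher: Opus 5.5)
Your proposal is correct and follows essentially the route the paper has in mind. The paper states this corollary without proof, but the intended argument is yours: $\V\vee\SL$ lies between $\V$ and $R(\V)$, it differs from $\V$ because it is regular, and the cover $\V\preccurlyeq R(\V)$ from Theorem \ref{theorem: cover}(1) forces $\V\vee\SL=R(\V)$. This is the same pattern the paper uses for join preservation in Theorem \ref{theorem: cover}(2), and it needs only irregularity, not the strong irregularity required by Theorem \ref{theorem: L}.
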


\subsection{Subdirectly irreducible algebras}


The characterization of subdirectly irreducible P\l onka sums given in \cite{Lakser72} actually holds also for algebras containing constant symbols. We recall the main results for which we provide only sketches of the proofs as they run exactly as in \cite{Lakser72}. 

\begin{definition}\label{def: Astar}
    Let $\A$ be an algebra and $\infty \not \in A$ the universe of a trivial algebra. We denote by $\A^{\ast}$ the P\l onka sum over the semilattice direct system $(\{\A_i=\A, \A_j=\{\infty\}\}, \{i<j\}, \{p_{ij}\})$.
\end{definition}


\begin{lemma}\label{lemma: SI1}
    Let $\A$ be a P\l onka sum over a semilattice direct system of algebras indexed on $\I=(I,\leq)$. 
    If $\A$ is subdirectly irreducible, then $\mathbf{I}$ is subdirectly irreducible.
\end{lemma}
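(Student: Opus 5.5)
The plan is to pull congruences of the index semilattice $\I$ back to congruences of $\A$ along the canonical projection $\pi\colon A\to I$, $\pi(a)=i$ iff $a\in A_i$, and then use the characterization of subdirect irreducibility through the monolith. First I will check that $\pi$ is a homomorphism from $\A$ onto $\I$, where $\I$ is regarded as an algebra of type $\tau$. Every $n$-ary operation ($n\geq 1$) is interpreted as $i_1\vee\dots\vee i_n$, and every constant as $i_0$; this is the standard representative of $\mathcal{SL}$. The check is immediate from the definition of the P\l onka sum: $g^{\A}(a_1,\dots,a_n)\in A_{i_1\vee\dots\vee i_n}$ and $c^{\A}\in A_{i_0}$. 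Since the type is plural, the congruence lattice of $\I$ as a $\tau$-algebra coincides with the congruence lattice of the semilattice $(I,\vee)$. Compatibility with unary operations and constants is automatic, because unary operations act as the identity.

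Next I will use the correspondence theorem. For $\theta\in\mathrm{Con}(\I)$, the map $\theta\mapsto\pi^{-1}(\theta)=\{(a,b): (\pi(a),\pi(b))\in\theta\}$ is an order embedding of $\mathrm{Con}(\I)$ into the interval $[\ker\pi,\nabla_A]$ of $\mathrm{Con}(\A)$, and it preserves arbitrary meets, since preimages preserve intersections. Suppose $\I$ is not subdirectly irreducible. If $|I|=1$, the statement is about the trivial semilattice, which I set aside as the degenerate case; by convention it is either excluded or handled directly. Otherwise there is a family $\{\theta_k\}$ of congruences of $\I$, all different from $\Delta_I$, with $\bigcap_k\theta_k=\Delta_I$. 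Then $\bigcap_k\pi^{-1}(\theta_k)=\pi^{-1}(\Delta_I)=\ker\pi$.

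The main obstacle is that $\ker\pi$ need not be $\Delta_A$, so this alone only shows that $\ker\pi$ is not completely meet-irreducible. I will handle it as follows. If $\A$ is subdirectly irreducible, let $\mu$ be its monolith. Each $\pi^{-1}(\theta_k)$ properly contains $\ker\pi$, because $\theta_k\neq\Delta_I$ and $\pi$ is onto, so it is in particular $\neq\Delta_A$ and hence contains $\mu$. It follows that $\mu\subseteq\ker\pi$. So it suffices to show that the monolith cannot lie inside $\ker\pi$ when $|I|\ge 2$.

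For this I will exhibit congruences that meet $\ker\pi$ trivially. Pick $j\in I$ with $j\ne i_0$, for instance a maximal element of a two-element quotient, or simply any element. Consider the congruence $\sigma_j$ that collapses all of $\uparrow j$ (the set $\bigcup_{k\ge j}A_k$) to a single class and is the identity elsewhere. This is a Rees-type congruence: $\uparrow j$ is an ideal of $\A$, since any operation with an argument in $\uparrow j$ lands in $\uparrow j$, so $\sigma_j$ is compatible with all operations. Its intersection with $\ker\pi$ is $\Delta$ on the single fiber $A_j$ if $j$ is maximal, and trivial otherwise. To obtain $\mu\not\subseteq\ker\pi$ cleanly, I will choose the family $\theta_k$ itself from these ideal congruences of $\I$ (collapse $\uparrow j$ in $I$), and I will separate pairs inside a single fiber $A_i$ by a congruence of the form $\sigma$ that collapses $\uparrow j$ for some $j\not\le i$. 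Such a $\sigma$ avoids the pair, and such a $j$ exists when $I$ is not SI, since a non-SI semilattice has no single element that every such collapse must hit. This gives two nontrivial congruences on $\A$ whose intersection misses $\mu$, contradicting the subdirect irreducibility of $\A$. This last combinatorial step on semilattice ideals is the one I expect to require the most care.
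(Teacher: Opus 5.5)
Your first half is sound: $\pi$ is compatible with all operations, each $\pi^{-1}(\theta_k)$ properly contains $\ker\pi$, and hence $\mu\subseteq\ker\pi$. The gap is in the step that is supposed to produce the contradiction.

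First, your claim that a monolith cannot lie inside $\ker\pi$ once $|I|\ge 2$ is false. Take $\B$ to be the two-element Boolean algebra. Then $\B^{\ast}$ has exactly the congruences $\Delta\subset\ker\pi\subset\nabla$, so it is subdirectly irreducible with monolith $\ker\pi$ and $|I|=2$. You therefore have to use non-subdirect-irreducibility of $\I$ again at this point.

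More seriously, you only ever construct two kinds of congruences: the pullbacks $\pi^{-1}(\theta)\supseteq\ker\pi$, and the Rees congruences $\sigma_j=\Delta_A\cup(U_j\times U_j)$ with $U_j=\bigcup_{k\ge j}A_k$. Incidentally, $\sigma_j\cap\ker\pi=\Delta_A\cup\bigcup_{k\ge j}(A_k\times A_k)$, not what you wrote. If $\I$ has a top element $1$, then $1\in\uparrow j$ for every $j$, so every one of these congruences contains $A_1\times A_1$. In that case there is an element that every collapse hits, and your assertion that some $j\not\le i$ exists fails for $i=1$. Concretely, let $\I$ be the three-element chain $i_0<m<1$, which is not subdirectly irreducible, with $|A_1|\ge 2$. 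Nothing in your argument rules out $\mu\subseteq\Delta_A\cup(A_1\times A_1)$. The same problem arises for a pair in $A_m$ when $|A_1|=1$, since then $\sigma_1=\Delta_A$.

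What is missing is a supply of nontrivial congruences of $\A$ that are the identity on the upper fibers. The paper gets them by sending $\theta\in Con(\I)$ not to $\pi^{-1}(\theta)$, which lies above $\ker\pi$, but to $\theta^{\A}$. This relates $a\in A_i$ and $b\in A_j$ iff $p_{ik}(a)=p_{jk}(b)$ for some $k\ge i,j$ with $(i,k),(j,k)\in\theta$. It sends $\Delta_{\I}$ to $\Delta_{\A}$ and nontrivial $\theta$ to nontrivial congruences, which is what lets meet-irreducibility of $\Delta$ transfer from $Con(\A)$ to $Con(\I)$.

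Your own route can also be repaired, as follows.
\begin{itemize}
\item The map $a\mapsto p_{h,h\vee k}(a)$ (for $a\in A_h$) commutes with every operation of positive arity, so its kernel $\kappa_k$ is a congruence.
\item $\kappa_k$ is the identity on each $A_h$ with $h\ge k$.
\item For $k\ne i_0$, $\kappa_k$ is nontrivial, since it identifies $a\in A_{i_0}$ with $p_{i_0k}(a)$.
\end{itemize}
Now write $\mu=Cg^{\A}(a,b)$ with $a\ne b$ in some $A_i$. If $i\ne i_0$, then $\kappa_i$ is nontrivial and separates $a$ and $b$, so $i\ne i_0$ is impossible. If $i=i_0$, any $\sigma_j$ with $j\ne i_0$ and $|\uparrow j|\ge 2$ separates $a$ and $b$. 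Such a $j$ exists: $|I|\ge 3$ because $\I$ is nontrivial and not subdirectly irreducible, and only the top element can have $\uparrow j=\{j\}$.
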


\begin{proof}
The proof strategy runs as follows: 
 \begin{enumerate}
 \item to each $\theta\in Con(I)$ associate a congruence $\theta^{\A}\in Con(\A)$ defined as 
 
 $$\text{for } a\in A_i, b\in A_j: (a,b) \in \theta^{\A} \iff \exists k\in I: i,j\le k, (i,k),(j,k) \in \theta  \text{ and } p_{ik}(a)=p_{jk}(b).$$
 
 \item Show that, for every $ \theta \in Con(I) $ if $ \theta^{\A}=\Delta_{\A}$ then $\theta = \Delta_{\mathbf{I}}$.

\item Show that, given $D\subseteq Con(\mathbf{I})$ a family such that $\displaystyle \bigcap D = \Delta_{\mathbf{I}}$ then $\Delta_{\mathbf{I}} \in D$. The argument proceeds by contradiction, considering the family $D^{\A} = \{\theta^{\A}:\theta \in D\}\subseteq Con(\A)$, $D_1=\{\psi \in Con(\mathbf{I}): \psi \neq \Delta_{\mathbf{I}}, \exists\theta \in D: \psi \subseteq \theta\}$, $D_2=D \cup D_1$ and showing that $\displaystyle \bigcap D_2^{\A}=\Delta_{\A}$. 

         \end{enumerate}
\end{proof}

\begin{definition}
    An element $a$ of a non-trivial algebra $\A$ is an \emph{absorbing element} if, for every operation symbol $f$ (of arity $n\geq 1$), $a\in \{x_1,...,x_n\}$ implies that $f^{\A}(x_1,...,x_n)=a$. 
\end{definition}

\begin{theorem}\emph{\cite[Main Theorem]{Lakser72}}\label{theorem: SI2}
    Let $\A$ be a P\l onka sum over a semilattice direct system $\mathbb{A} = (\{\A_i\}_{i\in I}, (I, \le), \{p_{ij}\}_{i\le j})$ of algebras in a variety $\V$ (possibly containing constant symbols). Then $\A$ is subdirectly irreducibile if and only if one of the following occurs:

    \begin{enumerate}
        \item $\A$ is subdirectly irreducible in $\V$;
        \item There exists $\B \in \V$ subdirectly irreducible and without absorbing elements such that $\A \cong \B^{\ast}$.
    \end{enumerate}
\end{theorem}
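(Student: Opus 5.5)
We follow Lakser's argument and check that constants cause no trouble. The structure of the argument is dictated by Lemma \ref{lemma: SI1}. If $\A=\PLA$ is subdirectly irreducible, then $\I$ is a subdirectly irreducible semilattice. The only subdirectly irreducible semilattices are the trivial one and the two-element chain $\mathbf{2}$. If $|I|=1$, then $\A=\A_{i_0}\in\V$, and $\A$ is subdirectly irreducible in $\V$, since congruences of $\A$ and of $\A_{i_0}$ coincide; this is case (1). So the substantive work is the case $I=\{i_0<j\}$ with transition map $p=p_{i_0j}\colon \A_{i_0}\to\A_j$. Note that the constants live in $\A_{i_0}$, which is the bottom fibre, so the presence of constants is compatible with this shape.

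\emph{Forward direction, case $I=\{i_0<j\}$.} First we show that $\A_j$ is trivial. The relation collapsing $A_{i_0}$ to a point while leaving $A_j$ alone is not a congruence. Instead we use the following family of congruences.
\begin{itemize}
\item For each $\theta\in Con(\A_j)$, define $\hat\theta$ on $A$ by: $a\mathrel{\hat\theta} b$ iff $a=b$, or $a,b\in A_j$ with $a\mathrel{\theta} b$.
\item $\hat\theta$ is a congruence of $\A$. Any operation applied to arguments not all in $A_{i_0}$ lands in $A_j$ and is computed via $p$. Constants are fixed points and impose nothing.
\item Let $\mu$ be the kernel of the map $\A\to\I$, which is a congruence by the Płonka construction. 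We have $\mu\cap\hat\theta=\hat\theta$, and in general $\bigcap_\theta \hat\theta=\Delta_{\A}$ whenever $\bigcap\theta=\Delta_{\A_j}$.
\item Also consider the congruence $\ker$-type relation $\kappa$: $a\mathrel{\kappa} b$ iff $a=b$, or $a,b\in A_{i_0}$ with $p(a)=p(b)$. One checks that $\kappa$ is a congruence.
\end{itemize}
The intersection of $\kappa$ with the $\hat\theta$'s, for $\theta$ ranging over nontrivial congruences of $\A_j$, is $\Delta_{\A}$. Subdirect irreducibility of $\A$ then forces one of them to be $\Delta_{\A}$.

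We push this to the conclusion as follows.
\begin{itemize}
\item If $|A_j|>1$, then the nontrivial congruences of $\A_j$ separate points of $A_j$, which leads to a contradiction. The dichotomy here is the delicate point. To make it work, take $\theta$ to be $A_j\times A_j$.
\item The monolith of $\A$ must lie in $A_j\times A_j$ or in $A_{i_0}\times A_{i_0}$.
\item Comparing the monolith with $\kappa$ and with $\widehat{\nabla}$ shows that $A_j$ is a singleton $\{\infty\}$.
\item Then $\A\cong\B^{\ast}$ with $\B=\A_{i_0}$.
\item $\B$ is subdirectly irreducible, because $Con(\B)$ embeds into $Con(\A)$ via $\theta\mapsto\theta\cup\Delta_{\A}$, which preserves meets and $\Delta$.
\item If $\B$ had an absorbing element $a$, then $\{a,\infty\}^2\cup\Delta_{\A}$ and the congruence collapsing $A_{i_0}$ to the identity are both nontrivial with trivial meet, contradicting subdirect irreducibility. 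We must check that constants do not break this: absorbing elements are defined only via non-nullary operations, so they do not.
\end{itemize}

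\emph{Converse.} Case (1) is immediate. For case (2), take $\A=\B^{\ast}$ and any congruence $\psi\neq\Delta_{\A}$.
\begin{itemize}
\item If $\psi$ identifies some $b\in B$ with $\infty$, then for every $n$-ary $f$ with $n\ge 2$ we get $f(b,x,\dots)\mathrel{\psi}\infty$. Using the strongly irregular term $f(x,y)\approx x$ of $\V$, we obtain $b'\mathrel{\psi}\infty$ for many $b'$.
\item Otherwise $\psi$ restricts to a nontrivial congruence of $\B$, and so contains the monolith $\mu_{\B}$.
\item In the first case, we argue that $\psi\restriction_B\neq\Delta_B$ unless $b$ is absorbing. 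If $\psi\cap B^2=\Delta_B$ with $b\mathrel{\psi}\infty$, then for any operation $g$ and arguments containing $b$ we have $g(\dots b\dots)\mathrel{\psi}\infty\mathrel{\psi} b$, forcing $g(\dots b\dots)=b$, i.e.\ $b$ is absorbing, which is excluded.
\item Hence every nontrivial congruence contains $\mu_{\B}\cup\Delta_{\A}$, which is therefore the monolith.
\end{itemize}

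The main obstacle is the forward-direction step forcing $A_j$ to be trivial. We expect to handle it exactly as Lakser does, by using the monolith together with the congruences $\kappa$ and $\widehat{\theta}$ above. We would verify directly that each of these relations is compatible with the constants $c^{\A}\in A_{i_0}$, which only requires that $p$ preserves constants.
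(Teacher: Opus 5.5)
Your overall route is the paper's. Lemma \ref{lemma: SI1} reduces to $\I=\{i_0<j\}$, and your $\widehat{\nabla}$ and $\kappa$ are exactly the paper's $\theta_1$ and $\theta_2$. Your arguments that $\B=\A_{i_0}$ is subdirectly irreducible without absorbing elements are fine, and so is the converse.

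\textbf{The gap} is the step you flag yourself: forcing $|A_j|=1$. Since $\widehat{\theta}\cap\kappa=\Delta_{\A}$ for every $\theta$, subdirect irreducibility only gives ``$|A_j|=1$ or $\kappa=\Delta_{\A}$''. And $\kappa=\Delta_{\A}$ only says that $p$ is injective. Nothing else in your toolkit can exclude that case. Every $\widehat{\theta}$ lies inside the fibre kernel $\mu$, and $\theta\mapsto\widehat{\theta}$ merely reproduces $Con(\A_j)$. So if $p$ is injective and $\A_j$ is subdirectly irreducible, these congruences are compatible with $\A$ having a monolith. For instance, let $\A_{i_0},\A_j$ be disjoint copies of a subdirectly irreducible $\B$ with $|B|>1$, and let $p$ be an isomorphism. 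Then $\kappa=\Delta_{\A}$ and the $\widehat{\theta}$'s have least nontrivial member $\widehat{\mu_{\A_j}}$. Yet $\A\cong\B\times\mathbf{2}$ (with $\mathbf{2}$ read as a member of $\mathcal{SL}$), so $\A$ is not subdirectly irreducible. Your bullets ``the nontrivial congruences of $\A_j$ separate points'' and ``comparing the monolith with $\kappa$ and $\widehat{\nabla}$'' do not supply an argument here.

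\textbf{The fix} is a congruence that crosses the two fibres: the kernel of $\pi\colon\A\to\A_j$, where $\pi(x)=p_{hj}(x)$ for $x\in A_h$.
\begin{itemize}
\item $\pi$ is a homomorphism. For operations of arity $n\geq 1$ this follows from the definition of the P\l onka sum and $p_{kj}\circ p_{hk}=p_{hj}$. For constants it holds because $p$ preserves them; this is the only place where constants need checking.
\item $\ker\pi\neq\Delta_{\A}$, since it contains $(a,p(a))$ for every $a\in A_{i_0}$ and the fibres are disjoint.
\item $\ker\pi\cap\widehat{\nabla}=\Delta_{\A}$, because $\pi$ is the identity on $A_j$.
\end{itemize}
Subdirect irreducibility then forces $\widehat{\nabla}=\Delta_{\A}$, i.e.\ $|A_j|=1$, with no case distinction. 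The paper's own sketch has the same hole. From $\theta_1\cap\theta_2=\Delta$ it concludes $|A_j|=1$ ``in both cases'', but $\theta_2=\Delta$ only yields injectivity of $p_{ij}$; $\ker\pi$ is what closes it.

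\textbf{A minor point in your converse.} The appeal to a strongly irregular term is not available, since the theorem is stated for an arbitrary $\V$. It is also unnecessary: your absorbing-element argument already shows that $\psi\cap B^2\neq\Delta_B$ whenever $\psi$ links some $b\in B$ with $\infty$.
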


\begin{proof}

By Lemma \ref{lemma: SI1}, $\mathbf{I}$ is subdirectly irreducibile, that is it is trivial or it is it (up to isomorphisms) the two-element chain 
    $i<j$. 
    In the former case we immediately get (1), in the latter define $\theta_1, \theta_2 \in Con(\A)$ as:
   $$\forall x,y\in A: (x,y)\in \theta_1 \iff x=y \text{ or } x,y\in A_j ; $$
 $$\forall x,y\in A: (x,y) \in \theta_2 \iff \exists h\in \{i,j\}: x,y\in A_h \text{ and } p_{hj}(x)=p_{hj}(y). $$

 \noindent 
 $\theta_1 \cap \theta_2=\Delta$, so $\theta_1=\Delta$ or $\theta_2=\Delta$ (as $\A$ is subdirectly irreducibile) and, in both cases,$|A_j|=1$, so $\A \cong \A_i^{\ast}$. Finally, one shows (by contadiction) that $\A_i$ has no absorbing element and that $\A_i$ is subdirectly irreducible.


\end{proof}

\begin{corollary}\label{corollary: irreducible1}\emph{\cite[Corollary]{Lakser72}}
    Let $\V$ be a strongly irregular variety (possibly containing constant symbols). Then $\A\in R(\V)$ is subdirectly irreducible if and only if one of the following occurs:

    \begin{enumerate}
        \item $\A$ is subdirectly irreducible in $\V$;
        \item There exists a subdirectly irreducible algebra $\B\in \V$ such that $\A=\B^{\ast}$.
    \end{enumerate}
\end{corollary}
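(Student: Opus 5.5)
The corollary follows from Theorem~\ref{theorem: SI2} once we combine it with Theorem~\ref{th: regularization of st.irregular varieties}. The only new input is that, in a strongly irregular variety, the condition ``without absorbing elements'' holds automatically.

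First, let $\A\in R(\V)$. Since $\V$ is strongly irregular, Theorem~\ref{th: regularization of st.irregular varieties} writes $\A=\PLA$ over a semilattice direct system of algebras in $\V$, so Theorem~\ref{theorem: SI2} applies. For the direction ``$\Rightarrow$'', if $\A$ is subdirectly irreducible, Theorem~\ref{theorem: SI2} gives either (1) directly, or $\A\cong\B^{\ast}$ with $\B\in\V$ subdirectly irreducible, which is (2).

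For ``$\Leftarrow$'', case (1) is clear: subdirect irreducibility depends only on the congruence lattice of $\A$, not on the ambient class. In case (2), $\B^{\ast}$ is by definition a P\l onka sum over a system of algebras in $\V$, since the trivial algebra $\{\infty\}$ lies in $\V$. So $\B^{\ast}\in R(\V)$, and we need only check the hypothesis of Theorem~\ref{theorem: SI2}(2), namely that $\B$ has no absorbing element.

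This is the one real step. Let $f(x,y)$ be a term with $\V\models f(x,y)\approx x$ in which both $x$ and $y$ occur. Suppose $a\in B$ is absorbing. Since $\B$ is subdirectly irreducible, it is non-trivial, so pick $b\neq a$. We claim that $t^{\B}(\bar c)=a$ for every term $t$ in which some variable occurs, whenever $a$ is substituted for at least one occurring variable. We prove this by induction on $t$. For a variable it is immediate. For $t=g(t_1,\dots,t_n)$ with $n\geq1$, some $t_k$ contains the variable that receives $a$, so $t_k^{\B}=a$ by induction, and then absorption gives $t^{\B}=a$. Constants and subterms without that variable do not matter, because absorption needs only one argument equal to $a$.

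Applying the claim to $f$ with $x\mapsto b$ and $y\mapsto a$ gives $b=f^{\B}(b,a)=a$, a contradiction. The subtle point is that the induction must track the particular variable $y$ through the term, so that constant subterms cause no trouble. With this settled, Theorem~\ref{theorem: SI2} yields that $\B^{\ast}$ is subdirectly irreducible, which completes the proof.
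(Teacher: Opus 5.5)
Your proposal is correct and follows the route the paper intends: the paper only cites Lakser, and the corollary comes from combining the P\l onka decomposition of Theorem~\ref{th: regularization of st.irregular varieties} with Theorem~\ref{theorem: SI2}. The one extra ingredient is that a non-trivial algebra in a strongly irregular variety has no absorbing element, and your induction on terms (following the occurrence of $y$ in $f(x,y)\approx x$, so that constant subterms cause no trouble) proves exactly that.
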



\begin{corollary}
    Let $\V$ be a strongly irregular variety. Then $\A$ is simple in $R(\V)$ if and only if $\A$ is simple in $\V$ or $\A$ is the two-element semilattice. 
\end{corollary}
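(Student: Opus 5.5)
The plan is to derive the statement directly from Corollary \ref{corollary: irreducible1}, using two facts. First, simple algebras are in particular subdirectly irreducible. Second, whether an algebra is simple depends only on its congruence lattice, not on the variety in which it is regarded.

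For the direction from right to left there are two cases.
\begin{itemize}
\item If $\A$ is simple in $\V$, then $\A\in\V\subseteq R(\V)$ (Remark \ref{rem: fatto ovvio su R}). Since $Con(\A)$ is intrinsic to $\A$, the algebra $\A$ is simple in $R(\V)$ as well.
\item If $\A$ is the two-element semilattice, i.e.\ the two-element member of $\mathcal{SL}$, then $\A$ has exactly two congruences and so is simple. It lies in $R(\V)$ because $\mathcal{SL}=R(\mathcal{T})\subseteq R(\V)$ by Remark \ref{rem: fatto ovvio su R}. Equivalently, it is the P\l onka sum of two trivial algebras of $\V$ over $\mathbf{2}$, so Theorem \ref{th: regularization of st.irregular varieties} applies.
\end{itemize}

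For the direction from left to right, let $\A\in R(\V)$ be simple. Then $\A$ is subdirectly irreducible, so by Corollary \ref{corollary: irreducible1} one of two cases holds.
\begin{itemize}
\item In case (1), $\A\in\V$. Since $\A$ is simple, it is simple in $\V$ and we are done.
\item In case (2), $\A=\B^{\ast}$ for some (subdirectly irreducible) $\B\in\V$. Write the underlying semilattice as $\{i<j\}$, with $\A_i=\B$ and $\A_j=\{\infty\}$.
\end{itemize}

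In case (2) I will exhibit the congruence
\[
\theta=(B\times B)\cup\{(\infty,\infty)\}.
\]
To see that $\theta$ is a congruence, I show it is the kernel of the map $\pi\colon A\to\{i,j\}$ that sends each element to the index of its fiber. Here the target carries the two-element member of $\mathcal{SL}$: every operation of arity $n\ge1$ acts as the join of its arguments, and constants are interpreted as $i$. This map is a homomorphism by the definition of the P\l onka sum operations. Indeed, $g(a_1,\dots,a_n)$ lies in the fiber indexed by $i_1\vee\dots\vee i_n$, and constants lie in the fiber of the least element $i$. This verification is essentially the only computation needed, and it is routine.

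Since $\A$ is simple, $\theta\in\{\Delta_{\A},\nabla_{\A}\}$. The congruence $\theta$ separates $\infty$ from the elements of $B$, and $B\neq\emptyset$, so $\theta\neq\nabla_{\A}$. Hence $\theta=\Delta_{\A}$, which forces $|B|=1$. Then $\A$ has exactly two elements and $\pi$ is an isomorphism onto the two-element semilattice, so $\A$ is the two-element semilattice.

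I do not expect a real obstacle. The one point needing care is the presence of constants: I must check that they sit in the bottom fiber $\A_i$, so that $\pi$ respects them. This holds by the convention $c^{\A}:=c^{\A_{i_0}}$.
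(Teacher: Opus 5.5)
Your proposal is correct and follows the paper's proof. Both arguments reduce to Corollary \ref{corollary: irreducible1} and, in case (2), use the congruence $(B\times B)\cup\{(\infty,\infty)\}$ (the paper's $\nabla^{\B}\cup\{(\infty,\infty)\}$) to force $|B|=1$. Your extra checks — that this relation is the kernel of the projection onto the index semilattice, and the explicit treatment of the converse — fill in details the paper leaves as immediate.
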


\begin{proof}
Let $\A\in R(\V)$ be simple, then $\A$ is also subdirectly irreducible and by Corollary \ref{corollary: irreducible1} $\A$ is subdirectly irreducible, in particular simple, in $\V$ or there exists a subdirectly irreducible member $\B$ of $\V$ such that $\A=\B^{\ast}$. Observe that $\B$ can not be non-trivial, as otherwise the congruence $\nabla^{\B} \cup \{(\infty,\infty)\}\not \in \{\Delta^{\B^{\ast}}, \nabla^{\B^{\ast}}\}$, against the assumption that $\A$ is simple. Consequently, $\B$ is trivial and $\A$ is the two-element semilattice.
The converse is immediate.
\end{proof}

\section{Splittings in regular varieties}

\noindent The characterization of the lattice of subvarieties of a regularized varieties turns out to be very fruitful to describe \emph{splittings} in such a lattice, which is the content of this section.

Given a a lattice $\mathbf{L} = \langle L,\wedge,\vee\rangle$, we indicate by $\uparrow a$ and $\downarrow a$ (respectively) the upset and downset of $a$ (respectively) of an element $a$, that is $\uparrow a = \{x\in L\; |\; a\leq x\}$, $\downarrow a = \{x\in L\;|\; x\leq a\}$.
 Recall a pair of elements $a,b\in L$ (with $a\nleq b$) splits $\mathbf{L}$, if for every $ c\in L$, $a\leq c$ or $c\leq b$; in such a case $(a,b)$ is said to be a \emph{splitting pair} and $L = \uparrow a\;\cup\downarrow b$. It readily follows that $a$ is a join-irreducible element and $b$ is a meet-irreducible element. In case $\mathbf{L} = L(\V)$ is the lattice of subvarieties of a given variety $\V$, then, for every splitting pair $(\V_1 , \V_2)$, it holds that: 
\begin{itemize}
\item $\V_1 = V(\A)$, with $\A$ a subdirectly irreducible finitely generated algebra in $\V$ ($\A$ is called \emph{splitting algebra}); 
\item $\V_2$ is axiomatized (with respect to $\V$) by a single identity, called \emph{splitting identity}. 
\end{itemize}

\noindent Splitting algebras (in a variety) are connected with projective members\footnote{Sometimes they are also called \emph{weakly projective} (see e.g. \cite{HardingRomanowskaPart1}).} of a variety (see e.g. \cite{JipsenRose}). In particular, the following is a useful corollary of general result by A. Day (\cite{Day75}, \cite[Theorem 2.11]{JipsenRose}).
 
\begin{corollary}\label{cor: projective si are splitting}
Every projective subdirectly irreducible algebra $\mathbf{P}$ in a variety $\V$ is splitting in $\mathbf{L}(\V)$.
\end{corollary}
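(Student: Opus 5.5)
The plan is to exhibit the conjugate variety directly. I will show that $(V(\mathbf{P}),\mathcal{W}_{\mathbf{P}})$ is a splitting pair of $\mathbf{L}(\V)$, where
$$\mathcal{K}_{\mathbf{P}}=\{\B\in\V : \mathbf{P}\notin IS(\B)\}, \qquad \mathcal{W}_{\mathbf{P}}=HSP(\mathcal{K}_{\mathbf{P}}).$$
The whole argument rests on one key claim: for every class $\mathcal{K}\subseteq\V$, if $\mathbf{P}\in HSP(\mathcal{K})$ then $\mathbf{P}\in IS(\mathcal{K})$, i.e.\ $\mathbf{P}$ embeds into a single member of $\mathcal{K}$. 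This uses only that $\mathbf{P}$ is projective and subdirectly irreducible, with no finiteness assumption.

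To prove the key claim, suppose $\mathbf{P}\in HSP(\mathcal{K})$. Then there are $\mathbf{S}\le\prod_{k\in K}\B_k$ with $\B_k\in\mathcal{K}$ and a surjective homomorphism $\pi\colon\mathbf{S}\to\mathbf{P}$. Since $\mathbf{S}\in\V$, projectivity of $\mathbf{P}$ lets us lift $\mathrm{id}_{\mathbf{P}}$ along $\pi$. This gives $g\colon\mathbf{P}\to\mathbf{S}$ with $\pi\circ g=\mathrm{id}$, so $g$ is injective and $\mathbf{P}\in IS P(\mathcal{K})$. Next, let $e_k$ denote the composition of $g$ with the $k$-th projection. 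The kernels $\ker(e_k)$ intersect to $\Delta_{\mathbf{P}}$, and $\mathbf{P}$ is subdirectly irreducible, so $\Delta_{\mathbf{P}}$ is completely meet irreducible in $Con(\mathbf{P})$. Hence some $\ker(e_k)=\Delta_{\mathbf{P}}$, so $\mathbf{P}$ embeds in $\B_k$, which proves the claim.

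The splitting property then follows in three steps.
\begin{enumerate}
\item $\mathbf{P}\notin\mathcal{W}_{\mathbf{P}}$. Otherwise the key claim gives $\mathbf{P}\in IS(\mathcal{K}_{\mathbf{P}})$, contradicting the definition of $\mathcal{K}_{\mathbf{P}}$. Hence $V(\mathbf{P})\nsubseteq\mathcal{W}_{\mathbf{P}}$.
\item Let $\mathcal{U}\in L(\V)$ with $\mathbf{P}\in\mathcal{U}$. Then $V(\mathbf{P})\subseteq\mathcal{U}$.
\item Let $\mathcal{U}\in L(\V)$ with $\mathbf{P}\notin\mathcal{U}$. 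Since $\mathcal{U}$ is closed under $S$ and $I$, no $\B\in\mathcal{U}$ has $\mathbf{P}\in IS(\B)$. Thus $\mathcal{U}\subseteq\mathcal{K}_{\mathbf{P}}$, and so $\mathcal{U}\subseteq\mathcal{W}_{\mathbf{P}}$.
\end{enumerate}
Every subvariety therefore lies in $\uparrow V(\mathbf{P})\cup\downarrow\mathcal{W}_{\mathbf{P}}$, so $\mathbf{P}$ is splitting.

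The only delicate point is the key claim, specifically passing from $HSP$ to $S$. Projectivity removes the $H$, and subdirect irreducibility in its infinitary form removes the $P$. If projectivity is instead read as ``weakly projective'' (lifting only along surjections onto $\mathbf{P}$ itself), the argument is unchanged, since only the lift of $\mathrm{id}_{\mathbf{P}}$ along $\pi$ is used.
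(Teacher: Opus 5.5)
Your proof is correct. The paper gives no argument of its own here: it records the statement as a consequence of Day's theorem (Day 1975, in the form of Theorem 2.11 of Jipsen--Rose). What you have written is essentially the standard proof of that result, made explicit. Projectivity eliminates $H$, complete meet-irreducibility of $\Delta_{\mathbf{P}}$ eliminates $P$, and the conjugate variety is the one generated by all members of $\V$ into which $\mathbf{P}$ does not embed. Compared with the citation, your version is self-contained, visibly uses no finiteness assumption on $\mathbf{P}$, and exhibits the conjugate variety $\mathcal{W}_{\mathbf{P}}$ concretely.

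Two small remarks. First, in your key claim you never use $\pi\circ g=\mathrm{id}_{\mathbf{P}}$, only that $g$ is injective. So the argument works verbatim under an even weaker reading of the hypothesis: $\mathbf{P}\in H(\B)$ with $\B\in\V$ implies $\mathbf{P}\in IS(\B)$. This is essentially the weak projectivity the paper's footnote alludes to, and it is weaker than the retraction property you describe in your last paragraph. Second, you prove that $V(\mathbf{P})$ is the first component of a splitting pair, which is exactly what is claimed. The paper's remark that splitting algebras are finitely generated concerns a suitable generator of $V(\mathbf{P})$. Such a generator exists because $V(\mathbf{P})$, as the lower half of a splitting pair, is completely join-prime in $\Lg(\V)$. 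So you do not need $\mathbf{P}$ itself to be finitely generated.
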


Let us now introduce one of the main examples of splitting pairs in regular varieties.

\begin{example}\label{ex: SL and V}
For a strongly irregular variety $\V$, the pair ($\SL$, $\V$) is splitting in the lattice $\Lg(R(\V))$ (it follows directly from Theorem \ref{theorem: L}). Accordingly, $\SL$ is generated by the two-element semilattice and $\V$ is axiomatized by a single identity of the form $x\cdot y \approx x$, for some operation $x\cdot y$, playing the role of partition function in $R(\V)$ and witnessing the strong irregularity of $\V$ (see e.g. \cite{RomanowskaPlonka92}).
\end{example}

When $\V$ is a strongly irregular variety, $\mathbf{L}(R(\V))$ is essentially made of two isomorphic copies of $\Lg(\V)$ (by Theorem \ref{theorem: L}), i.e. $\Lg(R(\V)) = \Lg(\V) \cup \Lg_R$, where $\Lg_R$ is the isomorphic copy of $\Lg(\V)$ consisting of all the regularizations of the varieties in $L(\V)$, thus having $R(\V)$ and $\SL$ as top and minimum element, respectively. In particular, $\Lg(\V)\cong \Lg_R$, via the map $\Phi$ sending $\Phi(\mathcal{W})= R(\mathcal{W})$. These observations imply the following.


\begin{lemma}\label{lem: induced splitting}
Let $\V$ be a strongly irregular variety and $\V_1, \V_2 \in L(\V)$. Then the following are equivalent: 

\begin{itemize}
    \item[(i)] $(\V_1, \V_2)$ is a splitting pair in $\Lg(\V)$;
    \item[(ii)] $(R(\V_1), R(\V_2))$ is a splitting pair in $\Lg_R$;
    \item[(iii)] $(\V_1, R(\V_2))$ is a splitting pair in $\Lg(R(\V))$.
\end{itemize}
\end{lemma}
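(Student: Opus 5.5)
The argument rests on the description of $\Lg(R(\V))$ given by Theorem \ref{theorem: L}. There, $\phi\colon \Lg(\V)\times\mathbf{2}\to\Lg(R(\V))$ is an isomorphism with $\phi(\W,0)=\W$ and $\phi(\W,1)=R(\W)$. Hence for $\W,\W'\in L(\V)$ we have $\W\subseteq R(\W')$ iff $\W\subseteq \W'$, and $R(\W)\subseteq R(\W')$ iff $\W\subseteq\W'$. Also $R(\W)\not\subseteq \W'$ always holds, since $R(\W)$ is regular and $\W'$ is not. Every element of $L(R(\V))$ is either some $\mathcal{U}\in L(\V)$ (second coordinate $0$) or some $R(\mathcal{U})$ with $\mathcal{U}\in L(\V)$. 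With these order relations in hand, each implication reduces to case checking.

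(i)$\Rightarrow$(ii). The map $\Phi\colon \Lg(\V)\to\Lg_R$ is a lattice isomorphism. Being a splitting pair is a purely order-theoretic property, so it is preserved: from $\V_1\not\subseteq\V_2$ we get $R(\V_1)\not\subseteq R(\V_2)$. Every $c\in\Lg_R$ has the form $R(\mathcal{U})$, and $\V_1\subseteq\mathcal{U}$ or $\mathcal{U}\subseteq\V_2$ transfers to $R(\V_1)\subseteq R(\mathcal{U})$ or $R(\mathcal{U})\subseteq R(\V_2)$. The implication (ii)$\Rightarrow$(i) is the same argument run through $\Phi^{-1}$.

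(i)$\Rightarrow$(iii). First, $\V_1\not\subseteq R(\V_2)$, because $\V_1\subseteq R(\V_2)$ would force $\V_1\subseteq\V_2$. Now take $c\in L(R(\V))$. If $c=\mathcal{U}\in L(\V)$, then (i) gives $\V_1\subseteq \mathcal{U}$ or $\mathcal{U}\subseteq \V_2\subseteq R(\V_2)$. If $c=R(\mathcal{U})$, then (i) gives $\V_1\subseteq\mathcal{U}\subseteq R(\mathcal{U})$ or $\mathcal{U}\subseteq \V_2$, and in the latter case $R(\mathcal{U})\subseteq R(\V_2)$ by Remark \ref{rem: fatto ovvio su R}.

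(iii)$\Rightarrow$(i). From $\V_1\not\subseteq R(\V_2)$ we get $\V_1\not\subseteq\V_2$, since $\V_2\subseteq R(\V_2)$. For $\mathcal{U}\in L(\V)$, applying (iii) to $c=\mathcal{U}$ yields $\V_1\subseteq \mathcal{U}$ or $\mathcal{U}\subseteq R(\V_2)$, and the latter means $\mathcal{U}\subseteq \V_2$ by the order description of $\phi$.

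The only delicate point is the identity $\mathcal{U}\subseteq R(\V_2)\iff \mathcal{U}\subseteq\V_2$ for $\mathcal{U}\in L(\V)$. I would justify it either directly from the order on $\Lg(\V)\times\mathbf{2}$ via $\phi$, or by noting that $\mathcal{U}\cap R(\V_2)$ is the meet $\phi(\mathcal{U},0)\wedge\phi(\V_2,1)=\phi(\mathcal{U}\cap\V_2,0)=\mathcal{U}\cap\V_2$. This relies on $\phi$ being a lattice isomorphism, as established in Theorem \ref{theorem: L}.
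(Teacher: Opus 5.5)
Your proof is correct and takes the same route as the paper. The paper gives no separate argument: it states the lemma as a direct consequence of the two-copy description $\Lg(R(\V))=\Lg(\V)\cup\Lg_R$ from Theorem \ref{theorem: L}. Your case analysis via the product order on $\Lg(\V)\times\mathbf{2}$, including the key fact $\mathcal{U}\subseteq R(\V_2)\iff\mathcal{U}\subseteq\V_2$, is exactly the verification the paper leaves implicit.
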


 Finding splitting identities is, in general, a difficult task, achieved sometimes with ad hoc methods (see e.g. \cite{Jankov} for Heyting algebras). However, looking to $R(\V)$ might be of some help, as specified by the following. 
\begin{theorem}\label{th: splitting id}
Let $\V$ be a strongly irregular variety. Then for every splitting pair $(\V_1 , \V_2)$ in $\Lg(\V)$ there exists an equational basis of $\V$ such that the splitting identity is regular.
\end{theorem}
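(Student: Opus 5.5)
The plan is to transfer the splitting pair $(\V_1,\V_2)$ of $\Lg(\V)$ to the pair $(\V_1, R(\V_2))$ of $\Lg(R(\V))$ via Lemma \ref{lem: induced splitting}. There we read off a splitting identity for $R(\V_2)$ relative to $R(\V)$ which is automatically regular. Corollary \ref{cor: equational basis of V} then lets us rebuild an equational basis of $\V$ around that regular identity.

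First, by Lemma \ref{lem: induced splitting} ((i)$\Rightarrow$(iii)), $(\V_1,R(\V_2))$ is a splitting pair in $\Lg(R(\V))$. By the general facts on splittings recalled above, $R(\V_2)$ is axiomatized relative to $R(\V)$ by a single identity $\varepsilon$. Let $\Sigma$ be an equational basis of $R(\V)$, so that $\Sigma\cup\{\varepsilon\}$ axiomatizes $R(\V_2)$.

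Second, I argue that $\varepsilon$ is regular. Since $R(\V_2)$ is a regularized variety, it contains $\SL$ (equivalently, it satisfies only regular identities, via \cite[Proposition 2.1]{RomanowskaPlonka92}). As $\varepsilon$ holds in $R(\V_2)$, it must be regular.

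Third, I lift this to bases of $\V$ and $\V_2$. Fix the irregular identity $x\cdot y\approx x$ witnessing strong irregularity (Example \ref{ex: SL and V}). By Corollary \ref{cor: equational basis of V}, $\Sigma\cup\{x\cdot y\approx x\}$ is an equational basis of $\V$. I then need that adding $\varepsilon$ to this basis yields $\V_2$, i.e. that $\V\cap R(\V_2)=\V_2$. This is the step requiring care, and I will handle it by the P\l onka-sum description. An algebra of $\V$ lying in $R(\V_2)$ is, by Theorem \ref{th: regularization of st.irregular varieties}, a P\l onka sum of algebras in $\V_2$. Because it satisfies the irregular identity $x\cdot y\approx x$, Lemma \ref{lemma: non-trivial semilattice} forces its semilattice to be trivial, so the algebra is itself in $\V_2$. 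The reverse inclusion is Remark \ref{rem: fatto ovvio su R}. Alternatively, the lattice embedding of Theorem \ref{theorem: cover} gives $\V\wedge R(\V_2)=\V_2$ directly, since $\V_2\subseteq \V\cap R(\V_2)$ and $\V\cap R(\V_2)$ is a subvariety of $\V$ whose regularization lies in $R(\V_2)$.

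Hence $\Sigma\cup\{x\cdot y\approx x\}$ is an equational basis of $\V$ relative to which $\V_2$ is axiomatized by the single regular identity $\varepsilon$. Moreover $\V_1\not\models\varepsilon$: otherwise $\V_1\subseteq R(\V_2)$, contradicting the splitting in $\Lg(R(\V))$. So $\varepsilon$ is a splitting identity for $(\V_1,\V_2)$, which proves the claim.
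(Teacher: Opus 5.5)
Your proposal is correct and follows essentially the paper's route: pass to the splitting pair $(\V_1,R(\V_2))$ in $\Lg(R(\V))$, take its splitting identity $\varepsilon$ (necessarily regular, since $R(\V_2)$ is a regular variety), and show that $\Sigma\cup\{x\cdot y\approx x,\varepsilon\}$ axiomatizes $\V_2$. The only difference is the last step. You prove $\V\cap R(\V_2)=\V_2$, either via Lemma \ref{lemma: non-trivial semilattice} or via the injectivity in Theorem \ref{theorem: cover}, whereas the paper applies Corollary \ref{cor: equational basis of V} to $\V_2$. Both rest on the same underlying fact, and your version avoids the paper's unnecessary detour through the auxiliary term $t(x,y)$.
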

\begin{proof}
By Corollary \ref{cor: equational basis of V}, an equational basis for $\V$ is given by $\Sigma\cup\{x\cdot y \approx x\}$, where $\Sigma$ is an equational basis of the regularization $R(\V)$ and $x\cdot y$ is a binary term in the language of $\V$. Let $(\V_1, \V_2)$ be a splitting pair in $\V$. Then, by Lemma \ref{lem: induced splitting}, $(R(\V_1), R(\V_2))$ is a splitting pair in $\Lg_R$ (the lattice of the regular subvarieties of $\Lg(R)$). This implies that there exist a single regular identity $\rho\approx \tau$ axiomatizing $R(\V_2)$ with respect to $R(\V)$. Again by Corollary \ref{cor: equational basis of V}, there exists a binary term $t(x,y)$ (in the language of $\V$) such that $\Sigma\cup \{\rho\approx\tau\} \cup \{t(x,y)\approx x\}$ is an equational basis for $\V_2$. On the other hand, since $(\V_1 , \V_2)$ is splitting in $\V$, $\V_2$ is axiomatized by a single identity, with respect to $\V$: we claim that $\rho\approx\tau$ is such identity. 
Observe that $\V_2\models x\cdot y \approx x$ (since $\V_2 \subset \V$) and $\V_2\models t(x,y)\approx x$, thus $\V_2\models x\cdot y\approx t(x,y)$. This implies that an equational basis for $\V_2$ is given by $\Sigma\cup\{x\cdot y \approx x\}\cup\{\rho\approx\tau\}$, i.e. $\V_2$ is axiomatized by $\rho\approx\tau$ with respect to $\V$, namely $\rho\approx\tau$ is the splitting identity for $(\V_1,\V_2)$.
\end{proof}


In order to find the regular splitting identity obtained in Theorem \ref{th: splitting id}, one has to necessarily look for a specific equational basis of a strongly irregular variety $\V$ (see Corollary \ref{cor: equational basis of V}) consisting of the following: 
\begin{itemize}
    \item any equational basis $\Sigma$ of $R(\V)$; 
    \item the single identity $x\cdot y\approx x$ witnessing the strongly irregularity of $\V$.
\end{itemize}

Despite this might lead to consider unsual equational basis for a strongly irregular variety $\V$, the process is not complicated (see Examples \ref{ex: Boolean algebras}, \ref{ex: gruppi}  and \ref{ex: skew braces}). From the proof of Theorem \ref{th: splitting id} we immediately get. 
\begin{corollary}
Let $\V$ be a strongly irregular variety. Then the splitting pairs $(\V_1 , \V_2)$ in $\V$ and $(\V_1 , R(\V_2))$ in $R(\V)$ have the same (regular) splitting identity.
\end{corollary}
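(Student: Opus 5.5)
The plan is to read off the common identity from the proof of Theorem \ref{th: splitting id}, and to check directly that it axiomatizes both varieties in the appropriate sense.

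Fix a splitting pair $(\V_1,\V_2)$ in $\Lg(\V)$. Following the proof of Theorem \ref{th: splitting id}:
\begin{itemize}
\item By Lemma \ref{lem: induced splitting}, $(R(\V_1),R(\V_2))$ is a splitting pair in $\Lg_R$.
\item Hence there is a regular identity $\rho\approx\tau$ such that $R(\V_2)$ is axiomatized by $\Sigma\cup\{\rho\approx\tau\}$, where $\Sigma$ is a basis of $R(\V)$.
\item That proof already shows that $\rho\approx\tau$ axiomatizes $\V_2$ relative to $\V$, i.e.\ it is the splitting identity of $(\V_1,\V_2)$ in $\Lg(\V)$.
\end{itemize}
It remains to show that the same $\rho\approx\tau$ is the splitting identity of the pair $(\V_1,R(\V_2))$, which is a splitting pair in $\Lg(R(\V))$ by Lemma \ref{lem: induced splitting}(iii). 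This amounts to showing that $R(\V_2)$ is axiomatized by $\rho\approx\tau$ relative to $R(\V)$.

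For that, I would set $\mathcal{U}=\{\A\in R(\V):\A\models\rho\approx\tau\}$ and prove $\mathcal{U}=R(\V_2)$.

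The inclusion $R(\V_2)\subseteq\mathcal{U}$ is immediate. Indeed, $\V_2\models\rho\approx\tau$ and this identity is regular, so it lies among the regular identities of $\V_2$ and holds in $R(\V_2)$.

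For the converse, take $\A\in\mathcal{U}$. By Theorem \ref{th: regularization of st.irregular varieties}, $\A$ is a P\l onka sum of fibers $\A_i\in\V$.
\begin{itemize}
\item Each fiber satisfies $\rho\approx\tau$. In the constant-free case the fibers are subalgebras of $\A$. In general, a regular identity holding in a P\l onka sum holds in every fiber: evaluating all variables inside a single $A_i$ keeps the value inside $A_i$, by Lemma \ref{lemma: int}, and the computation there agrees with that in $\A_i$.
\item So each fiber lies in $\V$ and satisfies $\rho\approx\tau$, hence belongs to $\V_2$.
\item By Theorem \ref{th: regularization of st.irregular varieties} again, now applied to $\V_2$ (which is strongly irregular as a subvariety of $\V$), we get $\A\in R(\V_2)$.
\end{itemize}
This gives $\mathcal{U}=R(\V_2)$, so $\rho\approx\tau$ is a splitting identity for $(\V_1,R(\V_2))$.

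The main obstacle is the fiber step when the type has constants: the nullary operations are interpreted in $\A_{i_0}$, so one has to check that terms containing constants, evaluated at elements of $A_i$, are computed via the transition maps $p_{i_0 i}$ consistently with $\A_i$. I would handle this by induction on terms using the definition of the P\l onka sum operations.

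Finally, I would note that splitting identities are determined only up to relative equivalence. So "the same" identity means that a single identity serves both pairs, and the argument above exhibits one.
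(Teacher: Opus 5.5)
Your proposal is correct and is essentially the paper's argument, which reads the corollary directly off the proof of Theorem~\ref{th: splitting id}: there, $\rho\approx\tau$ is chosen as the single identity axiomatizing $R(\V_2)$ relative to $R(\V)$, and is then shown to axiomatize $\V_2$ relative to $\V$. Your P\l onka-sum verification that $\{\A\in R(\V):\A\models\rho\approx\tau\}=R(\V_2)$ is therefore redundant for this particular $\rho\approx\tau$, since you already had it by choice; it is nonetheless valid, and it shows the slightly stronger fact that any regular identity axiomatizing $\V_2$ relative to $\V$ also axiomatizes $R(\V_2)$ relative to $R(\V)$.
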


\begin{theorem}
    Let $\V$ be a strongly irregular variety. Then $(\V_1, \V_2)$ is a splitting pair in $\Lg(R(\V))$ if and only if $(\V_1, \V_2)=(\mathcal{SL}, \V)$ or there exists $\W \in \Lg(\V)$ such that $\V_2=R(\W)$ and $(\V_1, \W)$ is a splitting pair in $\Lg(\V)$.
\end{theorem}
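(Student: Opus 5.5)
The plan is to work entirely inside the description $\Lg(R(\V))\cong\Lg(\V)\times\mathbf{2}$ of Theorem \ref{theorem: L}. Here a subvariety $\W\in L(\V)$ corresponds to $(\W,0)$ and $R(\W)$ corresponds to $(\W,1)$. The order is componentwise, so $\W\subseteq R(\W')$ iff $\W\subseteq\W'$, and $R(\W)\nsubseteq\W'$ always holds. Splitting pairs in a product of lattices are well understood: for bounded lattices $\Lg_1\times\Lg_2$, a pair $((a_1,a_2),(b_1,b_2))$ splits the product iff either $(a_1,b_1)$ splits $\Lg_1$, $a_2=0$ and $b_2=1$, or $(a_2,b_2)$ splits $\Lg_2$, $a_1=0$ and $b_1=1$. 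I would prove this small lattice-theoretic fact directly, or just argue concretely in the two cases below.

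For the ``if'' direction, the pair $(\SL,\V)$ is handled by Example \ref{ex: SL and V}. In the product it is $((\mathcal T,1),(\V,0))$, i.e.\ the splitting of $\mathbf{2}$ padded with bottom and top. For the other case, if $(\V_1,\W)$ splits $\Lg(\V)$, then $(\V_1,R(\W))$ splits $\Lg(R(\V))$ directly by Lemma \ref{lem: induced splitting} ((i)$\Rightarrow$(iii)).

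For the ``only if'' direction, let $(\V_1,\V_2)$ split $\Lg(R(\V))$. Then $\V_1$ is join-irreducible and $\V_1\nsubseteq\V_2$. I distinguish cases on whether $\V_1$ is regular.

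\textbf{Case 1: $\V_1$ is regular}, so $\V_1=R(\W_1)$. Apply the splitting condition to $c=\V$. If $\V_1\subseteq\V$, this contradicts regularity, so $\V\subseteq\V_2$. Now take any $c=\W\in L(\V)$ with $\W\subseteq\V$ and use $c=R(\mathcal T)=\SL$. If $\SL\subseteq\V_2$, then $\V_2\supseteq\V\vee\SL=R(\V)$, contradicting $\V_1\nsubseteq\V_2$. Hence $\V_1\subseteq\SL$, and $\V_1$ regular forces $\V_1=\SL$. Then $\V_2$ must be the largest variety not containing $\SL$, i.e.\ the largest irregular one, which is $\V$. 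So $(\V_1,\V_2)=(\SL,\V)$.

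\textbf{Case 2: $\V_1\in L(\V)$ is irregular.} Test $c=\SL$. Since $\V_1\nsubseteq\SL$ (an irregular variety is not contained in a regular one other than trivially—indeed $\V_1\neq\mathcal T$ since $\V_1\nsubseteq\V_2$, and the only irregular subvariety of $\SL$ is $\mathcal T$), we get $\SL\subseteq\V_2$. Hence $\V_2$ is regular, say $\V_2=R(\W)$ with $\W\in L(\V)$. Then $\V_1\nsubseteq\W$ by the order description. For any $c\in L(\V)$, either $\V_1\subseteq c$ or $c\subseteq R(\W)$, and the latter gives $c\subseteq\W$ since $c$ is irregular. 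So $(\V_1,\W)$ splits $\Lg(\V)$. This is also Lemma \ref{lem: induced splitting} ((iii)$\Rightarrow$(i)).

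The main obstacle is in Case 1: pinning down $\V_1=\SL$ and $\V_2=\V$. This requires the facts that $\SL=R(\mathcal T)$ is the least regular subvariety and that every regular subvariety contains $\SL$. The latter is \cite[Proposition 2.1]{RomanowskaPlonka92}, which makes $\V\vee\SL=R(\V)$ available via the Corollary after Theorem \ref{theorem: L}. Once those are in place, the remaining cases are bookkeeping in the product order.
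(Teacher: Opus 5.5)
Your proof is correct and follows essentially the paper's route. You split on whether $\V_1$ is regular, test the splitting condition against $\V$ and $\SL$, and use Theorem~\ref{theorem: L}, Lemma~\ref{lem: induced splitting} and the fact that the regular subvarieties are exactly those containing $\SL$. Your product-lattice remark, that the only splitting pairs of $\Lg(\V)\times\mathbf{2}$ are those you list, would in fact give an even shorter derivation directly from Theorem~\ref{theorem: L}.
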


\begin{proof}
$(\Rightarrow)$. Let $(\V_1, \V_2)$ be a splitting pair in $\Lg(R(\V))$, that is $L(R(\V))=\uparrow \V_1\ \cup \downarrow \V_2$ with $\V_1 \not \subseteq \V_2$, i.e. $\uparrow \V_1\ \cap \downarrow \V_2 = \emptyset$. Let us distinguish two cases:\\
1) $\V_1$ is regular. Then we know that $\mathcal{SL} \subseteq \V_1$. Actually, it is the case that $\mathcal{SL}=\V$. Indeed, assume by contradiction that $\mathcal{SL} \subsetneq \V_1$ then, since the pair $(\V_1, \V_2)$ is splitting,  $\mathcal{SL} \in \downarrow \V_2$, that is $\V_2$ is also regular. Since $\V_1$ is regular we have (by Theorem \ref{theorem: L}) that $\V \not \in \uparrow \V_1$ (otherwise $\V$ would be regular), so $\V \in \downarrow \V_2$, and consequently $R(\V) \subseteq R(\V_2)=\V_2 \subseteq R(\V)$, therefore $\V_2=R(\V)$, which implies that $\V_1\cap\V_2 \neq \emptyset$, in contradiction with the hypothesis that $(\V_1, \V_2)$ is a splitting pair in $\Lg(R(\V))$. Therefore, $\mathcal{V}_1 = \mathcal{SL}$. Consequently, $\mathcal{SL} \not\in \downarrow \mathcal{V}_2$, which implies that $\mathcal{V}_2$ is irregular. On the other hand, $\mathcal{V} \not\in \uparrow \mathcal{V}_1$ (otherwise, $\mathcal{V}$ would be regular), hence $\mathcal{V} \in \downarrow \mathcal{V}_2$, so $\mathcal{V} \subseteq \mathcal{V}_2 \subseteq R(\mathcal{V})$, and thus, by Theorem \ref{theorem: cover}, $\mathcal{V}_2 = \mathcal{V}$. We have hence proved that $(\V_1, \V_2)=(\SL, \V)$.\\
\noindent
2) $\V_1$ is not regular. In this case, observe that $\mathcal{SL} \not\in \uparrow \mathcal{V}_1$. Indeed, if this were not the case, we would have $\mathcal{T} \subseteq \mathcal{V}_1 \subseteq \mathcal{SL}=R(\mathcal{T})$, and thus $\mathcal{V}_1=\mathcal{T}$, in contradiction with the hypothesis that $(\mathcal{V}_1, \mathcal{V}_2)$ is a splitting pair in $\Lg(R(\mathcal{V}))$. Consequently $\mathcal{SL} \in \downarrow \mathcal{V}_2$, hence $\mathcal{V}_2$ is regular. Then by Theorem \ref{theorem: L} there exists $\mathcal{W} \in L(\mathcal{V})$ such that $\mathcal{V}_2=R(\mathcal{W})$. Since by hypothesis $(\V_1, R(\W))$ is a splitting pair in $\Lg(R(\V))$ with $\V_1, \W \in \Lg(\V)$, by Lemma \ref{lem: induced splitting} we have that $(\V_1, \W)$ is a splitting pair in $\Lg(\V)$.

\noindent
$(\Leftarrow)$. It follows immediately from Lemma \ref{lem: induced splitting}.
\end{proof}

\noindent As an immediate consequence of the preceding theorem, we have the following results.

\begin{corollary}
    Let $\V$ be a strongly irregular variety. Then an algebra $\A$ is splitting in $\Lg(R(\V))$ if and only if it is splitting in $\Lg(\V)$.
\end{corollary}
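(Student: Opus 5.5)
The plan is to read the corollary off the preceding theorem together with Lemma \ref{lem: induced splitting}. Throughout, ``$\A$ is splitting in $\Lg(\mathcal{K})$'' means that $V(\A)$ is the first component of some splitting pair in $\Lg(\mathcal{K})$. For $\A\in\V$ the generated variety $V(\A)=HSP(\A)$ is the same class whether computed inside $\V$ or inside $R(\V)$, so no ambiguity arises.

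One caveat must be made explicit. By the preceding theorem (see also Example \ref{ex: SL and V}), the two-element semilattice $\mathbf{2}$ is splitting in $\Lg(R(\V))$ through the pair $(\SL,\V)$. But $\mathbf{2}\notin\V$, because it falsifies $x\cdot y\approx x$. So the statement has to be read for algebras $\A$ generating a variety different from $\SL$; equivalently, by Theorem \ref{theorem: L}, for $\A\in\V$. I will prove it in that form and note this exception.

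($\Leftarrow$) Let $\A$ be splitting in $\Lg(\V)$. Then there is $\W\in L(\V)$ such that $(V(\A),\W)$ is a splitting pair in $\Lg(\V)$. By Lemma \ref{lem: induced splitting}, (i)$\Rightarrow$(iii), the pair $(V(\A),R(\W))$ is splitting in $\Lg(R(\V))$. Hence $\A$ is splitting in $\Lg(R(\V))$.

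($\Rightarrow$) Let $(V(\A),\V_2)$ be a splitting pair in $\Lg(R(\V))$. By the preceding theorem, one of two cases holds.
\begin{enumerate}
\item $(V(\A),\V_2)=(\SL,\V)$. Then $V(\A)=\SL$, which is exactly the excluded case $\A\cong\mathbf{2}$ up to generating the same variety.
\item There is $\W\in L(\V)$ with $\V_2=R(\W)$ and $(V(\A),\W)$ a splitting pair in $\Lg(\V)$. In particular $V(\A)\in L(\V)$, so $\A\in\V$, and $\A$ is splitting in $\Lg(\V)$.
\end{enumerate}

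The only delicate point is this bookkeeping about the semilattice case. The rest is immediate from the preceding theorem and Lemma \ref{lem: induced splitting}.
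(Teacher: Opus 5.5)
Your proof is correct and is essentially the paper's: the corollary is read off from the preceding characterization of splitting pairs in $\Lg(R(\V))$, with Lemma \ref{lem: induced splitting} giving the direction from $\Lg(\V)$ to $\Lg(R(\V))$. Your caveat is also right: the splitting pair $(\SL,\V)$ makes the two-element semilattice splitting in $\Lg(R(\V))$ although it does not belong to $\V$, so the statement holds only when read for $\A\in\V$ (or for $V(\A)\neq\SL$), a restriction the paper leaves implicit.
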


\begin{corollary}
    Let $\A$ be a non-trivial subdirectly irreducible projective algebra in $\V$, then $\A$ is not projective in $R(\V)$.
\end{corollary}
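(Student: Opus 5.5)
The plan is to argue by contradiction, using the Płonka-sum description of $R(\V)$ (Theorem~\ref{th: regularization of st.irregular varieties}) together with the splitting results of this section. Suppose $\A$ is a non-trivial subdirectly irreducible algebra that is projective in $\V$ and also projective in $R(\V)$. The first step is the standard reduction: projectivity in a variety $\mathcal{K}$ is equivalent to being a retract of a free algebra $\mathbf{F}_{\mathcal{K}}(X)$. This gives homomorphisms $e\colon \A\to \mathbf{F}_{R(\V)}(X)$ and $r\colon \mathbf{F}_{R(\V)}(X)\to \A$ with $r\circ e=\mathrm{id}_A$.

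The second step is to locate $e(A)$ inside the Płonka-sum structure of $\mathbf{F}_{R(\V)}(X)$. This algebra is a Płonka sum whose fibers are indexed by the finite subsets of $X$, with least index $\emptyset$ when constants are present. Since $\A\models x\cdot y\approx x$, the same identity holds in $e(\A)$. By Lemma~\ref{lemma: non-trivial semilattice} and Lemma~\ref{lemma: int}, $e(A)$ is therefore contained in a single fiber $F_S$. When constants are present, this fiber must be the least one, $S=\emptyset$.

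The third step uses projectivity for a lifting problem rather than just the retract. I would choose a surjection $f\colon \mathbf{B}\to \mathbf{C}$ in $R(\V)$ and a homomorphism $g\colon \A\to\mathbf{C}$ built from $\A^{\ast}$ (Definition~\ref{def: Astar}) or from $\A\times\mathbf{2}$. The aim is to arrange that every homomorphism $\A\to\mathbf{B}$ must land in one fiber, while $f$ restricted to each fiber cannot cover $g(\A)$. A concrete candidate is $\mathbf{B}$ with two fibers $\mathbf{B}_{0}\leq \mathbf{B}_{1}$, where $\mathbf{B}_1=\A$ and $\mathbf{B}_0$ is an algebra of $\V$ onto which $\A$ maps only through the transition map. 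In parallel I would run the Day-style argument. By Corollary~\ref{cor: projective si are splitting}, $\A$ splits both $\Lg(\V)$ and $\Lg(R(\V))$. The conjugate variety in $\Lg(R(\V))$ must be $R(\W)$, where $(V(\A),\W)$ splits $\Lg(\V)$. Projectivity further identifies $R(\W)$ with $\{\mathbf{B}\in R(\V): \A\notin S(\mathbf{B})\}$. A contradiction would follow from an algebra $\mathbf{B}\in R(\V)$, for instance a Płonka sum with two fibers, such that $\A\in HS(\mathbf{B})$ but $\A\notin S(\mathbf{B})$. By the fiber analysis, checking $\A\notin S(\mathbf{B})$ reduces to checking that $\A$ embeds into no single fiber.

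I expect this last construction to be the main obstacle. Choosing $\mathbf{B}$ so that $\A$ is a quotient of a subalgebra but embeds into no fiber requires using both non-triviality and subdirect irreducibility of $\A$. The likely route is through the absence of absorbing elements, as in Theorem~\ref{theorem: SI2}. Care is also needed in the constant case, where a subalgebra is forced to meet the bottom fiber.
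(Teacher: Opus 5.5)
\textbf{The gap.} Your first two steps and the Day-style reduction are sound: if $\A$ were projective in $R(\V)$, any $\B\in R(\V)$ with $\A\in HS(\B)$ but $\A\notin S(\B)$ would give a contradiction. The gap is exactly the step you flag as the main obstacle, and it cannot be closed. Such a $\B$, or an unliftable lifting problem, does not exist in general, because the statement itself fails.

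\emph{Without constants.} Let $\A$ be generated by $a_1,\dots,a_n$ and projective in $\V$, and let $h\colon\B\to\A$ be onto with $\B\in R(\V)$.
Pick $b_k\in B_{l_k}$ with $h(b_k)=a_k$. Set $l=l_1\vee\dots\vee l_n$ and $b_k'=b_k\cdot b$ for some $b\in B_l$. Since $\A\models x\cdot y\approx x$, we still have $h(b_k')=a_k$.
So the subalgebra generated by the $b_k'$ lies inside the fiber $\B_l\in\V$ and maps onto $\A$. Projectivity in $\V$ then embeds $\A$ into it. Hence $\A\in HS(\B)$ always forces $\A\in S(\B)$; your two-fiber candidate with $\B_1=\A$ even contains $\A$ outright.

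Concretely, let $\V$ be the variety of lattices (strongly irregular via $x\wedge(x\vee y)\approx x$) and $\A$ the two-element lattice, which is simple and projective in $\V$. In $F_{R(\V)}(x,y)$, the elements $u=x\wedge(x\vee y)$ and $v=y\wedge(y\vee x)$ lie in the same fiber, which is a sublattice. So $\{u\wedge v,u\}$ is a subalgebra isomorphic to $\A$. The homomorphism induced by $x\mapsto 1$, $y\mapsto 0$ sends $u\mapsto 1$ and $u\wedge v\mapsto 0$. Hence $\A$ is a retract of a free algebra of $R(\V)$, i.e.\ projective in $R(\V)$.

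\emph{With constants.} Your bottom-fiber observation does yield something. If $r\circ e=\mathrm{id}_A$ with $e\colon\A\to F_{R(\V)}(X)$, then $e(A)$ is a subalgebra of $Sg(\emptyset)$ containing the constants. Hence $e(A)=Sg(\emptyset)$ and $\A\cong F_{\V}(\emptyset)$.

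This case is not excluded by the hypotheses. Identities between closed terms are regular, so $F_{R(\V)}(\emptyset)=F_{\V}(\emptyset)$, which is free and hence projective in $R(\V)$. For Boolean algebras, the two-element algebra $F_{\V}(\emptyset)$ is simple, non-trivial, and projective both in $\V$ and in involutive bisemilattices.

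\textbf{What your argument does prove.} Your approach establishes only a weaker statement:
\begin{itemize}
\item with constants, projective members of $\V$ other than $F_{\V}(\emptyset)$ are not projective in $R(\V)$;
\item without constants, non-finitely-generated projective members of $\V$ are not projective in $R(\V)$, since the retraction maps a finitely generated fiber onto $\A$.
\end{itemize}

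\textbf{The paper's route.} The paper's one-line justification, an ``immediate consequence'' of the classification of splitting pairs, cannot work either. By the preceding corollary, $\Lg(\V)$ and $\Lg(R(\V))$ have the same splitting algebras. So splitting information cannot separate projectivity in $\V$ from projectivity in $R(\V)$.
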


\section{Free algebras}\label{sec: algebre libere}

The characterization of free algebras in the regularization of a strongly irregular variety  is provided by Romanowska \cite{Romanowskafree} (extending a previous result by P\l onka \cite{Plonka71}), under the (standard) assumption that the algebraic language is plural, thus, in particular contains no constant symbols. The technique in \cite{Romanowskafree} actually works also for algebraic languages containing constants, as we point out in this section.  


Let $\mathbf{A}\in R(\V)$ a free algebra on the set of generators $\{a_j\}_{j\in J}$. Let $x\cdot y$ be a binary term witnessing the strong irregularity of $\mathcal{V}$, that is $\mathcal{V}\models x\cdot y \approx x$, and $J_0=\{k_1,...,k_n\}\subseteq J$, then for every $i\in \{1,...,n\}$ we set     

\begin{center}
\begin{equation}\label{eq: generators}
g_i=a_{k_i}\cdot^{\mathbf{A}} a_{k_2}\cdot^{\mathbf{A}}...\cdot^{\mathbf{A}}a_{k_{i-1}}\cdot^{\mathbf{A}}a_{k_1}\cdot^{\mathbf{A}}a_{k_{i+1}}\cdot^{\mathbf{A}}...\cdot^{\mathbf{A}}a_{k_n}.
\end{equation} 
\end{center}
\noindent

Let $A_{\emptyset}=Sg^\mathbf{A}(\emptyset)$, $A_{J_0}=Sg^{\tilde{\mathbf{A}}}(g_1,...,g_n)$, where $S_g$ is the \emph{generated subuniverse (closure) operator} (see \cite[Definition 3.1]{BuSa00}), and $\mathbf{A}_{J_0}$ be the algebra with universe $A_{J_0}$ such that 
$f^{\A_{J_0}}=f^{\widetilde{\A}}|_{A_{J_0}}$, for every non-nullary operation $f$, and 
$c^{\mathbf{A}_{J_0}}= c^{\A}\cdot^{\mathbf{A}} g_j$, for every constant $c$ and $g_j$ an arbitrary element in $\{g_1,...,g_n\}$.  
Given a set $J$, by $\mathcal{P}_{fin}(J)$ we will mean the collection of all finite subsets of $J$, namely $\mathcal{P}_{fin}(J) = \{J_0 \subseteq J\;:\; J \text{ is finite} \}$.

\begin{lemma}\cite[Lemma 1]{Plonka71}\label{lemma: Lemma 1 libere}
For any $ J_0\in \mathcal{P}_{fin}(J)$, $ \A_{J_0}$ is a $\V$-free algebra.
\end{lemma}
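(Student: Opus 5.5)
The plan is to realize $\A_{J_0}$ inside one fiber of the P\l onka decomposition of $\A$, and then to derive the universal property of $\A_{J_0}$ from that of $\A$ by composing with suitable assignments into algebras of $\V$.

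First, I locate $\A_{J_0}$ inside a fiber. Since $\A\in R(\V)$ and $\V$ is strongly irregular, Theorem \ref{th: regularization of st.irregular varieties} applies. I will use that $x\cdot y$ (or the partition function term built from it) induces the P\l onka decomposition $\A=\PLA$ given by Theorem \ref{th: Plonka dec. theorem}, with all fibers in $\V$. From the regular identities $x\cdot x\approx x$, $x\cdot(y\cdot z)\approx(x\cdot y)\cdot z$ and $x\cdot y\cdot z\approx x\cdot z\cdot y$, which hold in $\V$ and hence in $R(\V)$, I check that every $g_i$ lies in the same fiber $\A_s$. Here $s$ is the index of the term $a_{k_1}\cdot\ldots\cdot a_{k_n}$, i.e. the join of the indices of $a_{k_1},\dots,a_{k_n}$. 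I also check that $g_i\cdot g_l=g_i$ for all $i,l$. By Lemma \ref{lemma: int}, the non-nullary subuniverse generated by the $g_i$ stays inside $A_s$.

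Next, I treat the constants. For any $j$, the element $c^{\A}\cdot g_j$ equals $p_{i_0 s}(c^{\A})=c^{\A_s}$, so it does not depend on the choice of $j$. Hence $\A_{J_0}$ is a subalgebra of the fiber $\A_s\in\V$, and therefore $\A_{J_0}\in\V$. This step is where I expect the main obstacle. One has to make sure that $c^{\A_s}$ really lies in $Sg^{\tilde\A}(g_1,\dots,g_n)$, or else read $A_{J_0}$ as the subuniverse of $\A_s$ generated by the $g_i$ together with the constants $c^{\A_s}$. The first thing I would try is the following observation. Freeness of $\A$ means that, for any $c$, the element $c^{\A}\cdot a_{k_1}\cdots a_{k_n}$ is the value of a term in the generators, and $c^{\A_s}=p_{i_0 s}(c^{\A})$. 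I would then attempt to express $c\cdot x$ in $\V$ through non-nullary operations applied to $x$. If that attempt fails, I would adopt the second reading of $A_{J_0}$; the remaining argument is unaffected by this choice.

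Finally, I prove freeness. Let $\B\in\V$ and let $b_1,\dots,b_n\in B$. I define an assignment on the free generators of $\A$ by $a_{k_i}\mapsto b_i$, sending every other $a_j$ to $b_1$. Since $\B\in\V\subseteq R(\V)$, this assignment extends to a homomorphism $h\colon\A\to\B$. Because $\B\models x\cdot y\approx x$, I obtain
\[
h(g_i)=b_i\cdot b_2\cdots b_n=b_i ,
\]
and similarly $h(c^{\A}\cdot g_j)=c^{\B}\cdot b_j=c^{\B}$. Hence the restriction of $h$ to $A_{J_0}$ preserves all operations, including the constants of $\A_{J_0}$. Uniqueness of this restriction follows because $\A_{J_0}$ is generated by $g_1,\dots,g_n$. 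To show that the $g_i$ are distinct free generators, I take $\B$ to be the $\V$-free algebra on $n$ generators. Consequently $\A_{J_0}$ is $\V$-free over $\{g_1,\dots,g_n\}$.
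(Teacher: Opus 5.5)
Your proposal is correct and follows the paper's route. The paper only asserts that $\A_{J_0}$ has the universal mapping property in $\V$, and your argument carries out exactly that check: you restrict the homomorphism $h\colon\A\to\B$ induced by $a_{k_i}\mapsto b_i$, and use $\B\models x\cdot y\approx x$ to get $h(g_i)=b_i$ and $h(c^{\A}\cdot g_j)=c^{\B}$.

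Your first option for handling the constants cannot work in general. For bounded lattices, no constant-free term equals $0$, and $Sg^{\widetilde{\A}}(a_{k_1})=\{a_{k_1}\}$ does not contain $0^{\A}\cdot a_{k_1}$, so the paper's literal definition of $\A_{J_0}$ does not even yield an algebra. You must therefore commit to your second reading. It gives the whole fiber $\A_s$, which is what Lemma \ref{lemma: Lemma 2 libere} needs. Also, when $J_0=\emptyset$ there is no $b_1$, so send the generators to an arbitrary element of $B$ instead.
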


\begin{proof}
It is not difficult to check that $ \A_{J_0}$ has the universal mapping property in $\V$.
\end{proof}


\begin{lemma}\cite[Lemma 2]{Plonka71}\label{lemma: Lemma 2 libere}
Let $\mathcal{V}$ be a strongly irregular variety and $\mathbf{A}$ be a free algebra in $R(\mathcal{V})$. Then $\mathbf{A}$ is a P\l onka sum over a semilattice direct system whose index semilattice is $(\mathcal{P}_{fin}(J),\subseteq)$ and whose fibers are $\{\mathbf{A}_{J_0}\}_{J_0\in \mathcal{P}_{fin}(J)}$.
\end{lemma}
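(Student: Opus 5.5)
The plan is to use the P\l onka decomposition theorem (Theorem \ref{th: Plonka dec. theorem}) with the partition function $a\odot b := a\cdot^{\A} b$. Since $\A\in R(\V)$, and the identities (PF1)--(PF6) for the term $x\cdot y$ are regular identities holding in $\V$ (there they reduce to trivialities, as $x\cdot y\approx x$), they hold in $R(\V)$. Hence $\cdot^{\A}$ is a partition function on $\A$, and $\A$ is the P\l onka sum of its fibers $\{\A_i\}_{i\in I}$. For (PF6) note that $x\cdot c\approx x$ is regular, since $c$ contributes no variables.

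The work then consists in identifying the index semilattice and the fibers. To each element $a\in A$ assign $\mathrm{var}(a)$. Since $\A$ is free on $\{a_j\}$, $a=t^{\A}(a_{k_1},\dots,a_{k_n})$ for some term $t$, and we set $\mathrm{var}(a)=\{k_1,\dots,k_n\}$, the variables actually occurring in $t$. This is well defined: two representing terms give an identity valid in $R(\V)$, which must be regular because $R(\V)$ satisfies only regular identities (it contains $\SL$); hence the two terms have the same variables.

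Next I would show that $a,b$ lie in the same $\odot$-fiber iff $\mathrm{var}(a)=\mathrm{var}(b)$. Suppose the variable sets agree. Then $t_a\cdot t_b\approx t_a$ holds in $\V$ and is regular, so it holds in $R(\V)$, giving $a\odot b=a$; symmetrically $b\odot a=b$. Conversely, if $a\odot b=a$ then the term $t_a\cdot t_b$ represents $a$, so by well-definedness $\mathrm{var}(b)\subseteq\mathrm{var}(a)$. Thus the fibers are indexed by $\mathcal{P}_{fin}(J)$, with the order matching $\subseteq$ and join matching union. Every finite set is realized, via the element $a_{k_1}\cdot\ldots\cdot a_{k_n}$. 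The empty set is realized too: constants give the fiber of elements with no variables, and this is exactly $A_\emptyset=Sg^{\A}(\emptyset)$, which is the least index $i_0$.

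Finally I would check that the fiber indexed by $J_0=\{k_1,\dots,k_n\}$ equals $A_{J_0}=Sg^{\widetilde{\A}}(g_1,\dots,g_n)$, with the constants interpreted as $c^{\A}\cdot g_j$. First, each $g_i$ has variable set $J_0$, so it lies in that fiber. By Lemma \ref{lemma: int}, closure under non-nullary operations keeps us inside the fiber, so $A_{J_0}$ is contained in it. For the reverse inclusion, take $a=t(a_{k_1},\dots,a_{k_n})$ with variable set $J_0$. I would show $a=t'(g_1,\dots,g_n)$, where $t'$ is obtained from $t$ by replacing each constant $c$ by $c\cdot g_1$. The argument is that $a=a\odot g_1$ (same fiber), and by (PF4) the element $\odot g_1$ distributes down to leaves. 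At the leaves, $a_{k_i}\odot g_1=g_i$ up to reordering via (PF2),(PF3). At the constants, $c\odot g_1$ is exactly the interpreted constant of $\A_{J_0}$. The choice of $g_j$ there is immaterial, because every $g_j$ has the same variable set and so $c\cdot g_j = c\cdot g_1$ in $\A$. This also shows the constants in the fiber agree with the recipe defining $\A_{J_0}$, and that the transition maps $p_{ij}$ are $x\mapsto x\cdot b$.

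The main obstacle is this last identification of fibers with the algebras $\A_{J_0}$ in the presence of constants. The difficulty is that constants live in $A_\emptyset$, not in $A_{J_0}$, so the fibers are not subalgebras of $\A$. The key point to handle carefully is the leaf computation $a_{k_i}\odot g_1 = g_i$ together with the constant case.
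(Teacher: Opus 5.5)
Your route is the paper's: take $\cdot^{\A}$ as the partition function and verify, via the P\l onka decomposition theorem, that its classes are the sets $A_{J_0}$. The paper only asserts this check, and you carry it out. Your invariant $\mathrm{var}(a)$ is well defined because $R(\V)$ satisfies only regular identities. It correctly yields:
\begin{itemize}
\item the index semilattice $(\mathcal{P}_{fin}(J),\subseteq)$, with its order and joins;
\item the least fiber $Sg^{\A}(\emptyset)$;
\item the inclusion of $Sg^{\widetilde{\A}}(g_1,\dots,g_n)$ in the fiber over $J_0$;
\item the independence of $c^{\A}\cdot g_j$ from $j$.
\end{itemize}
In the leaf step $a_{k_i}\cdot g_1=g_i$ you also need (PF1) to absorb the repeated factor. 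Alternatively, note that $x_{k_i}\cdot g_1\approx g_i$ is a regular identity valid in $\V$.

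The step that does not go through is the constant case of the reverse inclusion, exactly where you placed the difficulty. Replacing each constant $c$ by $c\cdot g_1$ only shows that the fiber over $J_0$ is the closure of $\{g_1,\dots,g_n\}\cup\{c^{\A}\cdot g_1 : c\in\tau\}$ under the non-nullary operations. Calling $c^{\A}\cdot g_1$ ``the interpreted constant of $\A_{J_0}$'' presupposes $c^{\A}\cdot g_1\in Sg^{\widetilde{\A}}(g_1,\dots,g_n)$, which is what must be shown.

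By freeness and regularity, this membership holds for a constant $c$ exactly when $\V\models c\approx s(x)$ for some constant-free term $s$. That is the case in groups ($e\approx x\cdot x^{-1}$) and in Boolean algebras ($0\approx x\wedge\neg x$), but it can fail. Take type $(2,0)$ and let $\V$ be axiomatized by $x\cdot y\approx x$. The free $R(\V)$-algebra on $\{a_1\}$ has fiber $\{a_1, c\cdot a_1\}$ over $\{1\}$; these are two distinct elements, since $\V\not\models c\approx x$. Yet $Sg^{\widetilde{\A}}(a_1)=\{a_1\}$, which does not even contain the prescribed constant $c\cdot a_1$.

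The paper's definition of $\A_{J_0}$ tacitly makes the same assumption, so its sketch does not resolve this point either. To close the gap, you have two options:
\begin{itemize}
\item take the universe of $\A_{J_0}$ to be $\{a\in A:\mathrm{var}(a)=J_0\}=Sg^{\widetilde{\A}}(\{g_1,\dots,g_n\}\cup\{c^{\A}\cdot g_1 : c\in\tau\})$, after which your argument is complete;
\item or add the hypothesis on constants just stated.
\end{itemize}
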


\begin{proof}
Since $\mathbf{A}\in R(\V)$, by Theorem \ref{th: regularization of st.irregular varieties} $\A$ decomposes as a P\l onka sum over a semilattice direct system $\mathbb{B}=(\{\B_l\}_{l\in L},(L,\le^L),\{q_{u,v}\}_{u\le^L v})$ of algebras in $\mathcal{V}$. It is enough to check that the operation $\cdot^{\mathbf{A}}$ (which acts as a partition function on $\A$) provides exactly the P\l onka sum decomposition of $\A$ into $\mathbb{B}$, namely that two elements $a,b\in A$ belongs to $A_{J_0}$ (with $J_0\in \mathcal{P}_{fin}(J)$) iff $a\sim b$, i.e. $a\cdot^{\mathbf{A}} b = a$ and $b\cdot^{\mathbf{A}} a = b$. (see Theorem \ref{th: Plonka dec. theorem}). Observe that we are including the subset $J_0 = \emptyset$, since the constant operations of $\A$ belongs to $A_{\emptyset}$. 
\end{proof}

The following states an immediate consequence of the previous lemmas under the assumption that every finitely generated free algebra in $\V$ is finite. 

\begin{corollary}\label{corollary: Libere3}
Let 
$\A_n$ be the $n$-generated $R(\V)$-free algebra ($n\in \mathbb{N}$). Then

\begin{equation}\label{eq: elementi di una libera finitamente generata}
|A_n|=\sum_{j=0}^n{\binom{n}{j}}|B_j|,
\end{equation}
where $\B_j$ is the (finite) $\mathcal{V}$-free algebra on $j$ generators. In particular, if $\V$ is a strongly irregular locally finite variety then $R(\V)$ is locally finite.
\end{corollary}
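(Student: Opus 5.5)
The plan is to read the count off directly from the P\l onka sum decomposition of the free algebra in Lemma \ref{lemma: Lemma 2 libere}. Take $\A_n$ to be the $R(\V)$-free algebra on the generators $\{a_1,\dots,a_n\}$, so that the index set is $J=\{1,\dots,n\}$. By Lemma \ref{lemma: Lemma 2 libere}, $\A_n$ is the P\l onka sum of the system whose index semilattice is $(\mathcal{P}_{fin}(J),\subseteq)$, which here is the whole power set $\mathcal{P}(J)$, and whose fibers are the algebras $\A_{J_0}$. The universe of a P\l onka sum is the disjoint union of the universes of its fibers. Hence
\[
|A_n|=\sum_{J_0\subseteq J}|A_{J_0}|.
\]
This sum includes the term for $J_0=\emptyset$, namely $A_\emptyset=Sg^{\A}(\emptyset)$, which holds the constants.

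Next, for each $J_0$ with $|J_0|=j$, I will invoke Lemma \ref{lemma: Lemma 1 libere}: $\A_{J_0}$ is $\V$-free. The point to check is that it is free on exactly $j$ generators, namely $g_1,\dots,g_j$ from \eqref{eq: generators}, and that these are pairwise distinct. Distinctness should follow by mapping to a nontrivial algebra of $\V$, where $g_i$ reduces to $a_{k_i}$ because of $x\cdot y\approx x$. Granting this, $\A_{J_0}\cong\B_j$ and so $|A_{J_0}|=|B_j|$.

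The case $j=0$ needs separate care. Here $A_\emptyset=Sg^\A(\emptyset)$ should be the $0$-generated $\V$-free algebra $\B_0$. This is empty if $\tau$ has no constants, which is consistent with the formula. If $\tau$ does have constants, $A_\emptyset$ is the algebra of constant terms, and in $R(\V)$ two such terms are identified exactly when $\V$ identifies them, since every identity without variables is regular.

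Grouping the $\binom{n}{j}$ subsets of each size $j$ then gives \eqref{eq: elementi di una libera finitamente generata}.

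For local finiteness, suppose $\V$ is strongly irregular and locally finite. Then each $\B_j$ is finite, so $A_n$ is finite by the formula. A variety in which every finitely generated free algebra is finite is locally finite, because every $n$-generated member is a homomorphic image of $\A_n$. So $R(\V)$ is locally finite.

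The main obstacle is the identification $\A_{J_0}\cong\B_j$ with exactly $j$ free generators; the delicate case is the constant/empty fiber, where Lemma \ref{lemma: Lemma 1 libere} gives freeness but the rank has to be checked directly.
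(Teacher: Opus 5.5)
Your proposal is correct and follows the same route as the paper. The universe of $\A_n$ is the disjoint union of the fibers $A_{J_0}$, $J_0\subseteq\{1,\dots,n\}$, from Lemma \ref{lemma: Lemma 2 libere}; each fiber with $|J_0|=j$ is matched with $\B_j$ via Lemma \ref{lemma: Lemma 1 libere}; and local finiteness follows because every finitely generated member of $R(\V)$ is a homomorphic image of a finite free algebra. Your extra checks only make explicit what the paper leaves implicit: the distinctness of the $g_i$ via a nontrivial algebra of $\V$, and the ground-term argument that $|A_\emptyset|=|B_0|$ because identities without variables are regular.
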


\begin{proof}
    The first part of the corollary is an immediate consequence of lemmas \ref{lemma: Lemma 1 libere} and \ref{lemma: Lemma 2 libere}. As for the second part, by \cite[Theorem 10.15]{BuSa00} $R(\V)$ is locally finite if and only if each finitely generated $R(\V)$-free algebra is finite, but this follows immediately from the first part, since $\V$ is (by hypothesis) locally finite, so each of its finitely generated free algebra is locally finite.
\end{proof}





As a conseguence of Lemmas \ref{lemma: Lemma 1 libere} and \ref{lemma: Lemma 2 libere} we have that each free algebra in the regularization of a strongly irregular variety $\V$ in an algebraic language containing constants is a P\l onka sum over a semilattice direct system with zero of finitely generated free algebras in $\V$. However, this condition is necessary but not sufficient for an algebra $\A$ in the regularization $R(\mathcal{V})$ of $\mathcal{V}$ to be free, as the following example displays.

\begin{example}\label{ex: countcliff}

Let $I=\{0,1,2,3\}$ be the four-element lattice whose order $\le$ is $0 < 1,2 < 3$ ($1, 2$ incomparable).
Clearly $(I,\le)$ is the free (join) semilattice with zero on the set of generators $\{1,2\}$. Let $\G_i$ be the free group on the set of generator $\{x_{i}\}$ for every $i\in \{1,2,3\}$, while $\G_0$ is the trivial group (i.e. the free group on the empty set of generators). Let $\mathbf{C}$ be the P\l onka sum of the family $(\G_i)_{i\in I}$ over the semilattice $(I,\le)$ with homomorphisms $p_{13}$ and $p_{23}$ defined as the unique homomorphisms (extending the map) such that $p_{13}(x_{1})=x_{3}$ and $p_{23}(x_{2})=x_{3}$ (the remaining homomorphisms are defined in the obvious way). 

    \begin{center}
        \begin{tikzpicture}[node distance=2cm, auto]
    \node (A3) at (0, 2)   {$\G_3 := F_{\mathcal{G}}(\{x_{3}\})$};
    \node (A1) at (-2, 0)  {$\G_1 := F_{\mathcal{G}}(\{x_{1}\})$};
    \node (A2) at (2, 0)   {$\G_2 := F_{\mathcal{G}}(\{x_{2}\})$};
    \node (A0) at (0, -2)  {$\G_0 := F_{\mathcal{G}}(\emptyset)$};

    \draw[->] (A0) -- (A1);
    \draw[->] (A0) -- (A2);
    \draw[->] (A1) -- (A3);
    \draw[->] (A2) -- (A3);
\end{tikzpicture}
    \end{center}

Therefore $\mathbf{C}$ is a P\l onka sum of free groups over the free (join) semilattice on two generators. But $\mathbf{C}$ is not a free algebra in $R(\mathcal{G})$, that is $\mathbf{C}$ is not a free Clifford semigroup. Indeed, suppose on the contrary that this is the case, then by \eqref{eq: generators} $\mathbf{C}$ would be free on the set of generators $\{x_{1}, x_{2}\}$. Upon indicating with $\cdot$ the multiplication in $\mathbf{C}$, we have $x_{1}\cdot^{\mathbf{C}} x_{2}=p_{13}(x_{1})\cdot^{\G_{3}} p_{23}(x_{2})=x_3\cdot^{\G_{3}} x_3=p_{23}(x_{2})\cdot ^{\G_{3}}p_{13}(x_{1})=x_{2}\cdot^{\mathbf{C}}x_{1}$, and since $x_{1}$, $x_{2}$ are generators of $\mathbf{C}$, we would have that the variety of Clifford semigroups would satisfy the identity $x_{1} \cdot x_{2} \approx x_{2}\cdot x_{1}$, in contradiction with the fact that not every Clifford semigroup is commutative.    
\end{example}


\begin{theorem}\label{th: caratterizzazione libere}
Let $\mathbf{A}$ be an algebra in $R(\mathcal{V})$, with $\mathcal{V}$ a strongly irregular variety. Then
$\mathbf{A}\in R(\V)$ is free on the set of generators $\{a_j\}_{j\in J}$ iff $\mathbf{A}=\PLA$ 
with $\mathbb{A} = (\{\mathbf{A}_i\}_{i\in I}, (I,\le), \{p_{ij}\}_{i\le j})$ such that:

\begin{enumerate}
    \item $(I,\le)$ is the free semilattice with zero on the set of generators $J$;
    
\item    for every $ i\in I, \mathbf{A}_i$ is a finitely generated $\V$-free algebra. 
In particular $\A_i$ has exactly $n$ generators $G^i = \{g^{i}_1, \dots, g^{i}_n\}$ iff $i = i_1\vee\dots\vee i_n$,  for $i_1,\dots,i_n\in I$ ($i_{k}\neq i_{m}$ for $k\neq m$)
, and if $i\in J$ then $\A_i$ the free algebra generated by the element $g^{i}_{1} = a_{i_1} = a_i$;

    \item for every $ i\in I$, $p_{{i_0}i}$ is the unique homomorphism from $\mathbf{A}_{i_0}$ into $\mathbf{A}_i$, where $i_0$ is the least element of $I$, while for every $ i_1,...,i_n \in J$ ($\forall n\in \mathbb{N}^+$) if $i=i_1\vee...\vee...\vee i_n$ then for every $ j\in \{1,...,n\}: p_{i_ji}:\mathbf{A}_{i_j} \rightarrow \mathbf{A}_{i}$ is the unique injective homomorphism extending the (injective) map $p^{0}_{i_ji}\colon \{a_{i_j}\}\to \A_i$, $a_{i_j}\mapsto p^{0}_{i_ji}(g^{i}_{j}) := g^{k}_{j}$. In particular, for $i_{0}\neq i\leq k$, $p_{ik}\colon \mathbf{A}_i\to\mathbf{A}_k$ is the unique injective homomorphism extending the (injective) map $p^{0}_{ik}\colon G^{i}\to\mathbf{A}_k$, $g^{i}_{j}\mapsto p^{0}_{ik}(g^{i}_{j}) := g^{k}_{j}$.
\end{enumerate}
\end{theorem}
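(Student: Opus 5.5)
The forward direction follows from Lemmas \ref{lemma: Lemma 1 libere} and \ref{lemma: Lemma 2 libere}; the converse will be proved by checking the universal mapping property directly.

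\emph{Forward direction.} Suppose $\A$ is $R(\V)$-free on $\{a_j\}_{j\in J}$. By Lemma \ref{lemma: Lemma 2 libere}, $\A=\PLA$ over the index semilattice $(\mathcal{P}_{fin}(J),\subseteq)$, which is the free join semilattice with zero on $J$; this gives (1). The fibers are the algebras $\A_{J_0}$, which are $\V$-free by Lemma \ref{lemma: Lemma 1 libere}. Their free generators are the elements $g_1,\dots,g_n$ of \eqref{eq: generators}, and there are exactly $|J_0|$ of them. For a singleton $J_0=\{j\}$ we have $g_1=a_j$, and for $J_0=\emptyset$ the fiber is $Sg^{\A}(\emptyset)$. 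This gives (2). For (3), take $J_0\subseteq J_1$. The transition map is $x\mapsto x\odot b$ with $b\in A_{J_1}$, where $\odot$ is the partition function $\cdot^{\A}$. Using (PF2), (PF3) and the idempotency of $\cdot$ inside a fiber, one computes $g^{J_0}_j\cdot b=g^{J_1}_{j'}$, where $j'$ is the matching index. So $p_{J_0J_1}$ sends free generators to free generators. Since it is determined by its values on generators, it is the unique homomorphism extending that assignment. It is injective because its image is a subalgebra generated by a subset of a free basis of the $\V$-free algebra $\A_{J_1}$. For $J_0=\emptyset$, uniqueness holds because $\A_\emptyset$ is generated by the constants.

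\emph{Converse.} Now suppose $\A=\PLA$ satisfies (1)--(3). Take $\B\in R(\V)$ and a map $h_0\colon\{a_j\}\to B$. Write $\B=\PL(\mathbb{B})$ over a semilattice $K$ with fibers $\B_k$, using Theorem \ref{th: regularization of st.irregular varieties}, and let $\kappa(j)$ be the index of $h_0(a_j)$.
\begin{enumerate}
\item By freeness in (1), $\kappa$ extends uniquely to a zero-preserving semilattice homomorphism $\varphi\colon I\to K$. Zero is preserved because constants of $\B$ lie in the least fiber.
\item For each $i=i_1\vee\dots\vee i_n$, let $f_i\colon\A_i\to\B_{\varphi(i)}$ be the unique homomorphism sending $g^i_m\mapsto q_{\kappa(i_m)\varphi(i)}(h_0(a_{i_m}))$, which exists by $\V$-freeness of $\A_i$. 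For $i_0$, take $f_{i_0}$ to be the unique homomorphism from the initial $\V$-algebra $\A_{i_0}$.
\item Show that $\langle\varphi,\{f_i\}\rangle$ is a morphism of semilattice direct systems, i.e.\ $q_{\varphi(i)\varphi(k)}\circ f_i=f_k\circ p_{ik}$. Both sides are homomorphisms out of a free algebra, so it suffices to compare them on the generators $g^i_m$. On these, (3) together with the functoriality $q_{ab}q_{ba'}$ of transition maps makes the two sides agree.
\item By the categorical equivalence recalled after Theorem \ref{th: regularization of st.irregular varieties}, this morphism induces a homomorphism $h\colon\A\to\B$, defined by $h|_{A_i}=f_i$, extending $h_0$.
\item Uniqueness follows because $\A$ is generated by the $a_j$. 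Every $g^i_m$ is a $\cdot$-product of the $a_j$, and constants are in the signature.
\end{enumerate}

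The main obstacle is keeping the indexing of generators straight. In the forward direction this means verifying, via the identities (PF1)--(PF6), that $p_{ik}$ really maps $g^i_j$ to $g^k_j$ and is injective. Constants also need care: the fiber $\A_{J_0}$ carries constants $c\cdot g_j$, and one must check that these are preserved by the $p$'s and by the $f_i$. I expect (PF6) and (PF4) to handle both points.
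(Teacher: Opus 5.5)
Your proposal is correct and takes essentially the same route as the paper. The forward direction goes through Lemmas \ref{lemma: Lemma 1 libere} and \ref{lemma: Lemma 2 libere}, and the converse checks the universal mapping property by building a morphism $\langle\varphi,\{f_i\}\rangle$ of semilattice direct systems, verifying it on free generators, and gluing it via the categorical equivalence; your explicit left-normal-band computation $g^{J_0}_j\cdot b=g^{J_1}_{j'}$, showing that the P\l onka transition maps send generators to the matching generators, is a useful detail that the paper only asserts.
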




\begin{proof} 
The proof runs essentially as that of \cite{Romanowskafree}. We include it here as it clarifies the structure of free algebras.  

\noindent
$(\Rightarrow)$ 
Let $\mathbf{A}$ be a free algebra in $R(\V)$ with generators $\{a_{j}\}_{j\in J}$.

\noindent
By Lemmas \ref{lemma: Lemma 1 libere} and \ref{lemma: Lemma 2 libere}, $\mathbf{A}$ is a P\l onka sum over the semilattice direct with zero $\mathbb{A} = (\{\mathbf{A}_i\}_{i\in I}, (I,\le), \{p_{ij}\}_{i\le j})$ such that:
\begin{itemize}
    \item $(I,\leq ) = (\mathcal{P}_{fin}(J),\subseteq)$;
    \item for every $ i\in I\setminus\{i_0\}, \A_i$ is a free finitely generated algebra in $\V$, while $\mathbf{A}_{i_0}=Sg^{\mathbf{A}}(\emptyset)$.
\end{itemize}
\noindent
We show that (1), (2) and (3) actually hold.

\noindent (1) $\mathcal{P}_{fin}(J)$ is the free semilattice with zero over the set of generators $X =\{\{j\} \;:\; j\in J\}$. Indeed, consider the map $f_0\colon X\to \mathbf{S}$, from the set $X$ to a semilattice with zero $\mathbf{S}$. Then the map $f\colon\mathcal{P}_{fin}(J) \rightarrow \mathbf{S}$ defined by: 

$$
f(\{j_1,..,j_n\}) = \begin{cases}
f_{0}(j_1)\vee\dots\vee f_{0}(j_n), \text{ if } \{j_1,..,j_n\}\in \mathcal{P}_{fin}(J)\setminus\emptyset, \\
   0_{\mathbf{S}}, \text{ if } \{j_1,..,j_n\} = \emptyset\\
\end{cases}$$

\noindent 
coincides with $f_0$ when restricted to a singleton $\{j\}$ . Moreover $f$ is a semilattice homomorphism. Indeed 
\begin{align*}
 f(\{j_{1},\dots, j_{k}\} \cup \{j'_{1},\dots, j'_{l}\}) = & f(\{j_1, \dots, j_k,j'_1,\dots, j'_l\})
 \\ =& f_{0}(\{j_1\})\vee\dots f_{0}(\{j_k\})\vee f_{0}(\{j'_1\})\vee\dots \vee f_{0}(\{j'_l\})
 \\ =& f(\{j_1\})\vee\dots f(\{j_k\})\vee f(\{j'_1\})\vee\dots \vee f(\{j'_l\}).   
\end{align*}
     
\noindent And this shows that $(I,\leq ) = (\mathcal{P}_{fin}(J),\subseteq)$ is a free semilattice with zero.
\\

\noindent
(2) By Lemma \ref{lemma: Lemma 2 libere}, $\A_{i_0} = \A_{\emptyset}$ and, by definition, $\A_{\emptyset} = Sg^{\A}(\emptyset)$. By Lemmas \ref{lemma: Lemma 1 libere} and \ref{lemma: Lemma 2 libere}, for every $i\in I\setminus \{i_0\}$, $\A_i$ is the free algebra in $\V$ with generators $G^{i} = \{g^{i}_{1},\dots,g^{i}_{n}\}$. Observe that, in case $i\in J$, i.e. it is a generator of the free semilattice (with zero) $(I,\leq)$, then, by construction and the fact that $I = \mathcal{P}_{fin}(J)$, $i$ corresponds to a singleton $J_0 = \{k_1\}\subseteq J$ and $G^{i} = \{g_{1}\}$ with $g_1 = a_{k_1} = a_1$: thus $\A_{i}$ is one generated by this single element. On the other hand, if $i\in I\setminus J$ then, by construction, the algebra $\A_i$ is generated by $G^{i} = \{g^{i}_1,\dots, g^{i}_{n}$\}. \\

\noindent (3) Firstly, observe that, if $i\in J$ (i.e. $i$ is chosen among the generators of the semilattice with zero $(I,\leq)$) then, by (2), $\A_i$ is $\V$-free generated by $a_i$, therefore, for every $k\in I$ such that $i\leq k$, the (injective) map $p^{0}_{ik}\colon \{a_i\}\to \A_k$ extends to an injective homomorphism $p_{ik}\colon\A_i\to \A_k$ (see e.g. \cite[Exercise 5, Ch.10]{BuSa00}). Similarly, if $i\in I\setminus J\cup\{i_0\}$ then $i = i_1\vee\dots\vee i_n$, with $i_1,\dots, i_n\in J$ ($(I,\leq)$ is free and generated by $J$) and, by (2) $\A_i$ is a free algebra in $\V$ generated by $G^{i} = \{g_1,\dots,g_n\}$. Moreover, for every $k\in I$ such that $i\leq k$, $\A_k$ is $\V$-free and generated by $G^{k} = \{g_1, \dots,g_m\}$ with $n\leq m$ (as $(I,\leq) = (\mathcal{P}_{fin}^{\ast}(J)),\subseteq$ by (1)), therefore it is naturally defined an injective map $p^{0}_{ik}\colon G^{i}\to\A_{k}$, $g^{i}_{j}\mapsto g^{k}_{j}$ (for every $j\in\{1,\dots, n\}$) and since $\A_i$ is $\V$-free, $p^{0}_{ik}$ extends to an injective homomorphism $p_{ik}\colon\A_i\to\A_k$. \\

\noindent
$(\Leftarrow)$
By hypothesis, $\A\in R(\V)$ is the P\l onka sum over a semilattice direct systems satisfying conditions (1)-(2)-(3). In particular, the partition function corresponding to the P\l onka sum decomposition of $\A$ is given by the interpretation of the term function $x\cdot y$ witnessing the strong irregularity of $\V$.
\noindent
First, let's prove that $\mathbf{A}=Sg^\mathbf{A}(\{a_j\})_{j\in J}$. By (1), $(I,\leq)$ is the free semilattice (with zero) generated by $J$, so let $i\in I$, then $i = i_1\vee\dots\vee i_{n}$, for some $i_{1},\dots, i_n\in J$. Let $a_{i_k}\in A_{i_k}$, for some $k\in \{1,\dots,n\}$. Recall from Theorem \ref{th: Plonka dec. theorem} that $p_{i_{k}i}(a_{i_k}) = a_{i_k}\cdot^{\mathbf{A}} x$, with $x$ an arbitrary element chosen in $A_i$. By (2), $\A_i$ is $\V$-free on the set of free generators $G^{i} = \{g^{i}_{1},\dots,g^{i}_{n}\}$. Assume that $a_{i_1}\in A_{i_1}$, ..., $a_{i_n}\in A_{i_n}$. Then $a_{i_1} \cdot^{\A} ... \cdot^{\A} a_{i_n} = p_{i_{1}i}(a_{i_1})\cdot^{\A_i}...\cdot^{\A_i} p_{i_{n}}i(a_{i_n}) = g^{i}_{1}\cdot^{\A_i} ... \cdot^{\A_i} g^{i}_{n}$, where 
last equality holds by (3).
For a generic generator $g^{i}_{k}$ of $\A_i$, we have that $g_j^i=p_{i_{j}i}(a_{i_k})= a_{i_k}\cdot^{\A} (a_{i_1}\cdot^{\A} ... \cdot^{\A} a_{i_n}) = a_{i_k}\cdot^{\A} a_{i_1}\cdot^{\A} ... \cdot^{\A} a_{i_n}$, where the last equality follows using the fact that $\cdot^{\A}$ is a partition function (i.e. in particular $(A,\cdot)$ is a left normal band).
We have hence shown that any generator of $\A_i$ can be written as a product of elements in $\{a_j\}_{j\in J}$, therefore $\A$ is generated by $\{a_j\}_{j\in J}$.

\noindent
We now prove that $\A$ has the universal mapping property for $R(\V)$ over $\{a_j\}_{j\in J}$. Let $\B\in R(\V)$ and $h_0\colon \{a_j\}_{j\in J} \rightarrow \mathbf{B}$ be a map. Since $\B \in R(\V)$, by Theorem \ref{th: regularization of st.irregular varieties}, $\B$ is P\l onka sum over a semilattice direct system $\mathbb{B}=(\{B_l\}_{l\in L},(L,\le^L),\{q_{uv}\}_{u\le v})$. The map $h_0$ induces a map $f_0\colon J \rightarrow L$ defined as follows: for $j\in J$, $f_0(j) = l$, where $l$ is the index of the fiber $\B_l$ such that $h_0(a_j)\in B_l$ (the fibers of a P\l onka sum are disjoint, so $f_0$ is well-defined). 

\noindent
Let $f\colon I\rightarrow L$ be the semilattice homomorphism extending $f_0$ (by (1) $(I,\le)$ is free on the set of generators $J$). 
Let $i\in I$, hence $i = i_1\vee\dots\vee i_{n}$, for some $i_1,\dots,i_n\in J$ (as $J$ generates $i$ by (1)). Moreover, by (2), the algebras $\A_{i_1},\dots,\A_{i_n}$ are one generated by the elements $a_{i_1},\dots, a_{i_n}$, respectively. Therefore we can consider the elements $h_0(a_{i_1})\in B_{f(i_1)},\dots, h_0(a_{i_n})\in B_{f(i_n)}$. Observe that since $f$ is a semilattice homomorphism we have $f(i_1)\vee\dots\vee f(i_n) = f(i_1\vee\dots \vee i_n) = f(i)$. Therefore we can define a map $h_0^{i}\colon G^i\to \B_{f(i)}$ as $g^{i}_{k}\mapsto h_{0}^{i}(g_{k}^{i})=q_{f(i_k)f(i)}(h_0(a_{i_k}))$. Let $h^{i}\colon \A_i\to \B_{f(i)}$ the homomorphism extending $h_{0}^{i}$, and for $i=i_0$ let $h^{i_0}$ be the unique homomorphism $\A_{i_0} \rightarrow \B_{l_0}$.
Let us define a map $h\colon \A\to\B$ as $a\mapsto h(a):= h^{i}(a)$, for $a\in A_i$.
In order to verify that $h$ is a homomorphism we apply the  categorical equivalence established in \cite[Th. 3.3.8]{Bonziobook} (see also \cite{SBLogicaUniv}, \cite{BonzioLoiDuality}), so we show that $(f,(h^i)_{i\in I})$ is a morphism of semilattice direct systems with zero.

\noindent
As for every $i\in I$ we have that $h^i \in Hom(\A_i, \B_{f(i)})$, the following diagram is commutative for every $i\in I$, since for every $\tau$-algebra $\mathbf{C}$ there exists one and only one homomorphism $\A_{i_0} \rightarrow \mathbf{C}$.

\begin{center}

\begin{tikzpicture}
\draw (-4,0) node {$ \B_{l_0} $};
	\draw (4,0) node {$\B_{f(i)}$};
	
	\draw [line width=0.8pt, ->] (-3.6,0) -- (3.6,0);
	\draw (0,-0.4) node {\begin{footnotesize}$q_{l_0f(i)}$\end{footnotesize}};
	
	\draw [line width=0.8pt, <-] (3.3,3) -- (-3.3,3);
	\draw (0, 3.3) node {\begin{footnotesize}$p_{i_0i}$\end{footnotesize}};

	\draw (-4,3) node {$\A_{i_0}$}; 
	
	\draw (4,3) node {$\A_{i}$};
		
	\draw [line width=0.8pt, ->] (-4,2.6) -- (-4,0.4);
	\draw (-4.6,1.7) node {\begin{footnotesize}$h^{i_0}$\end{footnotesize}};
	\draw (4.6,1.7) node {\begin{footnotesize}$h^{i}$\end{footnotesize}};
	\draw [line width=0.8pt, <-] (3.9,0.4) -- (3.9,2.6);
	
\end{tikzpicture}
\end{center}

\noindent
So to conclude the proof is sufficient to show that for every $i_0\neq i\le i'$ the following diagram is commutative 

\begin{center}

\begin{tikzpicture}
\draw (-4,0) node {$ \B_{f(i)} $};
	\draw (4,0) node {$ \B_{f(i')} $};
	
	\draw [line width=0.8pt, ->] (-3.6,0) -- (3.6,0);
	\draw (0,-0.4) node {\begin{footnotesize}$q_{f(i)f(i')}$\end{footnotesize}};
	
	\draw [line width=0.8pt, <-] (3.3,3) -- (-3.3,3);
	\draw (0, 3.3) node {\begin{footnotesize}$p_{ii'}$\end{footnotesize}};

	\draw (-4,3) node {$\A_i$}; 
	
	\draw (4,3) node {$\A_{i'}$};
		
	\draw [line width=0.8pt, ->] (-4,2.6) -- (-4,0.4);
	\draw (-4.6,1.7) node {\begin{footnotesize}$h^i$\end{footnotesize}};
	\draw (4.6,1.7) node {\begin{footnotesize}$h^{i'}$\end{footnotesize}};
	\draw [line width=0.8pt, <-] (3.9,0.4) -- (3.9,2.6);
	
\end{tikzpicture}
\end{center}

\noindent Since every algebra $\A_i$ is a $\V$-free algebra, it is enough to check the commutativity of the above diagram on the generators.

\begin{center}

\begin{tikzpicture}
\draw (-4,0) node {$ \B_{f(i)} $};
	\draw (4,0) node {$ \B_{f(i')} $};
	
	\draw [line width=0.8pt, ->] (-3.6,0) -- (3.6,0);
	\draw (0,-0.4) node {\begin{footnotesize}$q_{f(i)f(i')}$\end{footnotesize}};
	
	\draw [line width=0.8pt, <-] (3.3,3) -- (-3.3,3);
	\draw (0, 3.3) node {\begin{footnotesize}$p_{ii'}$\end{footnotesize}};

	\draw (-4,3) node {$G_i$}; 
	
	\draw (4,3) node {$G_{i'}$};
		
	\draw [line width=0.8pt, ->] (-4,2.6) -- (-4,0.4);
	\draw (-4.6,1.7) node {\begin{footnotesize}$h^i_0$\end{footnotesize}};
	\draw (4.6,1.7) node {\begin{footnotesize}$h^{i'}_0$\end{footnotesize}};
	\draw [line width=0.8pt, <-] (3.9,0.4) -- (3.9,2.6);
	
\end{tikzpicture}
\end{center}

\noindent 
Assume that $i\neq i_0$ and let $g^{i}_{k}\in G_i$. Then  $q_{f(i)f(i')}(h^{i}_{0}(g^{i}we_{k})) =vq_{f(i)f(i')}(q_{f(i_{k})f(i)}(h_{0}(a_{i_k})))= q_{f(i_{k})f(i')}(h_{0}(a_{i_k}))$,

\noindent 
where the first equality holds since by (3) the generator $a_{i_k}$ is sent into the generator $g^{i}_{k}$. \\
\noindent
On the other hand, $h^{i'}_{0}(p_{ii'}(g_{k}^{i})) = h^{i'}_{0}(g_{k}^{i'}) = q_{f(i_{k})f(i')}(h_{0}(a_{i_k}))$, where the first equality holds once again by hypothesis (3). The last equality holds in virtue of the injectivity of the map $p_{ii'}$ (again by (3)), from which it follows that the element $a_{i_k}$ is the unique element sent into $g^{i}_{k}$ and $g^{i'}_{k}$ by $p{i_{k}i}$ and $p_{i_{k}i'}$, respectively.
\end{proof} 

\begin{remark}
 Observe that the Clifford semigroup $\mathbf{C}$ introduced in Example \ref{ex: countcliff} is \emph{not free} as the two generators $x_{1}$ and $x_{2}$ are mapped (by transition homomorphisms) into the same generator $x_{3}$, violating condition $(3)$ in Theorem \ref{th: caratterizzazione libere}. Moreover,
 also condition (2) in Theorem \ref{th: caratterizzazione libere} is violated as would impose the free group $\G_{3}$ to be two-generated.
\end{remark}


\section{Congruences}



Throughout the whole section we will work with a semilattice direct system $\mathbb{A}=(\{\A_i\}_{i\in I}, (I, \le), \{p_{ij}\}_{i\le j})$ of algebras belonging to a strongly irregular variety $\V$ ($\A_{i}\in\V$ for every $i\in I$), whose strong irregularity is witnessed by a binary term $x\cdot y$. 

We iniziate our investigation by understanding the essential structural features of any congruence on $\PLA$. 

\begin{definition}\label{def: cong1}
    Let $\theta \in Con(\PLA)$. Define:

\begin{enumerate}
    \item For every $ (i,j) \in I $, let $ \theta_{ij}:= \theta \cap (A_i \times A_j)$;
    \item $S_{\theta}:=\{(i,j) \in I \times I \mid \theta_{ij} \neq \emptyset\}$, i.e. $(i,j) \in I$ iff there exist elements $a\in A_{i}$, $b\in A_{j}$ such that $(a,b)\in\theta$;
    \item $C_\theta=\{(i,j) \in I \times I \mid \{(i, i \vee j), (j, i \vee j)\} \subseteq S_{\theta}\}$.
\end{enumerate}
\end{definition}

We will refer to $\theta_{ij}$ as the \emph{fibers} of the congruence $\theta$ (the use of the term will become clear later). The strong irregularity of $\mathcal{V}$ provides us with the following fundamental lemma, which sheds light on the algebraic structure of $S_\theta$.

\begin{lemma}\label{lemma: congruence1}
    Let $\theta \in Con(\PLA)$, then for every $(i,j) \in S_\theta $ and  for any $ a \in A_i $, $ (a,p_{ii\vee j}(a)) \in \theta$. Moreover, $S_{\theta}$ is a reflexive and symmetric subsemilattice of $\mathbf{I \times I}$ satisfying the following condition: 
$$
 \text{for every } i,j,k\in I \text{ such that } (i,j),(j,k) \in S_{\theta},  (i, i \vee k) \in S_{\theta}. \hspace{2cm} \text{\emph{(Upper Transitivity)}}
 $$
\end{lemma}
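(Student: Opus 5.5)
The key tool is the binary term $x\cdot y$ witnessing strong irregularity. In $\PLA$ it acts as the partition function, so for $a\in A_i$ and $b\in A_j$ we have $a\cdot b=p_{i,i\vee j}(a)\cdot^{\A_{i\vee j}}p_{j,i\vee j}(b)=p_{i,i\vee j}(a)$, since $\A_{i\vee j}\in\V$ satisfies $x\cdot y\approx x$. Hence $a\cdot b=p_{i,i\vee j}(a)$, and this single identity should drive every part of the statement.

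\emph{First claim.} Let $(i,j)\in S_\theta$ and pick a witness $(c,d)\in\theta$ with $c\in A_i$, $d\in A_j$. For arbitrary $a\in A_i$ we have $(a,a)\in\theta$. Multiplying gives $(a\cdot c,\,a\cdot d)\in\theta$. Now $a\cdot c=p_{ii}(a)=a$ and $a\cdot d=p_{i,i\vee j}(a)$, so $(a,p_{i,i\vee j}(a))\in\theta$.

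\emph{Structure of $S_\theta$.}
\begin{itemize}
\item Reflexivity: $\theta\supseteq\Delta$ and every fiber is nonempty.
\item Symmetry: $\theta$ is symmetric.
\item Subsemilattice: take $(i,j),(k,l)\in S_\theta$ with witnesses $(a,b)$ and $(c,d)$. Then $(a\cdot c,\,b\cdot d)\in\theta$ by compatibility. Here $a\cdot c\in A_{i\vee k}$ and $b\cdot d\in A_{j\vee l}$, because the product lies in the fiber indexed by the join. So $(i\vee k,\,j\vee l)\in S_\theta$.
\item Upper transitivity: let $(i,j),(j,k)\in S_\theta$ with witnesses $(a,b)$ and $(b',c)$, where $a\in A_i$, $b,b'\in A_j$, $c\in A_k$. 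By the first claim applied to $(j,k)$, $(b,p_{j,j\vee k}(b))\in\theta$, so $(a,p_{j,j\vee k}(b))\in\theta$ and $(i,j\vee k)\in S_\theta$. Applying the first claim to $(i,j\vee k)$ gives $(a,p_{i,i\vee j\vee k}(a))\in\theta$. That pair lands in $A_i\times A_{i\vee j\vee k}$, which is not what we want. A cleaner route is to use the witness $(b',c)$ directly. Multiply it on the left by $a$ to get $(a\cdot b',\,a\cdot c)\in\theta$, that is, $(p_{i,i\vee j}(a),\,p_{i,i\vee k}(a))\in\theta$. Also $(a,p_{i,i\vee j}(a))\in\theta$ by the first claim. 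Transitivity then gives $(a,p_{i,i\vee k}(a))\in\theta$, so $(i,i\vee k)\in S_\theta$.
\end{itemize}

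The only point needing care is upper transitivity, where one must choose the right left-multiplication so that the index collapses to $i\vee k$ rather than $i\vee j\vee k$. Everything else reduces to the computation $a\cdot b=p_{i,i\vee j}(a)$, together with the fact that operations of the Płonka sum land in the fiber indexed by the join (Lemma \ref{lemma: int}).
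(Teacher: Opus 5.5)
Your proof is correct, and for the first claim, reflexivity, symmetry and join-closure it coincides with the paper's. The one genuine difference is in upper transitivity.

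Both arguments close the chain $a \mathrel{\theta} p_{i,i\vee j}(a) \mathrel{\theta} p_{i,i\vee k}(a)$, but they obtain the second link differently:
\begin{itemize}
\item You get it in a single step, by left-multiplying a witness $(b',c)\in\theta_{jk}$ by $a$.
\item The paper never returns to witness pairs. It uses symmetry and join-closure of $S_\theta$ to obtain $(i\vee j, j\vee k)\in S_\theta$ and $(i\vee k, j)\in S_\theta$. It then applies the first claim to these pairs at the elements $p_{i,i\vee j}(a)$ and $p_{i,i\vee k}(a)$, which relates both to $p_{i,i\vee j\vee k}(a)$, and comes back down by transitivity.
\end{itemize}

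Your route is shorter and needs neither symmetry nor the subsemilattice property. It also gives something stronger: $(j,k)\in S_\theta$ forces $(p_{i,i\vee j}(a),p_{i,i\vee k}(a))\in\theta$ for every $a$ in every fiber $A_i$, and the first claim is the case $i=j$. The paper's route has the merit of deriving upper transitivity purely from properties of $S_\theta$ already established.

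The abandoned first attempt inside your upper-transitivity item can simply be deleted.
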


\begin{proof}
    Let $(i,j)\in S_{\theta}$, then there exist $(c,d)\in \theta_{ij}$, i.e. $(c,d)\in\theta$ with $c\in A_{i}$ and $d\in A_{j}$. Since $\theta$ is a congruence on $\PLA$ we have $(a, p_{ii\vee j}(a))=(a\cdot^{\PLA} c, a \cdot^{\PLA} d) \in \theta$, for any $a\in A_{i}$, where the equality follows from $a\cdot^{\PLA} c = a$ (as $a,c\in A_{i}$) and $p_{i,i\vee j}(a) = a\cdot^{\PLA} x$, for any $x\in A_j$. Reflexivity and symmetry of $S_{\theta}$ follow immediately from the definition of $S_{\theta}$. To see that $S_{\theta}$ is a subsemilattice of $\mathbf{I \times I}$, let $(i,j),(u,v) \in S_{\theta}$, therefore there exist $(a,b)\in \theta_{ij}, (c,d)\in \theta_{uv}$, so $(a\cdot^{\PLA} c, b\cdot^{\PLA}d)\in \theta_{i \vee u, j \vee v}$, consequently $(i\vee u, j \vee v) \in S_{\theta}$. \\
\noindent
To see that $S_{\theta}$ is also upper transitive, consider $(i,j),(j,k) \in S_{\theta}$ and $a\in A_i$. Since $(i,j) \in S_{\theta}$, we have $(a,p_{ii\vee j}(a))\in \theta$. Furthermore, $(i,j),(j,k)\in S_{\theta} $ implies $ (i \vee j, j \vee k) \in S_{\theta}$ (by preservation of join), consequently (thanks to what proved above) $(p_{ii\vee j}(a), p_{ii\vee j \vee k}(a))=(p_{ii\vee j}(a),p_{i\vee j, (i\vee j) \vee (j \vee k)}(p_{ii\vee j}(a))) \in \theta$. So by transitivity of $\theta$ we have $(a,p_{ii\vee j \vee k}(a)) \in \theta$. Moreover, $(i,j),(j,k) \in S_{\theta}$, then $(i\vee k, j)=(i\vee k, j \vee j) \in S_{\theta}$, so we have $(p_{ii\vee k}(a), p_{ii\vee j \vee k}(a))=(p_{ii\vee k}(a), p_{i\vee k,i\vee j \vee k}((p_{ii\vee k}(a)))) \in \theta$. Consequently, by transitivity of $\theta$, $(p_{ii\vee j}(a),p_{ii\vee k}(a)) $, whence $(a,p_{ii\vee k}(a)) \in \theta$, that is $(i,i \vee k) \in S_{\theta}$.
\end{proof}

 The weaker form of transitivity, dubbed \emph{upper transitivity}, introduced in the previous lemma unfortunately cannot be replaced in general with transitivity, as illustrated by the following example. 

\begin{example}
    Consider the P\l onka sum (defined over a semilattice direct system) of (three) lattices given in the following picture, where the two bottom fibers are trivial lattices and the top fiber is a two-element lattice.

\begin{center}
\begin{tikzpicture}[node distance=2cm, auto, >=Stealth]
  \node (circ23) [circle, draw, minimum size=1.5cm] at (0,0) {};
  \node (2) at (0, -0.4) {$2$};
  \node (3) at (0, 0.4) {$3$};

  \node (0_inf) [circle, draw, below left=of circ23] {$0$};
  \node (1_inf) [circle, draw, below right=of circ23] {$1$};

  \node [left=0.1cm of 0_inf] {$i$};
  \node [right=0.1cm of 1_inf] {$j$};
  \node [right=0.5cm of 3] {$k=i \vee j$};

  \draw[-] (3) -- (2);

  \draw[->, bend left=60, color=blue] (0_inf) to node {$p_{ik}$} (3);
  \draw[->, bend right=60, color=blue] (1_inf) to node [swap] {$p_{jk}$} (2);

\end{tikzpicture}

\end{center}

\noindent 
It is immediate to check that the congruence $\theta$ generated by $\{(0,3), (1,2)\}$ results into $S_\theta$ not being transitive (since $(i,k), (k,j) \in S_\theta$, but $(i,j) \notin S_\theta$). 
\end{example}

 The following lemma provides a characterization for the transitivity of $S_{\theta}$.

\begin{lemma}\label{lemma: congruence3}
    Let $\theta \in Con(\PLA)$ and $i,j,k \in I$ such that $(i,j),(j,k) \in S_{\theta}$.
    Then the following are equivalent: 
    \begin{enumerate}
        \item $(i,k) \in S_{\theta} $; 
        \item $(p_{ii\vee k} \times p_{ki\vee k})^{-1}(\theta_{i\vee k, i \vee k}) \neq \emptyset$.
    \end{enumerate}  
\end{lemma}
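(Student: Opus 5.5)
Both implications reduce to the two facts about a congruence $\theta$ on $\PLA$ that are already available. First, by Lemma \ref{lemma: congruence1}, whenever $(u,v)\in S_\theta$ we have $(a,p_{u,u\vee v}(a))\in\theta$ for every $a\in A_u$. Second, multiplying on the right by a fixed element lying in a higher fiber implements the transition maps: $x\cdot^{\PLA} e = p_{u,m}(x)$ for $x\in A_u$, $e\in A_m$, $u\le m$. Condition (2) says exactly that there are $a\in A_i$ and $b\in A_k$ with $(p_{i,i\vee k}(a),p_{k,i\vee k}(b))\in\theta$, i.e. these images lie in $\theta_{i\vee k,i\vee k}$.

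\textbf{(1)$\Rightarrow$(2).} Suppose $(i,k)\in S_\theta$, witnessed by $(a,b)\in\theta$ with $a\in A_i$, $b\in A_k$. Fix any $e\in A_{i\vee k}$. Compatibility of $\theta$ with the term operation $\cdot$ gives $(a\cdot^{\PLA} e,\; b\cdot^{\PLA} e)\in\theta$. By the second fact this pair is $(p_{i,i\vee k}(a),p_{k,i\vee k}(b))$, which lies in $\theta_{i\vee k,i\vee k}$. Hence the preimage in (2) contains $(a,b)$ and is nonempty. This direction does not use the hypothesis $(i,j),(j,k)\in S_\theta$.

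\textbf{(2)$\Rightarrow$(1).} Take $a\in A_i$, $b\in A_k$ with $(p_{i,i\vee k}(a),p_{k,i\vee k}(b))\in\theta$. The plan is to connect $a$ to $p_{i,i\vee k}(a)$, and $b$ to $p_{k,i\vee k}(b)$, inside $\theta$ and then use transitivity.
\begin{itemize}
\item Since $(i,j),(j,k)\in S_\theta$, upper transitivity in Lemma \ref{lemma: congruence1} gives $(i,i\vee k)\in S_\theta$.
\item Symmetry of $S_\theta$ gives $(k,j),(j,i)\in S_\theta$, so upper transitivity again gives $(k,k\vee i)\in S_\theta$.
\item By the first fact, $(a,p_{i,i\vee k}(a))\in\theta$ and $(b,p_{k,i\vee k}(b))\in\theta$.
\end{itemize}
Chaining $a\,\theta\,p_{i,i\vee k}(a)\,\theta\,p_{k,i\vee k}(b)\,\theta\,b$ yields $(a,b)\in\theta_{ik}$, so $(i,k)\in S_\theta$.

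The only delicate point is the second step: obtaining $(k,i\vee k)\in S_\theta$ requires applying upper transitivity in the reversed orientation, which is exactly where the hypothesis $(i,j),(j,k)\in S_\theta$ and the symmetry of $S_\theta$ enter. Everything else is routine manipulation with the partition function.
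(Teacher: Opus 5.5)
Your proof is correct and essentially matches the paper's: (2)$\Rightarrow$(1) is the same argument, and you spell out the step the paper leaves implicit, namely using upper transitivity together with symmetry of $S_\theta$ to get $(i,i\vee k),(k,i\vee k)\in S_\theta$. For (1)$\Rightarrow$(2), you multiply the witnessing pair on the right by an element of $A_{i\vee k}$ instead of applying Lemma \ref{lemma: congruence1} twice and then transitivity. This is the same mechanism, since that lemma is itself proved by this multiplication, and it correctly shows that this direction needs no hypothesis on $j$.
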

\begin{proof}
 (1) $\Rightarrow$ (2). By hypothesis we have $(i,k) \in S_\theta$, so there exists $(a,c) \in \theta_{ik}$. By Lemma \ref{lemma: congruence1} (in particular, upper transitivity) we have $(i,i \vee k)$ and $(k, i \vee k) \in S_{\theta}$ (observe that $S_\theta$ is symmetric), therefore by Lemma \ref{lemma: congruence1} we also have $(a,p_{ii\vee k}(a)), (c, p_{ki\vee k}(c)) \in \theta$ and, using the transitivity of $\theta $, $(p_{ii\vee k}(a), p_{ki\vee k}(c)) \in \theta$, that is $(p_{ii\vee k} \times p_{ki\vee k})^{-1}(\theta_{i\vee k, i \vee k}) \neq \emptyset$.\\
\noindent        
(2) $\Rightarrow$ (1). Let $(a,c) \in A_{i} \times A_k$ such that $(p_{ii\vee k}(a), p_{ki\vee k}(c)) \in \theta$. By Lemma \ref{lemma: congruence1} we have $(a,p_{ii\vee k}(a)), (c, p_{ki\vee k}(c)) \in \theta $. So $(a,c) \in \theta$ by symmetry and transitivity of $\theta$, therefore $(i,k) \in S_{\theta}$.
   \end{proof}

 In the subsequent lemma, we study the algebraic structure of $C_\theta$ and its close relationship with $S_\theta$.

\begin{lemma}\label{lemma: congruence6}
    Let $\theta \in Con(\PLA)$, then the following hold: 
    \begin{itemize}
        \item[(i)] $C_\theta \in Con(\mathbf{I})$;
        \item[(ii)] for every $(i,j) \in I \times I$, $ (i,j) \in S_{\theta} $ if and only if $ (i,j) \in C_{\theta} $ and $
    (p_{ii\vee j} \times p_{ji\vee j})^{-1}(\theta_{i\vee j, i \vee j}) \neq \emptyset. $    
\end{itemize}

\noindent In particular, $C_\theta=Cg^{\mathbf{I}}(S_\theta)$.
\end{lemma}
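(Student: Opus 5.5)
The plan is to prove (i) first by checking the congruence axioms on $C_\theta$ directly, using the properties of $S_\theta$ collected in Lemma \ref{lemma: congruence1}. Then (ii) should follow from Lemma \ref{lemma: congruence3}, and the final claim from (i) and (ii) together.

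\emph{Reflexivity and symmetry.} Since $S_\theta$ is reflexive, $(i,i)\in S_\theta$ for every $i$, so $(i,i)\in C_\theta$. Symmetry of $C_\theta$ is built into its definition, because the condition $\{(i,i\vee j),(j,i\vee j)\}\subseteq S_\theta$ is symmetric in $i$ and $j$.

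\emph{Compatibility with $\vee$.} Let $(i,j),(u,v)\in C_\theta$. Since $S_\theta$ is a subsemilattice of $\mathbf{I}\times\mathbf{I}$, we can join $(i,i\vee j)$ with $(u,u\vee v)$ to get $(i\vee u,\, i\vee j\vee u\vee v)\in S_\theta$. Symmetrically we get $(j\vee v,\, i\vee j\vee u\vee v)\in S_\theta$. These two pairs say exactly that $(i\vee u, j\vee v)\in C_\theta$.

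\emph{Transitivity.} This is the step I expect to be the main obstacle. Let $(i,j),(j,k)\in C_\theta$; the goal is $(i,i\vee k),(k,i\vee k)\in S_\theta$. I would first derive $(i\vee j, j)\in S_\theta$ and $(j, j\vee k)\in S_\theta$ from the hypotheses, using symmetry of $S_\theta$. Upper transitivity applied to these two pairs gives $(i\vee j,\, i\vee j\vee k)\in S_\theta$. Combining this with $(i,i\vee j)\in S_\theta$ (again via symmetry), upper transitivity once more yields $(i,\, i\vee j\vee k)\in S_\theta$. By the same argument, $(k,\, i\vee j\vee k)\in S_\theta$.

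The difficulty is that we need $i\vee k$, not $i\vee j\vee k$. To close this gap I would argue at the level of elements, using Lemma \ref{lemma: congruence1}. For $a\in A_i$ we have $(a,p_{i,i\vee j\vee k}(a))\in\theta$. Multiplying on the left by an element $b\in A_{i\vee k}$ gives a pair of the form $(b,\ b\cdot p_{i,i\vee j\vee k}(a))$, whose second entry lies in $A_{i\vee j\vee k}$. Combined with the symmetric statement for $k$, this should produce a pair in $\theta$ between $A_{i\vee k}$ and elements that are related back to $A_i$. If that element argument gets stuck, I would fall back on upper transitivity applied to $(i,j)$ and $(j,k)$ directly: it gives $(i,i\vee k)\in S_\theta$ outright, provided $(i,j)$ and $(j,k)$ themselves lie in $S_\theta$ and not merely in $C_\theta$. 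So the hardest case is reducing from $C_\theta$ to $S_\theta$ here, and I would try the element-level left-multiplication trick first.

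\emph{Part (ii).} For ($\Rightarrow$), if $(i,j)\in S_\theta$ then Lemma \ref{lemma: congruence1} (upper transitivity with $j=j$, together with symmetry) gives $(i,i\vee j),(j,i\vee j)\in S_\theta$, so $(i,j)\in C_\theta$. The preimage condition then follows from Lemma \ref{lemma: congruence3}, with $(i,j),(j,j)$ as input. For ($\Leftarrow$), apply Lemma \ref{lemma: congruence3} with the middle index $i\vee j$: the pairs $(i,i\vee j)$ and $(i\vee j, j)$ both lie in $S_\theta$ because $(i,j)\in C_\theta$ and $S_\theta$ is symmetric. The nonempty preimage condition is exactly (2) of that lemma, hence $(i,j)\in S_\theta$.

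\emph{Final claim.} By (ii), $S_\theta\subseteq C_\theta$, and by (i), $C_\theta$ is a congruence, so $Cg^{\mathbf I}(S_\theta)\subseteq C_\theta$. Conversely, if $(i,j)\in C_\theta$ then $(i,i\vee j)$ and $(j,i\vee j)$ lie in $S_\theta$. Symmetry of the generated congruence gives $(i\vee j,j)$ in it as well, and transitivity through $i\vee j$ gives $(i,j)\in Cg^{\mathbf I}(S_\theta)$.
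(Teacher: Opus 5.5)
Apart from one step, your plan follows the paper's proof. Reflexivity, symmetry and $\vee$-compatibility of $C_\theta$ are argued the same way. Part (ii) comes from Lemma~\ref{lemma: congruence3} with middle index $j$ for ($\Rightarrow$) and $i\vee j$ for ($\Leftarrow$), and the final claim is derived as in the paper. The gap is at the end of transitivity. You correctly reach $(i,i\vee j\vee k)\in S_\theta$ and $(k,i\vee j\vee k)\in S_\theta$, but you never reach $(i,i\vee k),(k,i\vee k)\in S_\theta$. The left-multiplication argument stops at ``this should produce''. The fallback fails because $(i,j)\in C_\theta$ does not imply $(i,j)\in S_\theta$: in the example of three lattices preceding Lemma~\ref{lemma: congruence3}, $(i,j)\in C_\theta\setminus S_\theta$. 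So upper transitivity cannot be applied to $(i,j),(j,k)$. As written, part (i), and with it the final claim $C_\theta=Cg^{\mathbf I}(S_\theta)$, is not established.

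The missing observation is that upper transitivity yields $(a,a\vee c)\in S_\theta$ from $(a,b),(b,c)\in S_\theta$ \emph{whatever the middle index $b$ is}. So the extra $j$ disappears once you route through $b=i\vee j\vee k$. By symmetry $(i\vee j\vee k,k)\in S_\theta$, and upper transitivity applied to $(i,i\vee j\vee k)$ and $(i\vee j\vee k,k)$ gives $(i,i\vee k)\in S_\theta$. Exchanging $i$ and $k$ gives $(k,i\vee k)\in S_\theta$, hence $(i,k)\in C_\theta$; this is how the paper concludes. Your element-level idea can also be finished, though it only re-proves upper transitivity. Multiply $(a,p_{i,i\vee j\vee k}(a))\in\theta$ on the left by $b:=p_{i,i\vee k}(a)$ to get $(p_{i,i\vee k}(a),p_{i,i\vee j\vee k}(a))\in\theta$. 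Transitivity of $\theta$ then gives $(a,p_{i,i\vee k}(a))\in\theta$.
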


\begin{proof}
(i). Reflexivity and symmetry are immediate from the definition of $C_\theta$ and Lemma \ref{lemma: congruence1}. It remains to prove transitivity and join preservation.\\ 
\noindent Let $(i,j)$, $(j,k) \in I \times I$ be such that $(i,j),(j,k) \in C_{\theta}$, which means that $\{(i, i \vee j), (j, i \vee j)\}, \{(j, j \vee k),(k, j \vee k)\}\subseteq S_\theta$. Then, since (by Lemma \ref{lemma: congruence1}) $S_{\theta} \le \mathbf{I \times I}$, $(i \vee j,\ i \vee j \vee k) \in S_\theta$, so by Lemma \ref{lemma: congruence1} (upper transitivity of $S_\theta$) we deduce $(i, i \vee j \vee k) \in S_\theta$. Analogously, we also get $(k, i \vee j \vee k) \in S_\theta$, and so $\{(i, i \vee k), (k, i \vee k)\} \subseteq S_\theta$ (by upper transitivity) that is $(i,k) \in C_{\theta}$, showing transitivity.\\
\noindent Let $(i_1, j_1), (i_2, j_2) \in C_{\theta}$, that is $\{(i_1, i_1 \vee j_1), (j_1, i_1 \vee j_1), (i_2, i_2 \vee j_2), (j_2, i_2 \vee j_2)\} \subseteq S_{\theta}$, so, since $S_\theta \le \mathbf{I \times I}$, $\{(i_1 \vee i_2, (i_1 \vee i_2)\vee(j_1 \vee j_2)), (j_1 \vee j_2, (i_1 \vee i_2) \vee (j_1 \vee j_2))\} \subseteq S_\theta$, i.e. $(i_1 \vee i_2, j_1 \vee j_2) \in C_{\theta}$.\\
\noindent        
(ii). $(\Rightarrow)$ Clearly $S_\theta \subseteq C_\theta$ (since $S_\theta \le \mathbf{I \times I}$). Moreover, suppose $(i,j) \in S_\theta$, then we also have $(i,j),(j,j) \in S_\theta$ (by reflexivity) and applying Lemma \ref{lemma: congruence3} we deduce $(p_{ii\vee j} \times p_{ji \vee j})^{-1}(\theta_{i\vee j, i \vee j}) \neq \emptyset$. \\
\noindent
$(\Leftarrow)$ suppose $(i,j) \in C_{\theta}$ and $(p_{ii\vee j} \times p_{ji\vee j})^{-1}(\theta_{i\vee j,i \vee j}) \neq \emptyset$, then $\{(i, i \vee j),(j, i \vee j)\} \subseteq S_{\theta}$ and $(p_{ii\vee j} \times p_{ji\vee j})^{-1}(\theta_{i\vee j,i \vee j}) \neq \emptyset$, so by Lemma \ref{lemma: congruence3} we get $(i,j) \in S_{\theta}$.\\
    \noindent
 Finally, let $C$ be any congruence of $\I$ extending $S_{\theta}$ and $(i,j) \in C_{\theta}$, then by definition we have $\{(i,i \vee j), (j, i \vee j)\} \subseteq S_\theta \subseteq C \in Con(\mathbf{I})$, so by transitivity of $C$ we get $(i,j) \in C$. This shows that $C_\theta=Cg^{\mathbf{I}}(S_\theta)$.
\end{proof}

 It turns out that, in some particular, yet relevant, cases, $S_\theta=C_{\theta}$, that is $S_{\theta} \in Con(\mathbf{I})$.

\begin{lemma} \label{corollary: lemma5}
Let $\theta \in Con(\PLA)$ with the algebraic language of $\PLA$ containing constants (or $\V$ containing an algebraic constant) or $\I$ be a chain. Then $S_\theta \in Con(\mathbf{I})$.

    

    
\end{lemma}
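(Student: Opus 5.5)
By Lemma \ref{lemma: congruence6}, $S_\theta\subseteq C_\theta\in Con(\mathbf{I})$. Moreover, $(i,j)\in S_\theta$ if and only if $(i,j)\in C_\theta$ and there are $a\in A_i$, $b\in A_j$ with $(p_{i,i\vee j}(a),p_{j,i\vee j}(b))\in\theta$. So it suffices to prove $C_\theta\subseteq S_\theta$. Fix $(i,j)\in C_\theta$ and set $k=i\vee j$. By definition of $C_\theta$ we have $(i,k),(j,k)\in S_\theta$. The first part of Lemma \ref{lemma: congruence1} then gives $(a,p_{ik}(a))\in\theta$ for every $a\in A_i$ and $(b,p_{jk}(b))\in\theta$ for every $b\in A_j$. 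Hence it is enough to exhibit a single pair $a\in A_i$, $b\in A_j$ with $p_{ik}(a)=p_{jk}(b)$, or merely with $(p_{ik}(a),p_{jk}(b))\in\theta$. Transitivity of $\theta$ then yields $(a,b)\in\theta$, i.e.\ $(i,j)\in S_\theta$ (equivalently, condition (2) of Lemma \ref{lemma: congruence3}).

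\emph{Chain case.} If $\mathbf{I}$ is a chain, then $k=i\vee j\in\{i,j\}$, say $k=j$. Then $(i,j)=(i,k)\in S_\theta$ directly, and no further argument is needed.

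\emph{Constants case.} Let $c$ be a constant symbol, so $c^{\A}\in A_{i_0}$. Put $a:=p_{i_0 i}(c^{\A})$ and $b:=p_{i_0 j}(c^{\A})$. Since the $p$'s are homomorphisms, $a=c^{\A_i}$ and $b=c^{\A_j}$. The composition law $p_{ik}\circ p_{i_0i}=p_{i_0k}=p_{jk}\circ p_{i_0j}$ gives $p_{ik}(a)=p_{jk}(b)=c^{\A_k}$, which is the required pair. If instead $\V$ only has an \emph{algebraic constant}, i.e.\ a unary term $t(x)$ with $\V\models t(x)\approx t(y)$, apply the same idea to $t$. Take $a:=t^{\A}(x)$ for any $x\in A_i$ and $b:=t^{\A}(y)$ for any $y\in A_j$. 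Each fiber lies in $\V$ and the transition maps are homomorphisms, so $p_{ik}(a)=t^{\A_k}(p_{ik}(x))=t^{\A_k}(p_{jk}(y))=p_{jk}(b)$. The middle equality holds because $t$ is constant on $\A_k\in\V$.

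The only delicate point is making sure the argument does not rely on fibers being subalgebras, since in the presence of constants only $\widetilde{\A}$-subalgebras are available. I will therefore phrase everything through the fiber operations and the transition homomorphisms, as above, rather than through $\PLA$ directly. With that, the lemma follows: $S_\theta=C_\theta=Cg^{\mathbf{I}}(S_\theta)\in Con(\mathbf{I})$.
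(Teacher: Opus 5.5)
Your proof is correct and is essentially the paper's argument: Lemma~\ref{lemma: congruence1} ties each element to its image under a transition map, the constants (or algebraic constants) provide a common element in the images of two transition maps into a common upper bound, and the chain case reduces to upper transitivity. The only difference is packaging: you prove $C_\theta\subseteq S_\theta$ and take $C_\theta\in Con(\mathbf{I})$ from Lemma~\ref{lemma: congruence6}, whereas the paper proves transitivity of $S_\theta$ directly; you also spell out the algebraic-constant case, which the paper leaves implicit.
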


\begin{proof} 
In view of Lemma \ref{lemma: congruence1},
it is sufficient to prove that $S_\theta$ is transitive. Suppose $\I$ is a chain. Assume w.l.o.g. $i \le j \le k$. We have two cases: $(i,j),(j,k) \in S_{\theta}$ and $(i,k), (k,j) \in S_\theta$. In the first case, by Lemma \ref{lemma: congruence1}, we have $(i,k)=(i,i\vee k) \in S_{\theta}$, while in the second one $(i,j)=(i,k \vee j) \in S_{\theta}$. \\
\noindent
Suppose that the algebraic language of $\PLA$ contains constants. We show, more in general, that, if for every $ i,j,k\in I: Im(p_{ii\vee j \vee k}) \cap Im(p_{ki\vee j \vee k}) \neq \emptyset$ then $S_{\theta}$ is transitive. Indeed, suppose $(i,j),(j,k) \in S_{\theta}$, then by Lemma $\ref{lemma: congruence1}$ we have $(i, i \vee k), (k, i \vee j) \in S_{\theta}$. Since $S_{\theta}$ preserves joins and is reflexive, from the previous we deduce $(i, i \vee j \vee k), (k, i \vee j \vee k) \in S_{\theta}$. Let $c \in Im(p_{ii\vee j \vee k}) \cap Im(p_{ki\vee j \vee k})$, then by definition there exist $a \in A_i, b \in A_k$ such that $p_{ii\vee j \vee k}(a)=c=p_{k i \vee j \vee k}(b)$. By Lemma \ref{lemma: congruence1} we have $(a,c)=(a, p_{ii\vee j \vee k}(a)) \in \theta$ and $(b,c)=(b,p_{ki\vee j \vee k}(a)) \in \theta$, so $(a,b) \in \theta$ by transitivity of $\theta$. Therefore $(i,k) \in S_{\theta}$. In case $\PLA$ contains constants then the above condition is clearly satisfied, thus $S_{\theta}$ is a congruence.  
        \end{proof}

In the following lemma, we analyze the structure of the fibers of $\theta$.

\begin{lemma}\label{lemma: congruence4}
    Let $\theta \in Con(\PLA)$, then: 
    \begin{itemize}
        \item[(i)]  for every $(i,j) \in I \times I $: $ \theta_{ii} \subseteq (p_{ii\vee j} \times p_{ii\vee j})^{-1}(\theta_{i\vee j,i\vee j})$;
        \item[(ii)] for every $i\in I$ and  $(i,j) \in S_\theta $: $ \theta_{ii} = (p_{ii\vee j} \times p_{ii\vee j})^{-1}(\theta_{i\vee j,i\vee j})$;

        \item[(iii)] for every $(i,j) \in S_\theta: \theta_{ij}=(p_{ii\vee j} \times p_{ji\vee j})^{-1}(\theta_{i\vee j,i\vee j})$.
    \end{itemize}

\end{lemma}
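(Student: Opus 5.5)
The three items can be proved in order, each using only that $\theta$ is compatible with the term operation $x\cdot y$ (which acts as the partition function on $\PLA$), together with Lemma \ref{lemma: congruence1}.

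For (i), take $(a,b)\in\theta_{ii}$ and any $c\in A_j$. Compatibility of $\theta$ with $\cdot^{\PLA}$ gives $(a\cdot^{\PLA} c,\, b\cdot^{\PLA} c)\in\theta$. By the definition of the operations in a P\l onka sum and the fact that $\V\models x\cdot y\approx x$, we have $a\cdot^{\PLA} c = p_{i,i\vee j}(a)\cdot^{\A_{i\vee j}} p_{j,i\vee j}(c)=p_{i,i\vee j}(a)$, and similarly $b\cdot^{\PLA} c=p_{i,i\vee j}(b)$. Hence the pair $(p_{i,i\vee j}(a),p_{i,i\vee j}(b))$ lies in $\theta_{i\vee j,i\vee j}$, which is exactly (i).

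For (ii), one inclusion is (i), so we prove the reverse. Assume $(i,j)\in S_\theta$ and $(p_{i,i\vee j}(a),p_{i,i\vee j}(b))\in\theta$ with $a,b\in A_i$. Lemma \ref{lemma: congruence1} gives $(a,p_{i,i\vee j}(a))\in\theta$ and $(b,p_{i,i\vee j}(b))\in\theta$. Symmetry and transitivity of $\theta$ then give $(a,b)\in\theta$, and this pair lies in $\theta_{ii}$.

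For (iii), fix $(i,j)\in S_\theta$. By symmetry, $(j,i)\in S_\theta$ as well, so Lemma \ref{lemma: congruence1} applies to both indices: $(a,p_{i,i\vee j}(a))\in\theta$ for all $a\in A_i$ and $(b,p_{j,i\vee j}(b))\in\theta$ for all $b\in A_j$.
\begin{itemize}
\item[($\subseteq$)] If $(a,b)\in\theta_{ij}$, chain $p_{i,i\vee j}(a)\,\theta\, a\,\theta\, b\,\theta\, p_{j,i\vee j}(b)$. The resulting pair has both coordinates in $A_{i\vee j}$, so it lies in $\theta_{i\vee j,i\vee j}$.
\item[($\supseteq$)] If $(p_{i,i\vee j}(a),p_{j,i\vee j}(b))\in\theta$, chain $a\,\theta\, p_{i,i\vee j}(a)\,\theta\, p_{j,i\vee j}(b)\,\theta\, b$ to get $(a,b)\in\theta_{ij}$.
\end{itemize}
Alternatively, the $\subseteq$ direction follows by multiplying $(a,b)$ on the right by an element of $A_j$ and an element of $A_i$ in turn.

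The only delicate step is the identity $a\cdot^{\PLA} c=p_{i,i\vee j}(a)$ used in (i). It relies on the fibres lying in $\V$, so that $x\cdot y\approx x$ holds inside $\A_{i\vee j}$, and on the term $x\cdot y$ being computed in the P\l onka sum by first transporting both arguments to the fibre $\A_{i\vee j}$ (this is the content of Lemma \ref{lemma: int} for terms). Once this is in place, everything else is the transitivity and symmetry of $\theta$, which is why (ii) and (iii) need the hypothesis $(i,j)\in S_\theta$ and (i) does not.
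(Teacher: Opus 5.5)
Your proof is correct and follows the paper's argument essentially verbatim: (i) by right-multiplying $(a,b)$ by an element of $A_j$, and (ii)--(iii) by combining Lemma~\ref{lemma: congruence1} (for both $(i,j)$ and $(j,i)$) with symmetry and transitivity of $\theta$. The only imprecision is a citation in your last paragraph. The lemma on terms over semilattices of subalgebras only locates the value in $A_{i\vee j}$. The formula $a\cdot^{\PLA}c=p_{i,i\vee j}(a)$ instead rests on the standard fact that a term in which all its variables occur is evaluated in $\A_{i\vee j}$ after transporting its arguments (an easy induction on terms, valid since both $x$ and $y$ occur in $x\cdot y$); the paper takes this for granted via the partition-function description of the transition maps in Theorem~\ref{th: Plonka dec. theorem}.
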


\begin{proof}
(i) Let $(a,b)\in \theta_{ii}$, i.e. $(a,b)\in\theta$ with $a,b\in A_{i}$. Then, for any $c\in A_j$, we have $(p_{ii\vee j}(a),p_{ii\vee j}(b))=(a\cdot^{\PLA} c,b\cdot^{\PLA} c)\in\theta$, showing the desired inclusion.\\
    \noindent 
(ii) Let $(i,j) \in S_{\theta}$ and suppose $(a,b) \in (p_{ii\vee j} \times p_{ii\vee j})^{-1}(\theta_{i\vee j, i \vee j})$, that is $(p_{ii\vee j}(a),p_{ii\vee j}(b)) \in \theta_{i\vee j, i \vee j}$. By Lemma \ref{lemma: congruence1} we also have $(a, p_{ii\vee j}(a)) \in \theta$ and $(b, p_{ii\vee j}(b)) \in \theta$, therefore by transitivity of $\theta$ we deduce $(a,b) \in \theta_{ii}$. So for every $(i,j) \in S_{\theta}: (p_{ii\vee j} \times p_{ji\vee j})^{-1}(\theta_{i\vee j, i \vee j}) \subseteq \theta_{ii}$.\\
\noindent
(iii) Suppose $(i,j)\in S_{\theta}$. Let $(a,b) \in \theta_{ij}$. By Lemma \ref{lemma: congruence1} we also have $(a, p_{ii\vee j}(a)), (b, p_{ji\vee j}(b)) \in \theta$, so $(p_{ii\vee j}(a), p_{ji\vee j}(b)) \in \theta$, that is $(a,b) \in (p_{ii\vee j} \times p_{ji\vee j})^{-1}(\theta_{i\vee j, i \vee j})$. Conversely, let $(a,b) \in (p_{ii\vee j} \times p_{ji\vee j})^{-1}(\theta_{i\vee j, i \vee j})$, then $(p_{ii\vee j}(a), p_{ji\vee j}(b)) \in \theta_{i\vee j, i \vee j}$, but also $(a,p_{ii\vee j}(a)), (b, p_{ji\vee j}(b)) \in \theta$ by Lemma \ref{lemma: congruence1}, so $(a,b) \in \theta_{ij}$ by symmetry and transitivity of $\theta$.
\end{proof} 

For every $i\in I$, we will call the sets $\theta_{ii}$ \emph{pure fibers} and $\theta_{ij}$ with $(i,j) \in S_{\theta} \setminus\Delta_{\mathbf{I}}$ \emph{mixed fibers}. Note that the use of the term fibers is in accordance with the fact that, for $\theta \le \PLA \times \PLA \in R(\mathcal{V})$,  such subsets effectively constitutes the fibers (in $\mathcal{V}$) of the P\l onka decomposition of $\theta$ as an algebra in $R(\mathcal{V})$.\\

The following technical lemma is a sort of converse of the results proved in so far, namely it shows how to construct the objects encountered so far, starting from a family of congruences on the fibers of a P\l onka sum.
 

\begin{lemma}\label{lemma: congruence5}
Let $\PLA\in R(\V)$, 
$\{\theta_{ii}\}_{i\in I}$ be a family of congruences of the fibers ($\theta_{ii} \in Con(\A_i)$ for every $i\in I$) and for every $ i,j\in I $, $ \theta_{ii} \subseteq (p_{ii\vee j} \times p_{ii\vee j})^{-1}(\theta_{i \vee j, i \vee j})$. Then the following hold:

    \begin{itemize}
        \item[(i)] if $S$ is a reflexive, symmetric and upper transitive subsemilattice of $\mathbf{I \times I}$, then 
        $C_S=\{(i,j) \in I \times I \mid \{(i, i \vee j), (j, i \vee j)\} \subseteq S\} \in Con(\mathbf{I}).$
        \item[(ii)] If $C \in Con(\mathbf{I})$, then $S_{C} = \{(i,j)\in C \;|\; (p_{ii\vee j} \times p_{ji\vee j})^{-1}(\theta_{i\vee j, i \vee j}) \neq \emptyset \}$
is a reflexive, symmetric and upper transitive subsemilattice of $\mathbf{I \times I}$.
        \item[(iii)] $C_{S_C}=C$ for every $C \in Con(\I)$, and for every $ S \le \mathbf{I \times I}$ reflexive, symmetric and upper transitive, if $ (i,j) \in S $ imply $ (p_{ii\vee j} \times p_{ji\vee j})^{-1}(\theta_{i\vee j,i\vee j}) \neq \emptyset$, then $S_{C_S}=S$.

    \end{itemize}
\end{lemma}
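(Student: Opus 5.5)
Part (i) repeats the argument of Lemma \ref{lemma: congruence6}(i), which only used that $S_\theta$ is a reflexive, symmetric, upper transitive subsemilattice. Reflexivity and symmetry of $C_S$ are immediate. For transitivity, take $(i,j),(j,k)\in C_S$, so $(i,i\vee j),(j,i\vee j),(j,j\vee k),(k,j\vee k)\in S$. Since $S$ is a subsemilattice, $(i\vee j, i\vee j\vee k)\in S$; upper transitivity applied to $(i,i\vee j),(i\vee j,i\vee j\vee k)$ then gives $(i,i\vee j\vee k)\in S$, and symmetrically $(k,i\vee j\vee k)\in S$. One more application of upper transitivity, e.g. to $(i,i\vee j\vee k)$ and $(i\vee j\vee k,k)$, gives $(i,i\vee k)\in S$, and likewise $(k,i\vee k)\in S$. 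Compatibility with joins follows by joining the four witnessing pairs coordinatewise.

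For (ii), reflexivity of $S_C$ is clear since $(i,i)\in C$ and the preimage contains $\Delta_{A_i}$; symmetry is clear since $C$ and $\theta_{i\vee j,i\vee j}$ are symmetric. For the subsemilattice property, let $(i,j),(u,v)\in S_C$ with witnesses $(a,b)$ and $(c,d)$, and put $k=i\vee j\vee u\vee v$. Pushing $(p(a),p(b))\in\theta_{i\vee j,i\vee j}$ into the fiber $k$ uses the hypothesis $\theta_{ii}\subseteq(p\times p)^{-1}(\theta_{i\vee j,i\vee j})$ together with $p_{ik}=p_{jk}\circ p_{ij}$. Combining the two pushed pairs with the binary operation $x\cdot y$ inside $\A_k$, where $\theta_{kk}$ is a congruence, gives a pair witnessing $(i\vee u,j\vee v)\in S_C$; here $x\cdot y$ acts as first projection in $\A_k\in\V$, so the product is simply $p(a)$ related to $p(b)$ pushed to $k$.

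Upper transitivity in (ii) needs a little care. Given $(i,j),(j,k)\in S_C$, we have $(i,i\vee k)\in C$ because $C$ is a congruence. We need a pair $(a,a')$ with $a\in A_i$, $a'\in A_{i\vee k}$ whose images are $\theta_{i\vee k,i\vee k}$-related. Take $a'=p_{i,i\vee k}(a)$ for any $a\in A_i$: both images equal $p_{i,i\vee k}(a)$, and $\theta_{i\vee k,i\vee k}$ is reflexive. So $(i,i\vee k)\in S_C$, and in fact every pair $(i,l)$ with $i\le l$ lies in $S_C$ whenever it lies in $C$.

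For (iii), $S_C\subseteq C$ gives $C_{S_C}\subseteq C$, because $C$ is a transitive congruence containing $(i,i\vee j)$ and $(j,i\vee j)$. Conversely, if $(i,j)\in C$ then $(i,i\vee j),(j,i\vee j)\in C$ and, by the observation above, they lie in $S_C$; hence $(i,j)\in C_{S_C}$. For the second claim, $S\subseteq C_S$ since a subsemilattice containing $(i,j)$ and $(i,i)$, $(j,j)$ contains $(i,i\vee j)$ and $(i\vee j,j)$. So $S\subseteq S_{C_S}$ by the hypothesis on nonempty preimages. The reverse inclusion, $S_{C_S}\subseteq S$, is the main obstacle: a pair in $C_S$ with nonempty preimage must be shown to lie in $S$. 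As stated, with $S$ arbitrary, this seems not derivable from the preimage condition alone, so I would first try to read the hypothesis as an equivalence, i.e. $(i,j)\in S$ iff the preimage is nonempty, mirroring Lemma \ref{lemma: congruence6}(ii). If that is the intended meaning, equality follows directly: $S_{C_S}$ consists of the pairs in $C_S$ with nonempty preimage, which are exactly the pairs of $S$.
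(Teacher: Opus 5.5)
Your arguments for (i), (ii) and $C_{S_C}=C$ are correct and follow the paper's route. Part (i) is the computation of Lemma~\ref{lemma: congruence6}(i) carried out for an abstract $S$. Join-closure in (ii) comes from pushing a witness along transition maps using the hypothesis on the $\theta_{ii}$. The inclusion $C\subseteq C_{S_C}$ rests on the fact that $(a,p_{i,i\vee k}(a))$ always lies in the relevant preimage, because $\theta_{i\vee k,i\vee k}$ is reflexive; you use the same fact for upper transitivity.

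One point to tighten: in the join step the witness must lie in $A_{i\vee u}\times A_{j\vee v}$, not in $A_k\times A_k$. Take $(p_{i,i\vee u}(a),p_{j,j\vee v}(b))$, which is $(a\cdot c,b\cdot d)$ computed in $\PLA$. Its images in $\A_k$ are $p_{ik}(a)$ and $p_{jk}(b)$, so the witness $(c,d)$ is not even needed. Also note that $(i\vee u,j\vee v)\in C$ because $C$ is a congruence.

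Your doubt about $S_{C_S}\subseteq S$ is justified. The paper's argument for (iii) only proves $C_{S_C}=C$, and the second assertion is false as stated, so no proof can close that step. Here is a counterexample:
\begin{itemize}
\item[(a)] Let $\I$ be the diamond $\{0,i,j,k\}$ with $0<i,j<k$, and let every fiber be a trivial algebra of $\V$. Then every preimage $(p_{u,u\vee v}\times p_{v,u\vee v})^{-1}(\theta_{u\vee v,u\vee v})$ is nonempty.
\item[(b)] Put $S=\Delta_{\I}\cup\{(i,k),(k,i),(j,k),(k,j)\}$. Every coordinatewise join of two pairs of $S$ is again in $S$.
\item[(c)] For $(a,b),(b,c)\in S$, the pair $(a,a\vee c)$ is diagonal, $(i,k)$ or $(j,k)$. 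So $S$ is a reflexive, symmetric, upper transitive subsemilattice, and the hypothesis of (iii) holds trivially.
\item[(d)] But $(i,k),(j,k)\in S$ gives $(i,j)\in C_S$ with nonempty preimage, so $(i,j)\in S_{C_S}\setminus S$.
\end{itemize}

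Your proposed repair reads the hypothesis as ``$(i,j)\in S$ iff the preimage is nonempty'' on all of $I\times I$. This does make the equality immediate, but it is not what Lemma~\ref{lemma: congruence6}(ii) provides, and it fails in the case that matters. For $\theta=\Delta_{\A}$ one has $S_\theta=\Delta_{\I}$, while for $i<j$ the preimage contains $(a,p_{ij}(a))$. The faithful analogue of Lemma~\ref{lemma: congruence6}(ii) keeps the conjunct $(i,j)\in C_S$, and then $S_{C_S}=S$ is just a restatement of the hypothesis.

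This defect is harmless for the paper. In Theorem~\ref{theorem: congruence5}(3) the equality $S_\theta=S_{C_\theta}$ follows directly from Lemma~\ref{lemma: congruence6}(ii), and the converse direction only uses $C_{S_C}=C$.
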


\begin{proof}
(i) The proof runs exactly as that of of Lemma \ref{lemma: congruence6}-(i). \\
\noindent
(ii) Observe that, for every $ (i,j), (h,k) \in I \times I $ such that $ (p_{i,i\vee j} \times p_{j,i\vee j})^{-1}(\theta_{i\vee j, i \vee j}) \neq \emptyset $ it holds that $ (p_{i \vee h, i \vee j \vee h \vee k} \times p_{j \vee k, i \vee j \vee h \vee k})^{-1}(\theta_{i\vee j \vee h \vee k, i \vee j \vee h \vee k}) \neq \emptyset.$ In fact, by hypothesis $\theta_{i\vee j, i \vee j} \subseteq (p_{i\vee j, i \vee j \vee h \vee k} \times p_{i\vee j, i \vee j \vee h \vee k})^{-1}(\theta_{i\vee j \vee h \vee k, i \vee j \vee h \vee k})$, so $\emptyset \neq (p_{ii\vee j} \times p_{ji\vee j})^{-1}(\theta_{i\vee j, i \vee j}) \subseteq (p_{ii\vee j \vee h \vee k} \times p_{ji\vee j \vee h \vee k})^{-1}(\theta_{i\vee j \vee h \vee k, i \vee j \vee h \vee k})=(p_{ii\vee h} \times p_{jj\vee k})^{-1}((p_{i\vee h,i\vee j \vee h \vee k} \times p_{j\vee k, i \vee j \vee h \vee k})^{-1}(\theta_{i\vee j \vee h \vee k, i \vee j \vee h \vee k}))$, therefore \\
\noindent 
$(p_{i\vee h,i\vee j \vee h \vee k} \times p_{j\vee k, i \vee j \vee h \vee k})^{-1}(\theta_{i\vee j \vee h \vee k, i \vee j \vee h \vee k}) \neq \emptyset$. It follows immediately that $S_C$ is a reflexive, symmetric and upper transitive subsemilattice of $\mathbf{I \times I}$.\\
\noindent
(iii) By definition we have that $(i,j) \in C_{S_C} $ if and only if $ \{(i, i \vee j), (j, i \vee j)\} \subseteq S_C $, that is $ (i,i \vee j), (j, i \vee j) \in C $ and $(p_{ii\vee j} \times p_{i\vee j,i\vee j})^{-1}(\theta_{i\vee j, i \vee j}) \neq \emptyset $, $(p_{ji\vee j} \times p_{i\vee j, i \vee j})^{-1}(\theta_{i\vee j, i \vee j}) \neq \emptyset$, which is equivalent (just) to $ (i,i \vee j),(j, i \vee j) \in C $ (indeed for $ a\in A_i$ and $ b \in A_j$ we have $(a, p_{ii\vee j}(a)) \in (p_{ii\vee j} \times p_{i\vee j,i\vee j})^{-1}(\theta_{i\vee j, i \vee j})$ and $(b, p_{ji\vee j}(b)) \in (p_{ji\vee j} \times p_{i\vee j,i\vee j})^{-1}(\theta_{i\vee j, i \vee j})$). Finally, the fact that $C$ is a congruence of $\I$ guarantees that $(i,j) \in C$.
\end{proof}

We are now ready to provide a characterization of the congruences of a P\l onka sum. In the statement we will adopt the nomenclature introduced in Lemma \ref{lemma: congruence5}. We first provide the following.

\begin{definition} \label{def: congsist}
    Let $\mathbb{A}=(\{\A_i\}_{i\in I}, (I, \le), \{p_{ij}\}_{i \le j})$ be a semilattice direct system. A \emph{congruence} on $\mathbb{A}$ is a pair $(C, \{\theta_{ii}\}_{i\in I})$ satisfying the following conditions:

        \begin{itemize}
        \item[(i)] $C \in Con(\mathbf{I})$;
        \item[(ii)] $ \theta_{ii} \in Con(\A_i)$ for every $i\in I$;
        \item[(iii)] $\theta_{ii} \subseteq (p_{ii\vee j} \times p_{ii\vee j})^{-1}(\theta_{i\vee j,i\vee j})$ for every $(i,j)\in I\times I$;
        \item[(iv)] $\theta_{ii} = (p_{ii\vee j} \times p_{ii\vee j})^{-1}(\theta_{i\vee j,i\vee j})$ for every $(i,j) \in S_C$.
    \end{itemize}

 We denote with $Con(\mathbb{A})$ the set of congruences on $\mathbb{A}$. It is immediate to check that $Con(\mathbb{A})$ forms a (algebraic) lattice when equipped with the partial order relation given by $(C, \{\theta_{ii}\}_{i\in I}) \subseteq (C', \{\theta_{ii}'\}_{i\in I})$ if and only if $C \subseteq C'$ and $\theta_{ii} \subseteq \theta_{ii}'$ for every $i\in I$. 
    
\end{definition}



\begin{theorem}\label{theorem: congruence5}
Let $\PLA\in R(\V)$. 
Then the lattices $Con(\A)$ and $Con(\mathbb{A})$ are isomorphic. More precisely: 
\begin{enumerate}
    \item if $\theta$ is a congruence on $\PLA$, then $(C_\theta, \{\theta_{ii}\}_{i\in I})$, where $C_\theta$ and $\theta_{ii}$ for every $i\in I$ are as in Definition \ref{def: cong1}, is a congruence on $\mathbb{A}$.
    \item if $(C, \{\theta_{ii}\}_{i\in I})$ is a congruence on $\mathbb{A}$, let $ \theta_{ij}:=(p_{ii\vee j} \times p_{ji\vee j})^{-1}(\theta_{i\vee j,i \vee j})$ for every $(i,j) \in S_C\setminus\Delta_{\mathbf{I}}$, then
    
    \begin{center}
        $\displaystyle \theta:= \bigcup_{(i,j) \in S_{C}}{\theta_{ij}}$
    \end{center} 
    
    \noindent is a congruence on $\PLA$.
    \item The two previous constructions define reciprocally inverse lattice isomorphisms.
    
\end{enumerate}
  \ 

    
    
\end{theorem}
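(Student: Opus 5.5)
Part (1) is essentially already done by the preceding lemmas, so I would settle it first. Given $\theta\in Con(\PLA)$, Lemma \ref{lemma: congruence6}(i) gives $C_\theta\in Con(\I)$. Each $\theta_{ii}$ is a congruence of $\A_i$: the non-nullary operations of $\A_i$ are restrictions of those of $\PLA$. For a constant $c$, its interpretation in $\A_i$ is $p_{i_0 i}(c^{\A})=c^{\A}\cdot x$ for $x\in A_i$. Compatibility of this constant with $\theta_{ii}$ is trivial, since a constant only contributes the reflexive pair. Condition (iii) of Definition \ref{def: congsist} is Lemma \ref{lemma: congruence4}(i). For condition (iv) I need $S_{C_\theta}\subseteq S_\theta$; the reverse inclusion is not needed here. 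Take $(i,j)\in C_\theta$ with a nonempty preimage of $\theta_{i\vee j,i\vee j}$. By Lemma \ref{lemma: congruence6}(ii) we get $(i,j)\in S_\theta$, so in fact $S_{C_\theta}=S_\theta$. Then Lemma \ref{lemma: congruence4}(ii) yields (iv).

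For part (2), define $\theta$ as in the statement, with $\theta_{ii}$ on the diagonal. Note that $(i,i)\in S_C$ always holds, and the formula $\theta_{ij}=(p_{ii\vee j}\times p_{ji\vee j})^{-1}(\theta_{i\vee j,i\vee j})$ also returns $\theta_{ii}$ when $j=i$. Reflexivity and symmetry are immediate.

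Transitivity is the step I expect to be the main obstacle. Let $(a,b)\in\theta_{ij}$ and $(b,d)\in\theta_{jk}$ with $a\in A_i$, $b\in A_j$, $d\in A_k$, and put $m=i\vee j\vee k$. I would push everything to $m$ using (iii): we get $(p_{im}(a),p_{jm}(b))\in\theta_{mm}$, and similarly for $(b,d)$. Hence $(p_{im}(a),p_{km}(d))\in\theta_{mm}$. It remains to pull this back to $i\vee k$. From $(i,j),(j,k)\in C$ we get $(i\vee k,m)\in C$, and $(p_{i\vee k,m}\times p_{i\vee k,m})^{-1}(\theta_{mm})\ni(p_{i,i\vee k}(a),p_{i,i\vee k}(a))$ is nonempty, so $(i\vee k,m)\in S_C$. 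Condition (iv) then gives $\theta_{i\vee k,i\vee k}=(p_{i\vee k,m}\times p_{i\vee k,m})^{-1}(\theta_{mm})$. Consequently $(p_{i,i\vee k}(a),p_{k,i\vee k}(d))\in\theta_{i\vee k,i\vee k}$, which shows both $(i,k)\in S_C$ and $(a,d)\in\theta_{ik}$.

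Compatibility with an $n$-ary operation $g$ goes the same way. Take $(a_l,b_l)\in\theta_{i_lj_l}$, put $i=\bigvee i_l$ and $j=\bigvee j_l$, and note $(i,j)\in C$ because $C$ is a congruence of $\I$. Pushing each pair to $m=i\vee j$ via (iii), and using that $\theta_{mm}$ is a congruence of $\A_m$, gives
\[
(g^{\A_m}(p_{i_1m}(a_1),\dots),\,g^{\A_m}(p_{j_1m}(b_1),\dots))\in\theta_{mm}.
\]
This pair equals $(p_{im}(g(\bar a)),p_{jm}(g(\bar b)))$. Now pull back to $i\vee j=m$; no extra step is needed since $m=i\vee j$ already. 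So $(g(\bar a),g(\bar b))\in\theta_{ij}$, and the preimage is nonempty, whence $(i,j)\in S_C$. Constants are trivial.

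For part (3), both maps are monotone, which is clear from the definitions, so it suffices to show they are mutually inverse. Starting from $\theta$, Lemma \ref{lemma: congruence4}(iii) together with $S_{C_\theta}=S_\theta$ shows that $\bigcup_{S_\theta}\theta_{ij}=\theta$. Starting from $(C,\{\theta_{ii}\})$, let $\theta$ be the congruence built in (2). Its pure fibers are the given $\theta_{ii}$, because the pieces $\theta_{ij}$ sit in $A_i\times A_j$, which are disjoint for distinct index pairs. Its $S_\theta$ is exactly $S_C$, so $C_\theta=C_{S_C}=C$ by Lemma \ref{lemma: congruence5}(iii). A monotone bijection with monotone inverse is a lattice isomorphism.
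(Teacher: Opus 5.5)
Your proposal is correct and follows essentially the same route as the paper. Part (1) goes through Lemmas \ref{lemma: congruence6} and \ref{lemma: congruence4} using $S_{C_\theta}=S_\theta$, part (2) pushes pairs up to $i\vee j\vee k$ (resp.\ $i\vee j$) with condition (iii) and pulls back with condition (iv) (the paper merely isolates the push-up as its preliminary claim $(\ast)$), and part (3) uses Lemma \ref{lemma: congruence4}(iii), disjointness of fibers and Lemma \ref{lemma: congruence5}(iii). Two small slips to fix: the nonemptiness witness for $(i\vee k,m)\in S_C$ should lie in $(p_{i\vee k,m}\times p_{mm})^{-1}(\theta_{mm})$ (e.g.\ $(p_{i,i\vee k}(a),p_{im}(a))$), and you should state that $(i,k)\in C$ by transitivity of $C$ before concluding $(i,k)\in S_C$.
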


\begin{proof} 
(1) Suppose $\theta \in Con(\PLA)$ and let $C_\theta$ and $\{\theta_{ii}\}_{i\in I}$ be as in Definition \ref{def: cong1}. 
We show that the pair $(C_{\theta},\{\theta_{ii}\}_{i\in I})$ satisfies conditions (i)-(iv) in Definition \ref{def: congsist}.\\
\noindent
(i) follows from Lemma \ref{lemma: congruence6}-(i). \\
\noindent
(ii) follows easily by the assumption that $\theta$ is a congruence on $\PLA$. \\
                 \noindent      (iii)-(iv) follow by Lemma \ref{lemma: congruence4}. \\
          
\noindent (2) Suppose $(C, \{\theta_{ii}\}_{i\in I}\})$ is a congruence on $\mathbb{A}$. We preliminarily prove that
\begin{equation}\label{eq: claim prova Th.8}
  \text{ for every } (i_1,j_1), (i_2, j_2) \in S_C,  \theta_{i_1j_1} \subseteq (p_{i_1,i_1 \vee i_2} \times p_{j_1, j_1 \vee j_2})^{-1}(\theta_{i_1 \vee i_2,j_1 \vee j_2}).  \tag{$\ast$}
\end{equation}
 Indeed: 
\begin{align*}
     \theta_{i_1j_1} &=(p_{i_1i_1\vee j_1} \times p_{j_1i_1 \vee j_1})^{-1}(\theta_{i_1\vee j_1, i_1 \vee j_1}) & (\text{definition}) \\
     & \subseteq (p_{i_1i_1\vee j_1} \times p_{j_1i_1 \vee j_1})^{-1}((p_{i_1 \vee j_1, i_1 \vee j_1 \vee i_2 \vee j_2} \times {p_{i_1 \vee j_1, i_1 \vee j_1 \vee i_2 \vee j_2}})^{-1}(\theta_{i_1 \vee i_2 \vee j_1 \vee j_2, i_1 \vee i_2 \vee j_1 \vee j_2})) \\
     & = (p_{i_1,i_1 \vee i_2 \vee j_1 \vee j_2} \times p_{j_1,i_1 \vee i_2 \vee j_1 \vee j_2})^{-1}(\theta_{i_1 \vee i_2 \vee j_1 \vee j_2, i_1 \vee i_2 \vee j_1 \vee j_2}) \\
     & =(p_{i_1,i_1 \vee i_2} \times p_{j_1, j_1 \vee j_2})^{-1}((p_{i_1 \vee i_2, i_1 \vee i_2 \vee j_1 \vee j_2} \times p_{j_1 \vee j_2, i_1 \vee i_2 \vee j_1 \vee j_2})^{-1}(\theta_{i_1 \vee i_2 \vee j_1 \vee j_2, i_1 \vee i_2 \vee j_1 \vee j_2})) \\
     &=(p_{i_1,i_1 \vee i_2} \times p_{j_1, j_1 \vee j_2})^{-1}(\theta_{i_1 \vee i_2,j_1 \vee j_2}),
\end{align*}
where we have simply applied the properties of composition between transition morphisms. 
We can now prove that $\theta\in Con(\PLA)$. Reflexivity follows immediately from the fact that $S_C$ and $\theta_{ii}$ (for every $i\in I$) are reflexive. To see that $\theta$ is symmetric, let $\displaystyle (a,b) \in \theta=\bigcup_{(i,j) \in S_C}{\theta_{ij}}$, then there exists $(i,j)\in S_C$ such that $(a,b) \in \theta_{ij}=(p_{ii\vee j} \times p_{ji\vee j})^{-1}(\theta_{i\vee j, i\vee j})$. Therefore $(p_{iivj}(a), p_{ji\vee j}(b)) \in \theta_{i\vee j, i \vee j}$, so by the symmetry of $\theta_{i\vee j, i \vee j}$ we also have $(p_{ji\vee j}(b), p_{ii\vee j}(a)) \in \theta_{i\vee j, i \vee j}$, that is $(b,a) \in (p_{ji\vee j} \times p_{ii\vee j})^{-1}(\theta_{i\vee j, i \vee j})=\theta_{ji} \subseteq \theta$.  \\
\noindent
To see that $\theta$ is transitive, let $(a,b),(b,c) \in \theta$; then there exist $(i,j),(j,k) \in S_C$ such that $(a,b)\in \theta_{ij}, (b,c) \in \theta_{jk}$. The fact that $(i,j),(j,k) \in S_C $ implies, by Lemma \ref{lemma: congruence5}, $ (i,i \vee k),(k,i \vee k) \in S_C$ and $(i,i\vee j \vee k), (j, i \vee j \vee k), (k, i \vee j \vee k) \in S_C$. By applying (\ref{eq: claim prova Th.8}), we have $\theta_{ij} \subseteq (p_{ii\vee j\vee k} \times p_{ji\vee j \vee k})^{-1}(\theta_{i\vee j \vee k, i \vee j \vee k})$ and $\theta_{jk} \subseteq (p_{ji\vee j\vee k} \times p_{ki\vee j \vee k})^{-1}(\theta_{i\vee j \vee k, i \vee j \vee k})$, therefore $(p_{ii\vee j \vee k}(a), p_{ji\vee j \vee k}(b)), (p_{ji\vee j \vee k}(b), p_{ki\vee j \vee k}(c)) \in \theta_{i\vee j \vee k, i \vee j \vee k} $ and (using the fact that $\theta_{i\vee j \vee k, i \vee j \vee k} $ is a congruence) $ (a,c) \in (p_{ii\vee j \vee k} \times p_{ki\vee j \vee k})^{-1}(\theta_{i\vee j \vee k, i \vee j \vee k})\neq\emptyset$, 
morevoer (composing the transition morphisms) $(p_{ii\vee j \vee k} \times p_{ki\vee j \vee k})^{-1}(\theta_{i\vee j \vee k, i \vee j \vee k}) = (p_{ii\vee k} \times p_{ki\vee k})^{-1}((p_{i\vee k,i\vee j \vee k} \times p_{i\vee k, i \vee j \vee k})^{-1}(\theta_{i\vee j \vee k, i \vee j \vee k})) = (p_{ii\vee k} \times p_{ki\vee k})^{-1}(\theta_{i\vee k, i \vee k})$ (where the last equality follows by Definition \ref{def: congsist} and the fact $(i\vee k, i\vee j\vee k)\in S_{C}$). Consequently $ (p_{ii\vee k} \times p_{ki\vee k})^{-1}(\theta_{i\vee k, i \vee k})\neq\emptyset $, so 
$(i,k) \in S_C$ and $\theta_{ik}=(p_{ii\vee j \vee k} \times p_{ki\vee j \vee k})^{-1}(\theta_{i\vee j \vee k, i \vee j \vee k})$, thus $(a,c) \in \theta_{ik} \subseteq \theta$. \\
\noindent
To see that $\theta$ preserves arbitrary operations, let $(a_1,b_1) \in \theta_{i_1j_1}, \ldots,(a_n,b_n) \in \theta_{i_nj_n}$ and $f $ be an $n-$ary operation. 
Let $i=i_1 \vee \ldots \vee i_n$ and $j=j_1 \vee \ldots \vee j_n$, then $(i,j) \in S_C$. Using (\ref{eq: claim prova Th.8}), we have that for any $ k\in \{1, \ldots, n\}$: $\theta_{i_1j_1}\subseteq (p_{i_ki} \times p_{j_kj})^{-1}(\theta_{ij})$, so $(p_{i_ki}(a_k), p_{j_kj}(b_k)) \in \theta_{ij}=(p_{ii\vee j} \times p_{ji\vee j})^{-1}(\theta_{i\vee j, i \vee j})$, that is $\forall k\in \{1, \ldots, n\}: (p_{i_k,i\vee j}(a_k), p_{j_k,i\vee j}(b_k)) \in \theta_{i\vee j, i \vee j}$. Since $\theta_{i\vee j, i \vee j} \in Con(\A_{i\vee j})$, we deduce 
$(p_{i,i\vee j} \times p_{j,i\vee j})(f^{\A_i}(p_{i_1i}(a_{i_1}), \ldots, p_{i_ni}(a_{i_n})),f^{\A_j}(p_{j_1j}(b_1),...,p_{j_nj}(b_n)))= $ \\
$= (f^{\A_{i\vee j}}(p_{i,i\vee j}(p_{i_1,i}(a_1)), \ldots, p_{i,i\vee j}(p_{i_n,i}(a_n)), f^{\A_{i\vee j}}(p_{j,i\vee j}(p_{j_1,j}(b_1)), \ldots, p_{j,i\vee j}(p_{j_n,j}(b_n))))=$\\$(f^{\A_{i\vee j}}(p_{i_1,i\vee j}(a_1), \ldots, p_{i_n,i\vee j}(a_n), f^{\A_{i\vee j}}(p_{j_1,i\vee j}(b_1), \ldots, p_{j_n,i\vee j}(b_n))) \in \theta_{i \vee j, i \vee j}$, therefore, by Definition, we have that $(f^{\A_i}(p_{i_1i}(a_{i_1}), \ldots, p_{i_ni}(a_{i_n})),f^{\A_j}(p_{j_1j}(b_1),...,p_{j_nj}(b_n))) \in (p_{i,i\vee j} \times p_{j,i \vee j})^{-1}(\theta_{i\vee j, i \vee j})=\theta_{ij}$, that is $(f^{\PLA}(a_1,\ldots,a_n), f^{\PLA}(b_1, \ldots, b_n)) \in \theta_{ij}\subseteq \theta$.

\noindent (3) If $\theta \in Con(\PLA)$, then (by Lemma \ref{lemma: congruence4}, item (iii)) $\theta_{ij}=(p_{ii\vee j} \times p_{ji\vee j})^{-1}(\theta_{i\vee j,i\vee j})$ for every $(i,j) \in S_{\theta}=S_{C_\theta}$ (this equality is a consequence of Lemma \ref{lemma: congruence6}, item (ii) and the second part of Lemma \ref{lemma: congruence5}, item (iii)), and 
            $$\displaystyle \theta= \theta \cap (A\times A)=\theta \cap \bigcup_{(i,j)\in I\times I}{(A_i \times A_j)} =\bigcup_{(i,j) \in I \times I}{[\theta \cap (A_i \times A_j)]}=\bigcup_{(i,j) \in S_{\theta}}{\theta_{ij}}.$$

 \noindent The converse follows by the first part of Lemma \ref{lemma: congruence5}, item (iii).           
\end{proof}



As previously stated, a congruence on $\PLA$ is uniquely determined by the semilattice $S_\theta$ (or by the congruence $C_\theta$, since Lemma \ref{lemma: congruence6} tells us we can always obtain one from the other) and its pure fibers $\{\theta_{ii}\}_{i\in I}$.

\begin{corollary}\label{cor: corollary1}
    Let $\theta, \theta' \in Con(\PLA)$. Then $\theta=\theta'$ if and only if $S_{\theta}=S_{\theta'}$ and $ \theta_{ii}=\theta'_{ii}$ for every $i\in I$.
\end{corollary}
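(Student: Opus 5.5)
The forward direction is immediate, since equal relations have equal intersections with every $A_i\times A_j$. So we only need the converse: assuming $S_\theta=S_{\theta'}$ and $\theta_{ii}=\theta'_{ii}$ for every $i\in I$, we show $\theta=\theta'$. We will argue directly from Lemma \ref{lemma: congruence4}, item (iii), without passing through the full lattice isomorphism of Theorem \ref{theorem: congruence5}, although that theorem could also be invoked.

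First we decompose $\theta$ along the fibres of the P\l onka sum. Since $A=\bigcup_{i\in I}A_i$ is a disjoint union,
\[
\theta=\bigcup_{(i,j)\in I\times I}\theta_{ij}=\bigcup_{(i,j)\in S_\theta}\theta_{ij},
\]
because $\theta_{ij}=\emptyset$ precisely when $(i,j)\notin S_\theta$. The same holds for $\theta'$ with $S_{\theta'}$. As $S_\theta=S_{\theta'}$ by hypothesis, both unions run over the same index set, and it suffices to prove $\theta_{ij}=\theta'_{ij}$ for every $(i,j)\in S_\theta$.

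Fix $(i,j)\in S_\theta=S_{\theta'}$. By Lemma \ref{lemma: congruence4}, item (iii), applied to each congruence,
\[
\theta_{ij}=(p_{i,i\vee j}\times p_{j,i\vee j})^{-1}(\theta_{i\vee j,i\vee j})
\quad\text{and}\quad
\theta'_{ij}=(p_{i,i\vee j}\times p_{j,i\vee j})^{-1}(\theta'_{i\vee j,i\vee j}).
\]
The pure fibres satisfy $\theta_{i\vee j,i\vee j}=\theta'_{i\vee j,i\vee j}$ by hypothesis, so the two preimages coincide and $\theta_{ij}=\theta'_{ij}$. This proves $\theta=\theta'$.

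No step here is a real obstacle. The only point needing care is that Lemma \ref{lemma: congruence4}(iii) may be applied to both congruences at the same pair $(i,j)$, and this is exactly what the hypothesis $S_\theta=S_{\theta'}$ guarantees. Equality of $C_\theta$ and $C_{\theta'}$ alone would not suffice: one would have to recover $S_\theta$ from $C_\theta$ and the pure fibres, via Lemma \ref{lemma: congruence6}(ii).
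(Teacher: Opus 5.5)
Your proof is correct and is essentially the paper's argument. The paper derives the corollary from Theorem \ref{theorem: congruence5}, whose part (3) is proved by exactly your two steps: writing $\theta=\bigcup_{(i,j)\in S_\theta}\theta_{ij}$ and recovering each mixed fibre from the pure fibre $\theta_{i\vee j,i\vee j}$ via Lemma \ref{lemma: congruence4}(iii). You extract that injectivity step directly, which also spares you the passage between $S_\theta$ and $C_\theta$ through Lemma \ref{lemma: congruence6}.
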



The description of congruences for algebras in a regularized variety $R(\V)$ can be fruitfully applied when $\V$ is an \emph{ideal determined} variety, as shown in the following.
    \begin{remark}
        Let $\V$ be a variety whose language contains a constants symbol $0$. Recall that a term $p(\vec{x}, \vec{y})$, where $\vec{x}=(x_1, \ldots, x_n)$ and $\vec{y}=(y_1, \ldots, y_m)$, is an \emph{ideal term in} $\vec{y}$ for $\mathcal{V}$ if $\V \models p(\vec{x}, \vec{0}) \approx 0 $ (with $\vec{0} = (0,\dots, 0)$), and that a non-empty subset $I$ of $\A \in \V$ is an \emph{ideal} of $\A$ if we have $p(\vec{a}, \vec{i}) \in I$ for every $\vec{a} \in A^{n},\ \vec{i} \in I^m$ and $p(\vec{x}, \vec{y})$ ideal term in $\vec{y}$ for $\V$. Observe the previous definition is not requiring the variables $x$, $y$ to actually occur in $p$, therefore the constant $0$ is always an ideal term in $\vec{y}$ for $\V$, hence given a congruence $\theta$ on $\A \in \V$ it holds that $[0]_{\theta}$ is an ideal of $\A$. The converse is not true in general, i.e. an ideal $J$ of $\A$ is not necessary the zero-class of a (unique) congruence $\theta\in Con(\A)$.        
     We say that $\V$ is \emph{ideal-determined} if for every algebra $\A \in \V$ and for every ideal $I$ of $\A$ there exists a unique $\theta \in Con(\A)$ such that $I=[0]_{\theta}$. In this case, the map $Con(\A) \rightarrow Id(\A),\ \theta \mapsto [0]_{\theta}$ is a lattice isomorphism. A fundamental result from Gumm and Ursini \cite{GummUrsini, Ursini} establishes that $\V$ is ideal-determined if and only if $\mathcal{V}$ is \emph{$0$-regular} (i.e. for every $\A \in \mathcal{V}$ and for every $\theta, \theta' \in Con(\A)$ if $[0]_{\theta}=[0]_{\theta'}$ then $\theta=\theta'$) and \emph{subtractive} (i.e. there exists a binary term $s(x,y)$, called \emph{subtraction term}, such that $\V\models s(x, 0) \approx x$ and $\V\models s(x,x) \approx 0$). Observe that any subtractive variety $\V$ (hence any ideal-determined variety $\V$) is strongly irregular, since $\V\models s(x, s(y,y))\approx x$, for any $s$ subtraction term for $\V$. Therefore regularizations of ideal-determined varieties fall under the scope of Theorem \ref{theorem: congruence5} and 
        Corollary \ref{cor: corollary1}. Notice that (by regularity) for $R(\V)$ the constant term $0$ is (up to identities) the only ideal term, so ideals for an algebra $\A \in R(\V)$ are just subsets $S \subseteq A$ such that $0^{\A} \in S$. However, at light of Theorem \ref{theorem: congruence5}, it makes sense to introduce a more restrictive notion of ideal for an algebra $\A$ in $R(\V)$ and establish an isomorphism between $Con(\A)$ and the lattice of ideals of $\A$ (ordered by inclusion). Let $\A$ be a P\l onka sum over a semilattice direct system $(\{\A_i\}_{i\in I}, (I, \le), \{p_{ij}\}_{i \le j})$, where $\V$ is an ideal-determined variety, then we say that a \emph{P\l onka ideal} for $\A$ is a pair $(C, \{J_i\}_{i\in I})$, where $C \in Con(\I)$ and $J_i$ is an ideal of $\A_i$ for every $i\in I$ such that $J_i \subseteq (p_{i,i\vee j})^{-1}(J_{i\vee j})$ for every $i,j \in I$ and $J_i=(p_{i,i\vee j})^{-1}(J_{i\vee j})$ if $(i,j) \in C$. We denote by $Id(\A)$ the lattice of P\l onka ideals of $\A$ ordered by $(C, \{J_i\}_{i\in I})\leq (C', \{K_i\}_{i\in I})$ iff $C\subseteq C'$ and $J_i\subseteq K_i$, for every $i\in I$. Theorem \ref{theorem: congruence5} and Corollary \ref{cor: corollary1} establish that the map $\theta \mapsto (S_{\theta}, [0]_{\theta_{ii}})_{i\in I}$ is an isomorphism between $Con(\A)$ and $Id(\A)$.
    \end{remark}


The previous remark allows us to characterize congruences for several regularization of strongly irregular varieties. 

\begin{example}
Using Theorem \ref{theorem: congruence5}, we can characterize the congruences of Clifford semigroups. Let $\mathbb{C}=(\{\mathbf{G}_i\}_{i\in I}, (I, \le), \{p_{ij}\}_{i \le j})$ be a semilattice direct system of groups and $\mathbf{C}$ its P\l onka sum, i.e. a Clifford semigroup. Given $\theta \in Con(\mathbf{C})$, we have that $S_{\theta} = \{(i,j)\in I\times I\; |\; \theta\cap (G_{i}\times G_j)\neq\emptyset \}$ is a congruence on the semilattice $\I$ (by Lemma \ref{corollary: lemma5}) 
and $\theta_{ii} = \theta\cap (G_{i}\times G_{i})$ is a congruence on each group $\G_{i}$, therefore we can identify $\theta_{ii}$ with the correspondent normal subgroup $N_{i}$ of $\G_{i}$ (recall that $N_{i} = [e_{i}]_{\theta_{ii}}$, with $e_i$ the neutral element of $\G_{i}$); moreover, $N_{i}\subseteq p_{i,i\vee j}^{-1}(N_{i\vee j})$, for every $i,j\in I$, and, in particular, $N_{i}= p_{i,i\vee j}^{-1}(N_{i\vee j})$, for $(i,j)\in S_{\theta}$. Theorem \ref{theorem: congruence5} states that we can identify $\theta$ with $(S_{\theta}, \bigcup_{i\in I}{N_i})$. \\
\noindent
Conversely, let $\{N_i\}_{i\in I}$ be a family of normal subgroups (one for each fiber of $\mathbf{C}$) and $S \in Con(\I)$ such that for all $i,j\in I: N_i \subseteq p_{ii\vee j}^{-1}(N_{i\vee j})$ and $N_i = p_{ii\vee j}^{-1}(N_{i\vee j})$ in case $(i,j) \in S$. Then letting 
$$\displaystyle N=\bigcup_{i\in I}{N_i}$$ 
\noindent
and defining $\theta_{N,S}$ as: $ \forall a\in G_i, b\in G_j: (a,b) \in \theta_{N,S} \iff (i,j) \in S \text{ and } ab^{-1} \in N$ we have that $\theta_{N,S} \in Con(\mathbf{C})$.

\end{example}

\begin{example}
    Using Theorem \ref{theorem: congruence5}, we can characterize the congruences of Involutive bisemilattices. Let $\mathbb{B}=(\{\mathbf{B}_i\}_{i\in I}, (I, \le), \{p_{ij}\}_{i \le j})$ be a semilattice direct system of boolean algebras and $\B$ its P\l onka sum, i.e. an involutive bisemilattice. Given $\theta \in Con(\mathbf{B})$, we have that $S_{\theta} = \{(i,j)\in I\times I\; |\; \theta\cap (B_{i}\times B_j)\neq\emptyset \}$ is a congruence on the semilattice $\I$ (by Lemma \ref{corollary: lemma5}) 
and $\theta_{ii} = \theta\cap (B_{i}\times B_{i})$ is a congruence on each boolean algebra $\B_{i}$, therefore we can identify $\theta_{ii}$ with the correspondent filter $F_{i}$ of $\G_{i}$ (recall that $F_{i} = [1_{i}]_{\theta_{ii}}$, with $1_i$ the top element of $\B_{i}$); moreover, $F_{i}\subseteq p_{i,i\vee j}^{-1}(F_{i\vee j})$, for every $i,j\in I$, and, in particular, $F_{i}= p_{i,i\vee j}^{-1}(F_{i\vee j})$, for $(i,j)\in S_{\theta}$. Theorem \ref{theorem: congruence5} states that we can identify $\theta$ with $(S_{\theta}, \bigcup_{i\in I}{F_i})$. \\
\noindent
Conversely, let $\{F_i\}_{i\in I}$ be a family of filters (one for each fiber of $\B$) and $S \in Con(\I)$ such that for all $i,j\in I: F_i \subseteq p_{ii\vee j}^{-1}(F_{i\vee j})$ and $F_i = p_{ii\vee j}^{-1}(F_{i\vee j})$ in case $(i,j) \in S$. Then letting 
$$\displaystyle F=\bigcup_{i\in I}{F_i}$$ 
\noindent
and defining $\theta_{N,S}$ as: $ \forall a\in B_i, b\in B_j: (a,b) \in \theta_{F,S} \iff (i,j) \in S \text{ and } a \leftrightarrow b \in F$ we have $\theta_{F,S} \in Con(\B)$.
\end{example}

\begin{example}
Using Theorem \ref{theorem: congruence5}, we can characterize the congruences of dual weak braces. Let $\mathbb{S}=(\{\mathbf{S}_i\}_{i\in I}, (I, \le), \{p_{ij}\}_{i \le j})$ be a semilattice direct system of skew-braces and $\mathbf{D}$ its P\l onka sum, i.e. a dual weak brace (see \cite{Stefanelli25,CatinoMediterranean}). Given $\theta \in Con(\mathbf{D})$, we have that $S_{\theta} = \{(i,j)\in I\times I\; |\; \theta\cap (S_{i}\times S_j)\neq\emptyset \}$ is a congruence on the semilattice $\I$ (by Lemma \ref{corollary: lemma5}) 
and $\theta_{ii} = \theta\cap (S_{i}\times S_{i})$ is a congruence on each skew-brace $\mathbf{S}_{i}$, therefore we can identify $\theta_{ii}$ with the correspondent (Gumm-Ursini) ideal $J_{i}$ of $\mathbf{S}_{i}$ (recall that $J_{i} = [0_{i}]_{\theta_{ii}}$, with $0_i$ the neutral element of $\mathbf{S}_{i}$); moreover, $J_{i}\subseteq p_{i,i\vee j}^{-1}(J_{i\vee j})$, for every $i,j\in I$, and, in particular, $J_{i}= p_{i,i\vee j}^{-1}(J_{i\vee j})$, for $(i,j)\in S_{\theta}$. Theorem \ref{theorem: congruence5} states that we can identify $\theta$ with $(S_{\theta}, \bigcup_{i\in I}{N_i})$. \\
\noindent
Conversely, let $\{J_i\}_{i\in I}$ be a family of ideals (one for each fiber of $\mathbf{C}$) and $S \in Con(\I)$ such that for all $i,j\in I: J_i \subseteq p_{ii\vee j}^{-1}(J_{i\vee j})$ and $J_i = p_{ii\vee j}^{-1}(J_{i\vee j})$ in case $(i,j) \in S$. Then letting 
$$\displaystyle J=\bigcup_{i\in I}{J_i}$$ 
\noindent
and defining $\theta_{J,S}$ as: $ \forall a\in S_i, b\in S_j: (a,b) \in \theta_{J,S} \iff (i,j) \in S \text{ and } ab^{-1} \in J$ we have that $\theta_{J,S} \in Con(\mathbf{C})$.

\end{example}




With the characterization of congruences ad hand, we can also describe the structure of the P\l onka sum decomposition of a quotient of $\A=\PLA$ (under the same assumption of the section, i.e. $\A\in R(\V)$ with $\V$ a strongly irregular variety).


\begin{proposition}
    Let $\theta \in Con(\A)$. Then $\mathbf{A}/\theta$ is (isomorphic to) the P\l onka sum over the semilattice direct system given by:  
    
    \begin{enumerate}
        \item the semilattice $\mathbf{I}/C_{\theta}$;
        \item for every $i\in I$, the universe of the fiber is $\left (A/\theta \right )_{[i]_{{C_\theta}}}= \bigl\{[a]_{\theta} \mid j \in [i]_{C_{\theta}},\ a \in A_j\bigr\} $;
        \item the transition homomorphisms are, for every $ i, k\in I $ such that $ [i]_{C_\theta} \le [k]_{C_\theta} $, 
        $$ p_{[i]_{C_\theta}[k]_{C_\theta}}([a]_{\theta})=[p_{jj\vee k}(a)]_{\theta},$$
    for any $ j\in [i]_{C_\theta} $ and $ a \in A_j$.      
            \end{enumerate}
\end{proposition}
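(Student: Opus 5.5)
The plan is to use the P\l onka decomposition theorem (Theorem \ref{th: Plonka dec. theorem}) applied to $\A/\theta$, whose partition function is $[a]_\theta\odot[b]_\theta := [a\cdot^{\A} b]_\theta$. This is a partition function because (PF1)--(PF6) are identities of $R(\V)$ (regular identities satisfied by $\A$), so they pass to the quotient. Hence $\A/\theta$ is a P\l onka sum over the system produced by that theorem, and I only need to identify its index semilattice, fibers and transition maps with the data in the statement.

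First, the fibers. Two classes $[a]_\theta,[b]_\theta$ with $a\in A_i$, $b\in A_j$ lie in the same component exactly when $[a\cdot b]_\theta=[a]_\theta$ and $[b\cdot a]_\theta=[b]_\theta$. Since $a\cdot b=p_{i,i\vee j}(a)$, these conditions say $(a,p_{i,i\vee j}(a))\in\theta$ and $(b,p_{j,i\vee j}(b))\in\theta$. This gives $(i,i\vee j),(j,i\vee j)\in S_\theta$, i.e.\ $(i,j)\in C_\theta$. Conversely, if $(i,j)\in C_\theta$, then Lemma \ref{lemma: congruence1} gives exactly these two memberships. So the components are indexed by $I/C_\theta$, and the component of $[i]_{C_\theta}$ is the set $\{[a]_\theta : a\in A_j,\ j\in[i]_{C_\theta}\}$. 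One point needs checking: each class $[a]_\theta$ meets only fibers $A_j$ with $j$ in a single $C_\theta$-class. This holds because $(a,b)\in\theta$ gives $(i,j)\in S_\theta\subseteq C_\theta$ (Lemma \ref{lemma: congruence6}), so the assignment is well defined.

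Second, the order. By part (2) of Theorem \ref{th: Plonka dec. theorem}, $[i]\le[k]$ iff $[b\cdot a]_\theta=[b]_\theta$ for some $a\in A_i$, $b\in A_k$, i.e.\ $(b,p_{k,i\vee k}(b))\in\theta$. This gives $(k,i\vee k)\in S_\theta$, hence $[k]_{C_\theta}=[i\vee k]_{C_\theta}$, which is the order of $\I/C_\theta$. Conversely, if $[i\vee k]=[k]$ in $\I/C_\theta$, then $(k,i\vee k)\in C_\theta$ and therefore $(k,i\vee k)\in S_\theta$ by definition of $C_\theta$, since $k\vee(i\vee k)=i\vee k$. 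So the index semilattice is $\I/C_\theta$, and its join is the induced one because $\cdot$ multiplies indices by joins.

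Third, the transitions. By part (3) of Theorem \ref{th: Plonka dec. theorem}, the map $[a]_\theta\mapsto[a\cdot c]_\theta$ with $c\in A_k$ is the transition, and $a\cdot c=p_{j,j\vee k}(a)$, which is the stated formula. Independence from the choices of $j$, $a$ and $c$ is automatic, since the decomposition theorem guarantees the transition map is well defined. Finally, the fibers are algebras via the operations of $\A/\theta$, and part (4) of Theorem \ref{th: Plonka dec. theorem} yields the isomorphism.

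The main obstacle is the bookkeeping in the fiber identification: turning the abstract component relation into $C_\theta$ requires $C_\theta$ rather than $S_\theta$, since $S_\theta$ may fail to be transitive. The step to watch is the use of upper transitivity through Lemma \ref{lemma: congruence1} and Lemma \ref{lemma: congruence6}. If the type contains constants, I must also check that the least class $[i_0]_{C_\theta}$ contains the constants, which is immediate.
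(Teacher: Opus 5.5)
Your proposal is correct and follows essentially the same route as the paper. You pass the partition function $x\cdot y$ to $\A/\theta$ and identify the component relation there with $C_\theta$ via Lemma \ref{lemma: congruence1}. Your explicit checks of the order, of the well-definedness of the fiber assignment, and of the transition maps spell out what the paper leaves to ``the general theory of P\l onka sums''.
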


\begin{proof}
Let $ i,j\in I $ and $ a\in A_i, b \in A_j$. 
Then $ [a]_{\theta}=[a]_{\theta} \cdot^{\A/\theta}[b]_{\theta}=[a\cdot^{\A} b]_{\theta}=[p_{ii\vee j}(a)]_\theta $ and $
              [b]_{\theta}=[b]_{\theta} \cdot^{\A/\theta}[a]_{\theta}=[b\cdot^{\A} a]_{\theta}=[p_{ji\vee j}(b)]_{\theta} $ if and only if 
              $    (a, p_{ii\vee j}(a)) \in \theta $ and $(b, p_{ji\vee j}(b)) \in \theta $. By Definition of $S_{\theta}$ and Lemma \ref{lemma: congruence1}, this is equivalent to $  \{(i, i \vee j), (j,i \vee j)\} \subseteq S_\theta $, i.e $ (i,j) \in C_{\theta}$.
\noindent The other claims follow immediately from the previous fact and from the general theory of P\l onka sums.  
\end{proof}

\subsection*{Generated congruences}
\noindent In this subsection, we provide a description of generated congruences in a P\l onka sum $\A=\PLA$. 

We will use the same notation of Definition \ref{def: cong1} extended from congruences to arbitrary (binary) relations, i.e. for instance by $R_{ij}$ we will here mean $R\cap (A_i\times A_j)$, with $R$ a relation on $A$.  


\begin{theorem} \label{th: congruenza generata}
    Let $R \subseteq A \times A$, $C \in Con(I)$ such that $S_R \subseteq C$ and $\Psi_{i}:= Cg^{\A_i} \displaystyle\left (\bigcup_{u,v \le i}(p_{ui} \times p_{vi})(R_{uv}) \right )$. 
    For every $i\in I$, let $\theta_{ii}$ a binary relation on $A_i$ defined as $(a,b) \in \theta_{ii} $ if and only if the following are satisfied: 
    \begin{enumerate}
    \item there exists $ k \in I $ such that $i \le k $ and $ (i,k) \in S_C = \{(i,j)\in C \;|\; (p_{ii\vee j} \times p_{ji\vee j})^{-1}(\Psi_{i\vee j}) \neq \emptyset \}$;
     
    \item $(p_{ik}(a), p_{ik}(b)) \in\Psi_{k}  $. 
    \end{enumerate}
 \noindent   
    Then $\theta_{ii} \in Con(\A_i)$ and $(S_C, \{\theta_{ii}\}_{i\in I})$ is a congruence on the direct system $\mathbb{A}$ whose associated congruence $\theta$ (see Theorem \ref{theorem: congruence5}) is such that $R \subseteq \theta$. In particular, for every $R \subseteq A\times A$ and for every $C \in Con(\I)$ there exists $\theta \in Con(\A)$ such that $C_{\theta}=C$. 
\end{theorem}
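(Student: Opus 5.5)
The plan is to feed Lemma \ref{lemma: congruence5} and Theorem \ref{theorem: congruence5} with suitable data, and to check conditions (i)--(iv) of Definition \ref{def: congsist} for the pair $(C,\{\theta_{ii}\}_{i\in I})$; here $S_C$ is the set attached to $C$ as in Lemma \ref{lemma: congruence5}(ii).

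\textbf{Step 1: compatibility of the $\Psi_i$.} For $i\le k$ I will show $\Psi_i\subseteq(p_{ik}\times p_{ik})^{-1}(\Psi_k)$. The right-hand side is a congruence of $\A_i$, being the preimage of a congruence under a homomorphism. So it suffices to check the generators. If $u,v\le i$ and $(a,b)\in R_{uv}$, then $p_{ik}p_{ui}(a)=p_{uk}(a)$ and $p_{ik}p_{vi}(b)=p_{vk}(b)$, and $(p_{uk}(a),p_{vk}(b))$ is a generator of $\Psi_k$. Hence the family $\{\Psi_i\}$ satisfies the hypothesis of Lemma \ref{lemma: congruence5}. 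By part (ii) of that lemma, $S_C$ is a reflexive, symmetric, upper transitive subsemilattice of $\mathbf{I\times I}$, and by part (iii), $C_{S_C}=C$.

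\textbf{Step 2: $\theta_{ii}$ is a congruence.} The pairs $(i,k)\in S_C$ with $k\ge i$ form a directed set of witnesses. The pair $(i,i)$ lies in $S_C$ since the diagonal lies in the preimage. If $(i,k),(i,k')\in S_C$, then joining gives $(i,k\vee k')\in S_C$. By Step 1, membership witnessed at $k$ persists at $k\vee k'$. Thus $\theta_{ii}$ is a directed union of the congruences $(p_{ik}\times p_{ik})^{-1}(\Psi_k)$, and is therefore a congruence of $\A_i$ containing $\Psi_i$.

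Condition (iii) of Definition \ref{def: congsist} follows along the same lines. If $k$ witnesses $(a,b)\in\theta_{ii}$, then joining $(i,k)$ with $(j,j)$ gives $(i\vee j,k\vee j)\in S_C$. Also $(p_{ik}(a),p_{ik}(b))\in\Psi_k$ pushes forward into $\Psi_{k\vee j}$. Together these show that $k\vee j$ witnesses $(p_{i,i\vee j}(a),p_{i,i\vee j}(b))\in\theta_{i\vee j,i\vee j}$.

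For condition (iv), let $(i,j)\in S_C$ and suppose $k\ge i\vee j$ witnesses that the images lie in $\theta_{i\vee j,i\vee j}$. Joining $(i,j)$ with $(i,i)$ gives $(i,i\vee j)\in S_C$. Upper transitivity applied to $(i,i\vee j),(i\vee j,k)$ then gives $(i,k)\in S_C$. So $k$ is already a witness for $(a,b)\in\theta_{ii}$.

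\textbf{Step 3: the main obstacle.} The set $S_C$ in the statement is computed from the $\Psi$'s, whereas Theorem \ref{theorem: congruence5} uses the set computed from the $\theta_{ii}$'s. One inclusion is immediate from $\Psi_k\subseteq\theta_{kk}$. For the converse, suppose $(i,j)\in C$ and some pair lies in $(p_{i,i\vee j}\times p_{j,i\vee j})^{-1}(\theta_{i\vee j,i\vee j})$ via a witness $k\ge i\vee j$ with $(i\vee j,k)\in S_C$. Then this pair also lies in $(p_{ik}\times p_{jk})^{-1}(\Psi_k)$. I would try to pull this back to $i\vee j$ by composing transition maps and using the witness property. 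Failing that, I would work directly with the $\theta$-version of $S_C$ throughout, which is all that Theorem \ref{theorem: congruence5} needs. With the sets matched, Theorem \ref{theorem: congruence5} yields a congruence $\theta$ on $\A$.

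\textbf{Step 4: $R\subseteq\theta$ and $C_\theta=C$.} Let $(a,b)\in R_{uv}$. Then $(u,v)\in S_R\subseteq C$. The pair $(p_{u,u\vee v}(a),p_{v,u\vee v}(b))$ is a generator of $\Psi_{u\vee v}\subseteq\theta_{u\vee v,u\vee v}$. Hence $(u,v)\in S_C$ and $(a,b)\in\theta_{uv}\subseteq\theta$. Finally, $C_\theta=C_{S_C}=C$ by Lemma \ref{lemma: congruence6} and Lemma \ref{lemma: congruence5}(iii). Taking $R=\emptyset$ (so $S_R=\emptyset\subseteq C$) yields, for every $C\in Con(\I)$, a congruence $\theta$ with $C_\theta=C$.
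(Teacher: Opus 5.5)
Your proof follows the paper's route. The verification of (iii) and (iv) of Definition~\ref{def: congsist} by joining witnesses, and the final check of $R\subseteq\theta$ through the generators of $\Psi_{u\vee v}$, are exactly the paper's arguments. Your Step~1 supplies the compatibility $\Psi_i\subseteq(p_{ik}\times p_{ik})^{-1}(\Psi_k)$, which the paper uses without proof, both to invoke Lemma~\ref{lemma: congruence5} and in its transitivity and compatibility checks. Presenting $\theta_{ii}$ as a directed union of the congruences $(p_{ik}\times p_{ik})^{-1}(\Psi_k)$ neatly replaces the paper's four separate verifications.

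The one loose end is Step~3, which the paper does not address at all. Write $S_C^{\Psi}$ for the set in the statement and $S_C^{\theta}$ for the set Theorem~\ref{theorem: congruence5} computes from $\{\theta_{ii}\}_{i\in I}$.

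Your attempt to pull witnesses back to $i\vee j$ cannot succeed in general when $\mathbf{I}$ lacks a least element, as in the paper's own lattice example. Take lattices indexed by $i,j<m<k$, with $A_i=\{0\}$, $A_j=\{1\}$, $A_m$ the two-element lattice $2<3$, $A_k$ trivial, $p_{im}(0)=3$, $p_{jm}(1)=2$, $C=\nabla_{\mathbf{I}}$ and $R=\emptyset$. Then every $\Psi_x$ is the identity relation, so $(i,j)\notin S_C^{\Psi}$. On the other hand, $\theta_{mm}=\nabla_{A_m}$ (witnessed by $k$), which gives $(i,j)\in S_C^{\theta}$. The union of the $\theta_{xy}$ over $S_C^{\Psi}$ contains $(0,2)$ and $(2,1)$ but not $(0,1)$, so it is not even transitive. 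Hence your fallback is not optional: it is the correct reading.

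To make the fallback rigorous you need (iv) on all of $S_C^{\theta}$, not just on $S_C^{\Psi}$. This does hold, because your argument only used $(i,i\vee j)\in S_C^{\Psi}$, and that is true for every $(i,j)\in C$:
\begin{itemize}
\item $(i,i\vee j)\in C$, since $C$ is a congruence;
\item a comparable pair $(i,l)\in C$ with $i\le l$ always lies in $S_C^{\Psi}$, witnessed by $(a,p_{il}(a))$.
\end{itemize}
So (iv) holds on all of $C$, and Theorem~\ref{theorem: congruence5} applies to $(C,\{\theta_{ii}\}_{i\in I})$. Step~4 then goes through with $S_\theta=S_C^{\theta}$ and $C_\theta=C_{S_C^{\theta}}=C$ by Lemma~\ref{lemma: congruence5}(iii).

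Under the paper's standing convention that $\mathbf{I}$ has a least element $i_0$, the issue disappears altogether. For $(i,j)\in C$ and $c\in A_{i_0}$, the pair $(p_{i_0i}(c),p_{i_0j}(c))$ witnesses $(i,j)\in S_C^{\Psi}$, so $S_C^{\Psi}=S_C^{\theta}=C$.
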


\begin{proof}
Preliminarily observe that $S_{C}$ is well defined (see Lemma \ref{lemma: congruence5}) with respect to the family of congruences $\Psi_{i}$, and Lemma \ref{lemma: congruence5} assures that $S_C$ is a reflexive, symmetric and upper transitive subsemilattice of $\I\times \I$. We now check that $\theta_{ii}$ is a congruence on $\A_{ii}$. \\
\noindent
\emph{Reflexivity}. Let $a \in A_i$, then $ \displaystyle (a,a)=(p_{ii}(a), p_{ii}(a)) \in \Delta_i \subseteq \Psi_i $ 
, so $(a,a) \in \theta_{ii}$. \\
\noindent
\emph{Symmetry}. Let $(a,b) \in \theta_{ii}$, that is there exist $k\in I$ such that $i \le k$, $(i,k) \in S_C$ and $\displaystyle (p_{ik}(a), p_{ik}(b)) \in \Psi_{k} $, therefore $\displaystyle (p_{ik}(b), p_{ik}(a)) \in \Psi_{k}$, so $(b,a) \in \theta_{ii}$.\\
    \noindent 
\emph{Transitivity}. Let $(a,b), (b,c) \in \theta_{ii}$. Then, by definition, there exist $k_1, k_2 \in I$ such that $i \le k_1, k_2, (i,k_1), (i, k_2) \in S_C$, and $\displaystyle (p_{ik_1}(a), p_{ik_1}(b)) \in \Psi_{k_1} $, $(p_{ik_2}(b), p_{ik_2}(c)) \in \Psi_{k_{2}} $.  
From the fact that $ (p_{k_1k_1 \vee k_2} \times p_{k_1k_1 \vee k_2})(\Psi_{k_1}) 
      \subseteq \Psi_{k_{1}\vee k_{2}}$ and $ (p_{k_2 k_1 \vee k_2} \times p_{k_2k_1 \vee k_2})(\Psi_{k_2}) 
      \subseteq \Psi_{k_{1}\vee k_{2}}$ and the transitivity of $\Psi_{k_{1}\vee k_{2}}$ we deduce 
$ ((p_{i,k_1 \vee k_2})(a), (p_{i,k_1 \vee k_2})(c)) \in \Psi_{k_1 \vee k_2} $. Furthermore, we have $(i, k_1 \vee k_2) \in S_{C}$ (since $S_{C}$ is a subsemilattice of $\I\times \I$), hence $(a,c) \in \theta_{ii}$.\\
\noindent 
\emph{Compatibility with the operations}: Let $(a_1,b_1),\ldots,(a_n,b_n) \in \theta_{ii}$. Then, by definition, there exist $k_1,\ldots, k_n \in I$ such that $i \le k_1, \ldots, k_n$, $(i,k_1), \ldots, (i, k_n) \in S_C$ and for every $ j\in \{1,\ldots,n\}$, $ (p_{ik_j}(a_j), p_{ik_j}(b_j)) \in \Psi_{k_j}$. Since we have that for every $ j \in\{1, \ldots, n\}$, $(p_{k_jk_1 \vee \ldots  \vee k_n} \times p_{k_jk_1 \vee \ldots \vee k_n}) \Psi_{k_j}\subseteq \Psi_{k_1 \vee \ldots \vee k_n}$ then $ (p_{ik_1 \vee \ldots k_n}(a_j), p_{ik_1 \vee \ldots k_n}(b_j)) \in \Psi_{{k_1 \vee \ldots \vee k_n}}$. So 
    
    \begin{align*} \displaystyle 
         &(p_{ik_1 \vee \ldots \vee k_n}(f^{\A_i}(a_1,\ldots, a_n)), p_{ik_1 \vee \ldots \vee  k_n}(f^{\A_i}(b_1,\ldots, b_n))) =\\
         &\qquad = (f^{\A_{k_1 \vee \ldots \vee k_n}}(p_{ik_1 \vee \ldots 
 \vee k_n}(a_1),\ldots, p_{ik_1 \vee \ldots k_n}(a_n)),f^{\A_{k_1 \vee \ldots \vee k_n}}(p_{ik_1 \vee \ldots k_n}(b_1),\ldots, p_{ik_1 \vee \ldots k_n}(b_n))) \in \Psi_{k_1 \vee \ldots \vee k_n}.
     \end{align*}
    
    \noindent By definition, since $(i, k_1 \vee \ldots \vee k_n) \in S_C$, we have $(f^{\A_1}(a_1, \ldots, a_n), f^{\A_i}(b_1, \ldots, b_n)) \in \theta_{ii}$.\\

    \noindent Now we prove conditions \textit{(iii)-(iv)} of Definition \ref{def: congsist}. \\
    \noindent 
    (iii) Let $(a,b) \in \theta_{ii}$, then by definition there exists $k\in I$ such that $i \le k$, $(i,k) \in S_C$ and $(p_{ik}(a), p_{ik}(b)) \in \Psi_{{k}}$. Consequently $i \vee j \le k \vee j$, $(i \vee j, k \vee j) \in S_C$, and $(p_{i\vee j, k \vee j}((p_{ii\vee j})(a)), p_{i\vee j,k\vee j}((p_{ii\vee j})(b)))=(p_{ik\vee j}(a), p_{ik\vee j}(b))=(p_{kk\vee j}(p_{ik}(a)),p_{kk\vee j}(p_{ik}(b))) \in (p_{kk\vee j} \times p_{kk\vee j})(\Psi_{{k}})\subseteq \Psi_{k \vee j}$, therefore by definition 
    $(a,b) \in (p_{ii\vee j \times p_{ii\vee j}})^{-1}(\theta_{i\vee j, i \vee j})$.\\
    \noindent 
    (iv) Suppose $(i,j) \in S_C$. $(a,b) \in (p_{ii\vee j} \times p_{ii\vee j})^{-1}(\theta_{i\vee j, i \vee j})$ if and only if $(p_{ii\vee j}(a), p_{ii\vee j}(b)) \in \theta_{i\vee j,i \vee j}$ if and only if there exists $k \in I$ such that $i \vee j \le k$, $(i\vee j, k) \in S_C$ and $(p_{ik}(a),p_{ik}(b))=(p_{i\vee j,k}(p_{ii\vee j}(a)), p_{i\vee j,k}(p_{ii\vee j}(b))) \in \Psi_{{k}}$. That is $(a,b) \in \theta_{ii}$, since $i \le i\vee j \le k$ imply $i \le k$ and $(i, i \vee j), (i\vee j,k) \in S_C$ imply (upper transitivity) $(i,k) \in S_C$.\\
 \noindent Since for every $i \in I$ we have $i \le i, (i,i) \in S_C$ and $(p_{ii} \times p_{ii})(\Psi_{i})=\Psi_{i}$, we deduce $\Psi_{i} \subseteq \theta_{ii}$ for every $i\in I$. Finally, observe that $(i,j) \in S_R$ imply $\emptyset \neq R_{ij} \subseteq (p_{ii\vee j} \times p_{ji\vee j})^{-1}((p_{ii\vee j} \times p_{ji\vee j})(R_{ij})) \subseteq (p_{ii\vee j} \times p_{ji\vee j})^{-1}(Cg^{\A_{i\vee j}}(\bigcup_{u,v \le i \vee j}{(p_{ui\vee j} \times p_{vi\vee j})(R_{uv})}))=(p_{ii\vee j} \times p_{ji\vee j})(\Psi_{i\vee j}) \subseteq (p_{ii\vee j} \times p_{ji\vee j})(\theta_{i\vee j, i \vee j})$, so $(i,j) \in S_C$ and $R_{ij} \subseteq \theta_{ij}=(p_{ii\vee j} \times p_{ji\vee j})^{-1}(\theta_{i\vee j, i \vee j})$ (see Theorem \ref{theorem: congruence5}). Consequently $R \subseteq \theta$.
\end{proof} 


\begin{corollary}
    Let $R \subseteq A \times A$. Then for every $C \in Con(\I)$ with $S_R \subseteq C$ there exists $\theta \in Con(\A)$ such that $R \subseteq \theta$ and $C_{\theta}=C$.
\end{corollary}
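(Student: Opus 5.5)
The plan is to read this off from Theorem \ref{th: congruenza generata} together with Lemma \ref{lemma: congruence5}(iii) and Lemma \ref{lemma: congruence6}. Fix $R\subseteq A\times A$ and $C\in Con(\I)$ with $S_R\subseteq C$. Form the congruences $\Psi_i$ and the relations $\theta_{ii}$ exactly as in Theorem \ref{th: congruenza generata}. That theorem gives a congruence on the system $\mathbb{A}$ with pure fibers $\{\theta_{ii}\}_{i\in I}$. Let $\theta\in Con(\A)$ be the congruence attached to it by Theorem \ref{theorem: congruence5}(2); the same theorem already yields $R\subseteq\theta$. What remains is to show that $C_\theta=C$.

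The first step is to compute $S_\theta$. By Theorem \ref{theorem: congruence5}(2), $\theta$ is the union of the sets $\theta_{ij}=(p_{ii\vee j}\times p_{ji\vee j})^{-1}(\theta_{i\vee j,i\vee j})$ over the pairs $(i,j)$ in the relevant upper transitive subsemilattice $S$. Consequently $\theta\cap(A_i\times A_j)$ is nonempty exactly when $(i,j)\in S$ and this preimage is nonempty. I will check that every $(i,j)\in S$ really gives a nonempty $\theta_{ij}$, so that $S_\theta=S$.

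The point needing care is which family defines $S$: the theorem defines $S_C$ through the $\Psi_i$, whereas Definition \ref{def: congsist} and Lemma \ref{lemma: congruence5}(ii) use the $\theta_{ii}$. Since $\Psi_i\subseteq\theta_{ii}$, the $\Psi$-version of $S_C$ is contained in the $\theta$-version. To pass to the other inclusion, I will use that a pair lies in either version iff it lies in $C$ and the corresponding preimage in the top fiber is nonempty, and compare the two preimages through the definition of $\theta_{i\vee j,i\vee j}$ by pushing forward to a larger index $k$. If this comparison turns out to be delicate, I will sidestep it: I will simply take $S$ to be whichever version Theorem \ref{theorem: congruence5} actually uses, since both are reflexive, symmetric, upper transitive subsemilattices of $\I\times\I$ contained in $C$.

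The second step is the conclusion. By Lemma \ref{lemma: congruence6}, $C_\theta=C_{S_\theta}=C_{S}$, where $S=S_C$ is computed from a family of fiber congruences satisfying condition (iii) of Definition \ref{def: congsist}. The first part of Lemma \ref{lemma: congruence5}(iii) states that $C_{S_C}=C$ for any such family. The reason is that $(i,i\vee j)$ and $(j,i\vee j)$ always have nonempty preimages, witnessed by the pairs $(a,p_{ii\vee j}(a))$, and $C$ is transitive. This gives $C_\theta=C$.

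The expected main obstacle is the identification $S_\theta=S_C$, that is, the bookkeeping between the $\Psi$- and $\theta$-versions of $S_C$. The rest is a direct citation of the earlier results.
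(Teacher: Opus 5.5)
Your proposal is correct and is essentially the paper's argument. The paper also combines Theorem \ref{th: congruenza generata}, which gives a $\theta$ with $R\subseteq\theta$, with the first part of Lemma \ref{lemma: congruence5}(iii), namely $C_{S_C}=C$; this holds for any admissible family of fiber congruences, because comparable pairs of $C$ always have nonempty preimage. Your fallback of using the $\theta_{ii}$-based $S_C$ is the right choice. The reverse inclusion you hoped to prove between the $\Psi$- and $\theta_{ii}$-versions of $S_C$ can fail when $\I$ has no least element, so you should not rely on it. When $i_0$ exists, both versions equal $C$, since the images of $p_{i,i\vee j}$ and $p_{j,i\vee j}$ meet in the image of $p_{i_0,i\vee j}$.
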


\begin{proof}
    This follows immediately from Theorem \ref{th: congruenza generata} using also item (iii) of Lemma \ref{lemma: congruence5}.
\end{proof}
\begin{corollary}\label{cor: gen}
    Let $R \subseteq A \times A$ and $\Psi_{i}:= Cg^{\A_i} \displaystyle\left (\bigcup_{u,v \le i}(p_{ui} \times p_{vi})(R_{uv}) \right )$. Then $S_{Cg^{\A}(R)}=S_{Cg^{\I}(S_R)}$ and $(a,b) \in (Cg^{\A}(R))_{ii}$ if an only if the following are satisfied:
        \begin{enumerate}
    \item there exists $ k \in I $ such that $i \le k $ and $ (i,k) \in S_C = \{(i,j)\in C \;|\; (p_{ii\vee j} \times p_{ji\vee j})^{-1}(\Psi_{i\vee j}) \neq \emptyset \}$;
     
    \item $(p_{ik}(a), p_{ik}(b)) \in\Psi_{k}  $. 
    \end{enumerate} 
\end{corollary}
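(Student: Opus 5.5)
The plan is to apply Theorem \ref{th: congruenza generata} with $C:=Cg^{\I}(S_R)$, which is legitimate because $S_R\subseteq C$. Write $\theta^R$ for the congruence it produces, built from the pair $(S_C,\{\theta_{ii}\}_{i\in I})$ via Theorem \ref{theorem: congruence5}. The theorem gives $R\subseteq\theta^R$, and hence $Cg^{\A}(R)\subseteq\theta^R$. By Corollary \ref{cor: corollary1} it then suffices to prove the reverse inclusion, and for that it is enough to show two things:
\begin{itemize}
\item[(a)] $S_C\subseteq S_{Cg^{\A}(R)}$;
\item[(b)] $\theta_{ii}\subseteq (Cg^{\A}(R))_{ii}$ for every $i\in I$.
\end{itemize}
Once equality holds, the description of $(Cg^{\A}(R))_{ii}$ is exactly the definition of $\theta_{ii}$ in Theorem \ref{th: congruenza generata}. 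The identity $S_{Cg^{\A}(R)}=S_C$ also follows, since $S_{\theta^R}=S_C$ by Theorem \ref{theorem: congruence5} together with Lemma \ref{lemma: congruence5}(iii). This is how I read the stated $S_{Cg^{\I}(S_R)}$.

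Set $\phi:=Cg^{\A}(R)$. The first step is to show $\Psi_k\subseteq\phi_{kk}$ for every $k$. Take $(a,b)\in R_{uv}$ with $u,v\le k$ and any $c\in A_k$. Then $(p_{uk}(a),p_{vk}(b))=(a\cdot c,\,b\cdot c)\in\phi$ by compatibility of $\phi$ with the partition term. So the generators of $\Psi_k$ lie in $\phi_{kk}$, which is a congruence on $\A_k$, and the inclusion follows.

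Next, $S_R\subseteq S_\phi$ gives $C=Cg^{\I}(S_R)\subseteq Cg^{\I}(S_\phi)=C_\phi$ by Lemma \ref{lemma: congruence6}. For (a), take $(i,j)\in S_C$. Then $(i,j)\in C\subseteq C_\phi$, and there are $a\in A_i$, $b\in A_j$ with $(p_{i,i\vee j}(a),p_{j,i\vee j}(b))\in\Psi_{i\vee j}\subseteq\phi_{i\vee j,i\vee j}$. Lemma \ref{lemma: congruence6}(ii) then yields $(i,j)\in S_\phi$.

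For (b), take $(a,b)\in\theta_{ii}$, witnessed by some $k\ge i$ with $(i,k)\in S_C\subseteq S_\phi$ and $(p_{ik}(a),p_{ik}(b))\in\Psi_k\subseteq\phi_{kk}$. Lemma \ref{lemma: congruence4}(ii), applied to $(i,k)\in S_\phi$ with $i\vee k=k$, gives $(a,b)\in\phi_{ii}$.

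The only delicate step is (a). There one has to be careful that the nonemptiness condition defining $S_C$ is taken with respect to $\Psi$ and not with respect to $\phi$. The inclusion $\Psi\subseteq\phi$ handles exactly this, so I expect no real obstacle.
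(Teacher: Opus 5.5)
Your proposal is correct and follows essentially the paper's argument. The paper shows that the congruence produced by Theorem \ref{th: congruenza generata} with $C=Cg^{\I}(S_R)$ lies below every $\theta\in Con(\A)$ containing $R$, by checking $S_C\subseteq S_\theta$ and the pure-fiber inclusions via Lemma \ref{lemma: congruence1} and transitivity; your double inclusion is that argument with $\theta=Cg^{\A}(R)$, and you are more explicit than the paper in justifying $\Psi_k\subseteq\phi_{kk}$ through the partition term and in keeping the $\Psi$-based $S_C$ separate from the one computed with respect to $\phi$.
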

\begin{proof}
    Let $\theta \in Con(\A)$ such that $R \subseteq \theta$, than $S_R \subseteq S_{\theta} \subseteq C_{\theta}$, so $Cg^{\mathbf{I}}(S_R) \subseteq C_{\theta}$, therefore $S_{Cg^{\mathbf{I}}(S_R)} \subseteq S_{C_\theta}=S_{\theta}$. Moreover, for every $i\in I$ let $\eta_{ii} \subseteq A_i \times A_i$ be the congruence on $\A_i$ defined in Theorem \ref{th: congruenza generata}, than for every $i\in I$ and for every $(a,b) \in A_i \times A_i$ we have $(a,b) \in \eta_{ii}$ if and only if there exists $k \in I$ with $i \le k,\ (i,k) \in S_{Cg^{\mathbf{I}}(R)} \subseteq S_{\theta}$ and such that $(p_{ik}(a), p_{jk}(b) \in \Psi_{{k}} \subseteq \theta_{kk}$, so $(p_{ik}(a),p_{ik}(b)) \in \theta$. But also $(i,k) \in S_{\theta}$, then $(a, p_{ik}(a)), (b, p_{ik}(b))\in \theta$, and so $(a,b) \in \theta_{ii}$ by transitivity of $\theta$.
\end{proof}

\subsection*{Factor congruences}

\noindent Recall that for an algebra $\A$ a \emph{pair of factor congruences} is a pair $(\theta, \theta^{\ast}) \in Con(\A) \times Con(\A)$ such that $\theta \cap \theta^{\ast}=\Delta_{\A},\ \theta \vee \theta^{\ast}=\nabla_{\A}$ and $\theta, \theta^{\ast}$ are permutable, i.e. $\theta \circ \theta^{\ast}=\theta^{\ast} \circ \theta$ (see \cite[Definition 7.4]{BuSa00}). In this subsection, we characterize the factor congruences of a P\l onka sum. 

Recall that if $\theta_1$ and $\theta_2$ are congruences on an algebra $\A$, their relational product $\theta_1 \circ \theta_2$ is a congruence on $\A$ if and only if $\theta_{1}$, $\theta_{2}$ are permutable (in such a case $\theta_{1}\circ\theta_{2} = \theta_{1}\vee\theta_{2}$). It is natural to wonder what is the structure of the congruence $\theta_1 \circ \theta_2$, when $\A$ is a P\l onka sum over a semilattice direct system of algebras in a strongly irregular variety $\V$  

\begin{lemma}\label{lemma: int}
    Let $\A$ be a P\l onka sum over a semilattice direct system $\mathbb{A}$ of algebras in a strongly irregular variety $\V$ and $\theta_1, \theta_2 \in Con(\A)$. Then: 
    \begin{enumerate}
        \item $C_{\theta_1\cap\ \theta_2}=C_{\theta_1} \cap C_{\theta_2}$;
        \item $(\theta_1 \cap \theta_2)_{ii}=(\theta_1)_{ii} \cap (\theta_2)_{ii}$ for every $i\in I$.
    \end{enumerate}
\end{lemma}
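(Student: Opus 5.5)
Item (2) is immediate from the definitions in Definition \ref{def: cong1}. Indeed $(\theta_1\cap\theta_2)_{ii}=(\theta_1\cap\theta_2)\cap(A_i\times A_i)$, which equals $(\theta_1\cap(A_i\times A_i))\cap(\theta_2\cap(A_i\times A_i))=(\theta_1)_{ii}\cap(\theta_2)_{ii}$. No structural information about P\l onka sums is needed here.

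For item (1) we compare the two relations on $I$ via the definition $C_\theta=\{(i,j)\mid (i,i\vee j),(j,i\vee j)\in S_\theta\}$. It therefore suffices to show that, for comparable pairs $(i,k)$ with $i\le k$, we have $(i,k)\in S_{\theta_1\cap\theta_2}$ if and only if $(i,k)\in S_{\theta_1}\cap S_{\theta_2}$. Once this is established, $(i,j)\in C_{\theta_1\cap\theta_2}$ iff both $(i,i\vee j)$ and $(j,i\vee j)$ lie in $S_{\theta_1}\cap S_{\theta_2}$. That condition is exactly $(i,j)\in C_{\theta_1}\cap C_{\theta_2}$.

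The inclusion $S_{\theta_1\cap\theta_2}\subseteq S_{\theta_1}\cap S_{\theta_2}$ is trivial, since a witness pair in $\theta_1\cap\theta_2$ lies in each $\theta_\ell$. The converse is the key step, and it is where strong irregularity enters through Lemma \ref{lemma: congruence1}. Suppose $i\le k$ and $(i,k)\in S_{\theta_1}\cap S_{\theta_2}$. Pick any $a\in A_i$. Lemma \ref{lemma: congruence1} applied to $\theta_1$ with the pair $(i,k)$ gives $(a,p_{ii\vee k}(a))=(a,p_{ik}(a))\in\theta_1$. Applied to $\theta_2$, it gives $(a,p_{ik}(a))\in\theta_2$. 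Hence the same pair $(a,p_{ik}(a))$, with $a\in A_i$ and $p_{ik}(a)\in A_k$, witnesses $(i,k)\in S_{\theta_1\cap\theta_2}$. The point is that for comparable indices the witness can be chosen canonically, independently of the congruence. For incomparable $(i,j)$ no such common witness need exist, which is why the statement is about $C$ rather than $S$.

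I expect no real obstacle. The only care needed is to note that the definition of $C_\theta$ refers to $S_\theta$ only at comparable pairs, so the canonical-witness argument covers everything required.
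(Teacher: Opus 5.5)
Your proof is correct and is essentially the paper's argument: item (2) is the same set-theoretic computation, and for item (1) the paper likewise notes the trivial inclusion $S_{\theta_1\cap\theta_2}\subseteq S_{\theta_1}\cap S_{\theta_2}$. For the converse it uses Lemma \ref{lemma: congruence1} to place the witnesses $(a,p_{i,i\vee j}(a))$ and $(b,p_{j,i\vee j}(b))$ in $\theta_1\cap\theta_2$, so your reduction to comparable pairs is only a repackaging of that step.
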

\begin{proof}
(1) It is routine to verify that $S_{\theta_1 \cap\ \theta_2} \subseteq S_{\theta_1} \cap S_{\theta_2}$. Let $(i,j) \in C_{\theta_1 \cap\ \theta_2}$, then $\{(i,i \vee j), (j, i \vee j)\} \subseteq S_{\theta_1 \cap\ \theta_2} \subseteq S_{\theta_1} \cap S_{\theta_2}$  
, so $(i,j) \in C_{\theta_1} \cap C_{\theta_2}$. For the converse, let $(i,j) \in C_{\theta_1} \cap C_{\theta_2}$, then $\{(i, i \vee j), (j, i \vee j)\} \subseteq S_{\theta_1} \cap S_{\theta_2}$. Let $a \in A_i,\ b \in A_j$, then $\{(a, p_{ii\vee j}(a)), (b, p_{ji\vee j}(b))\} \subseteq \theta_1 \cap \theta_2$, therefore $\{(i,i\vee j), (j, i \vee j)\} \subseteq S_{\theta_1 \cap\ \theta_2}$, that is $(i,j) \in C_{\theta_1 \cap\ \theta_2}$.\\
\noindent
(2) $(\theta_1 \cap \theta_2)_{ii}=(\theta_1 \cap \theta_2) \cap (A_i \times A_i)=(\theta_1 \cap (A_i \times A_i)) \cap (\theta_2 \cap (A_i \times A_i))=(\theta_1)_{ii} \cap (\theta_2)_{ii}$ for every $i\in I$.
\end{proof}


\begin{lemma}\label{lemma: Fattore1}
    Let $\A$ be a P\l onka sum over a semilattice direct system $\mathbb{A}$ of algebras in $\V$ and $\theta_1, \theta_2 \in Con(\A)$ two permutable congruences. Then $S_{\theta_{1} \circ \theta_{2}} \subseteq S_{\theta_1} \circ S_{\theta_2}$, $C_{\theta_1} \circ C_{\theta_2} \subseteq C_{\theta_1 \circ \theta_2},$ and, for every $i\in I$
     $$\displaystyle (\theta_1 \circ \theta_2)_{ii}= \bigcup_{(i,j) \in S_{\theta_1} \cap S_{\theta_2}}{((\theta_1)_{ij} \circ (\theta_2)_{ji})}.$$     
\end{lemma}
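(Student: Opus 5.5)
We prove the three assertions separately, working throughout with the description of congruences from Definition \ref{def: cong1}, Lemma \ref{lemma: congruence1} and Lemma \ref{lemma: congruence6}. Permutability gives $\theta_1\circ\theta_2=\theta_1\vee\theta_2\in Con(\A)$, so $S_{\theta_1\circ\theta_2}$, $C_{\theta_1\circ\theta_2}$ and $(\theta_1\circ\theta_2)_{ii}$ are defined.

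For $S_{\theta_1\circ\theta_2}\subseteq S_{\theta_1}\circ S_{\theta_2}$, take $(i,j)\in S_{\theta_1\circ\theta_2}$. Then there are $a\in A_i$ and $b\in A_j$ with $(a,b)\in\theta_1\circ\theta_2$, so some $c\in A_k$ satisfies $(a,c)\in\theta_1$ and $(c,b)\in\theta_2$. Hence $(i,k)\in S_{\theta_1}$ and $(k,j)\in S_{\theta_2}$, which gives $(i,j)\in S_{\theta_1}\circ S_{\theta_2}$.

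For $C_{\theta_1}\circ C_{\theta_2}\subseteq C_{\theta_1\circ\theta_2}$, note that $\theta_1,\theta_2\subseteq\theta_1\circ\theta_2$ by reflexivity. Thus $S_{\theta_m}\subseteq S_{\theta_1\circ\theta_2}$, and by the definition of $C$ we get $C_{\theta_m}\subseteq C_{\theta_1\circ\theta_2}$ for $m=1,2$. Since $C_{\theta_1\circ\theta_2}\in Con(\I)$ by Lemma \ref{lemma: congruence6}(i), it is transitive and therefore contains $C_{\theta_1}\circ C_{\theta_2}$.

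For the fiber formula, the inclusion $\supseteq$ is immediate. If $(i,j)\in S_{\theta_1}\cap S_{\theta_2}$, $(a,c)\in(\theta_1)_{ij}$ and $(c,b)\in(\theta_2)_{ji}$, then $a,b\in A_i$ and $(a,b)\in\theta_1\circ\theta_2$. For $\subseteq$, let $(a,b)\in(\theta_1\circ\theta_2)_{ii}$ with witness $c\in A_j$, so $(a,c)\in\theta_1$ and $(c,b)\in\theta_2$. Then $(i,j)\in S_{\theta_1}$, and $(j,i)\in S_{\theta_2}$; by the symmetry in Lemma \ref{lemma: congruence1}, also $(i,j)\in S_{\theta_2}$. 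So $(a,b)\in(\theta_1)_{ij}\circ(\theta_2)_{ji}$ with $(i,j)\in S_{\theta_1}\cap S_{\theta_2}$. One could try to push the witness into $A_{i\vee j}$ using $(c,p_{j,i\vee j}(c))$, but this is unnecessary, since the index set is all of $S_{\theta_1}\cap S_{\theta_2}$.

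No step here is a genuine obstacle; the only point needing care is checking the symmetry of $S_{\theta_2}$ in the last inclusion.
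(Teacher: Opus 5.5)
Your proof is correct. The first inclusion and the fiber formula are argued as in the paper; you also check explicitly, via symmetry of $S_{\theta_2}$, that the index $(i,j)$ lies in $S_{\theta_1}\cap S_{\theta_2}$, which the paper leaves implicit.

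The middle inclusion $C_{\theta_1}\circ C_{\theta_2}\subseteq C_{\theta_1\circ\theta_2}$ is where you take a different route.

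The paper argues elementwise. From $(i,j)\in C_{\theta_1}$ and $(j,k)\in C_{\theta_2}$ it takes $a\in A_i$ and forms the chain $a\mathrel{\theta_1}p_{i,i\vee j}(a)\mathrel{\theta_2}p_{i,i\vee j\vee k}(a)$, which gives $(i,i\vee j\vee k)\in S_{\theta_1\circ\theta_2}$. It gets $(k,i\vee j\vee k)\in S_{\theta_1\circ\theta_2}$ symmetrically, using permutability. Upper transitivity (Lemma \ref{lemma: congruence1}) then yields $(i,i\vee k),(k,i\vee k)\in S_{\theta_1\circ\theta_2}$, hence $(i,k)\in C_{\theta_1\circ\theta_2}$.

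You instead note that $\theta\mapsto S_\theta$, and hence $\theta\mapsto C_\theta$, is monotone. This gives $C_{\theta_1},C_{\theta_2}\subseteq C_{\theta_1\circ\theta_2}$, and you close under composition because $C_{\theta_1\circ\theta_2}$ is a congruence of $\I$ by Lemma \ref{lemma: congruence6}(i).

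Your version is shorter and more conceptual. It proves the more general inclusion $C_{\theta_1}\vee C_{\theta_2}\subseteq C_{\theta_1\vee\theta_2}$ for arbitrary congruences. Permutability enters only to identify $\theta_1\vee\theta_2$ with $\theta_1\circ\theta_2$, so that Lemma \ref{lemma: congruence6}(i) applies to it.

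The paper's version exhibits explicit witnesses through the transition maps, showing concretely how the composite congruence links the fibers. It rests on the same underlying fact, since upper transitivity also requires $\theta_1\circ\theta_2$ to be a congruence.
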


\begin{proof}
Suppose $(i,k) \in S_{\theta_1 \circ \theta_2}$. Then by definition there exists $(a,c) \in A_i \times A_k$ such that $(a,c) \in \theta_1 \circ \theta_2$, i.e. there exist $j \in I$ and $b \in A_j$, for some $j\in I$, such that $(a,b) \in \theta_1$ and $(b,c) \in \theta_2$. Therefore, $(i,j) \in S_{\theta_1}$ and $(j,k) \in S_{\theta_2}$, which implies $(i,k) \in S_{\theta_1 \circ \theta_2}$.\\
\noindent 
Suppose $(i,k) \in C_{\theta_1} \circ C_{\theta_2}$. Then there exists $j \in I$ such that $(i,j) \in C_{\theta_1}$ and $(j,k) \in C_{\theta_2}$, that is $\{(i, i \vee j), (j, i \vee j)\} \subseteq S_{\theta_1}$ and $\{(j,j \vee k), (k,j\vee k)\} \subseteq S_{\theta_2}$. So from preservation of join we also have $(i \vee j, i \vee j \vee k) \in S_{\theta_2}$. Moreover, for every $ a \in A_i $, $ (a, p_{ii\vee j}(a)) \in \theta_1$ and $(p_{ii\vee j}(a), p_{ii\vee j \vee k}{(a))} \in \theta_2$, thus $(a, p_{ii\vee j \vee k}(a)) \in \theta_1 \circ \theta_2 $ which implies $ (i, i \vee j \vee k) \in S_{\theta_1 \circ \theta_2}$. By symmetry and permutability of $\theta_1$ and $\theta_2$ we also get $(k, i \vee j \vee k) \in S_{\theta_1 \circ \theta_2}$. Consequently, by upper transitivity of $S_{\theta_1 \circ \theta_2}$, we have $\{(i, i \vee k), (k,i \vee k)\} \subseteq S_{\theta_1 \circ \theta_2} $, proving that $ (i,k) \in C_{\theta_1 \circ \theta_2}$.

    \noindent Finally, suppose $(a,c) \in (\theta_1 \circ \theta_2)_{ii}$. Then, by definition, $(a,c) \in (A_i \times A_i) \cap (\theta_1 \circ \theta_2)$. This means $(a,c) \in A_i \times A_i$ and there exists $b \in A_j$ such that $(a,b) \in \theta_1$ and $(b,c) \in \theta_2$. From this, we deduce that there exists $j \in I$ such that $(a,c) \in (\theta_1)_{ij} \circ (\theta_2)_{ji}$. Conversely, suppose for some $j \in I$, we have $(a,c) \in (\theta_1)_{ij} \circ (\theta_2)_{ji}$. Then, by definition, there exists $b \in A_j$ such that $(a,b) \in (\theta_1)_{ij} \subseteq \theta_1$ and $(b,c) \in (\theta_2)_{ji} \subseteq \theta_2$. Therefore, $(a,c) \in \theta_1 \circ \theta_2$, and thus $(a,c) \in (\theta_1 \circ \theta_2)_{ii}$. 
\end{proof}

\noindent From the preceding, we deduce the following characterization for \emph{factor congruences}.

\begin{theorem}\label{th: congruenze fattore}
    Let $\A$ be a P\l onka sum over a semilattice direct system $\mathbb{A}$ of algebras in $\V$ and $\theta_1, \theta_2 \in Con(A)$. Then the following are equivalent: 
    \begin{enumerate}
        \item $(\theta_1, \theta_2)$ is a pair of factor congruences on $\A$; 
        \item $\theta_{1}, \theta_{2}$ are permutable, $(C_{\theta_1}, C_{\theta_2})$ is a pair of factor congruences on $\mathbf{I}$ and for every $ i\in I $ $ ((\theta_1)_{ii}, (\theta_2)_{ii})$ is a pair of factor congruences on $\A_i$.
    \end{enumerate}
    \end{theorem}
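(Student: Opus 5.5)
The plan is to prove both implications by working through the correspondence of Theorem \ref{theorem: congruence5}. Under that correspondence a congruence is determined by its semilattice part $C_\theta$ together with its pure fibers $\theta_{ii}$ (Corollary \ref{cor: corollary1}). Meets are computed componentwise by Lemma \ref{lemma: int}, and permutable joins are controlled by Lemma \ref{lemma: Fattore1}.

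\emph{(1)$\Rightarrow$(2).} Permutability is part of the definition of factor congruences.

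For the semilattice part, Lemma \ref{lemma: int} gives $C_{\theta_1}\cap C_{\theta_2}=C_{\theta_1\cap\theta_2}=C_{\Delta_\A}=\Delta_{\I}$. Lemma \ref{lemma: Fattore1} gives $C_{\theta_1}\circ C_{\theta_2}\subseteq C_{\theta_1\circ\theta_2}=C_{\nabla_\A}=\nabla_{\I}$. This inclusion is not enough: I must show that $C_{\theta_1}\circ C_{\theta_2}$ is all of $I\times I$. Take $i,k\in I$ and $a\in A_i$, $c\in A_k$. Since $(a,c)\in\nabla=\theta_1\circ\theta_2$, there is some $b\in A_j$ with $(a,b)\in\theta_1$ and $(b,c)\in\theta_2$. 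Hence $(i,j)\in S_{\theta_1}\subseteq C_{\theta_1}$ and $(j,k)\in S_{\theta_2}\subseteq C_{\theta_2}$. The same argument with the roles swapped, together with the fact that $C_{\theta_1},C_{\theta_2}$ are congruences, gives permutability on $\I$.

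For the fibers, $(\theta_1)_{ii}\cap(\theta_2)_{ii}=\Delta_{A_i}$ by Lemma \ref{lemma: int}(2). Now fix $a,c\in A_i$ and write $(a,c)\in\theta_1\circ\theta_2$ through a witness $b\in A_j$. This is the main obstacle: $b$ need not lie in $A_i$, so I plan to push it into $A_i$. Since $(i,j)\in S_{\theta_1}$, Lemma \ref{lemma: congruence1} gives $(a,p_{i,i\vee j}(a))\in\theta_1$, and $j\le i$ should follow from $C_{\theta_1}\cap C_{\theta_2}=\Delta$. If that fails, I will use instead $b' = a\cdot b\cdot\ldots$. Concretely, set $b'=b\cdot a\in A_{i\vee j}$. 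Applying $x\cdot$ compatibility, $(a\cdot a, b\cdot a)\in\theta_1$ and $(b\cdot a,c\cdot a)\in\theta_2$ keep us in $A_{i\vee j}$. To return to $A_i$, I argue that $(i,j)\in C_{\theta_1}\cap C_{\theta_2}=\Delta_{\I}$. Indeed, $(i,j)\in S_{\theta_1}$ and $(j,i)\in S_{\theta_2}$, so $(i,i\vee j)\in S_{\theta_1}$ and $(i,i\vee j)\in S_{\theta_2}$. Lemma \ref{lemma: congruence1} then gives $(a,a\cdot b)$ in both $\theta_1$ and $\theta_2$, hence in $\Delta$. So $a=p_{i,i\vee j}(a)$, which forces $i\vee j=i$ by disjointness of fibers. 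Symmetrically $j\le i$ yields $j=i$, so $b\in A_i$. This decomposes $(a,c)$ through $A_i$, and by symmetry the two fiber congruences permute. Permutability plus full join then gives factor congruences on $\A_i$.

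\emph{(2)$\Rightarrow$(1).} Here I show $\theta_1\cap\theta_2=\Delta$ and $\theta_1\circ\theta_2=\nabla$; permutability is assumed. The meet is immediate from Lemma \ref{lemma: int} and Corollary \ref{cor: corollary1}, since $\Delta_\A$ corresponds to $(\Delta_{\I},\{\Delta_{A_i}\})$.

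For the join, take $a\in A_i$ and $c\in A_k$, and pick $j$ with $(i,j)\in C_{\theta_1}$ and $(j,k)\in C_{\theta_2}$. Since $C_{\theta_1}\cap C_{\theta_2}=\Delta_{\I}$ and the two permute, I expect $S_{\theta_1}=C_{\theta_1}$ in this situation; I would verify this via Lemma \ref{lemma: congruence6}(ii) using $m=i\vee j$. Then $(a,p_{i,m}(a))\in\theta_1$. Moving along $C_{\theta_2}$ from $m$ to $m\vee k$ gives $(p_{im}(a),p_{i,m\vee k}(a))\in\theta_2$, and similarly from the side of $c$. Everything is thereby reduced to elements of the single fiber $A_{m\vee k}$, where $(\theta_1)_{ii}\vee(\theta_2)_{ii}=\nabla$ connects them. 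Since $\theta_1,\theta_2$ permute, any chain collapses to one composition, so $\theta_1\circ\theta_2=\nabla_\A$. The delicate point is exactly the membership $(i,i\vee j)\in S_{\theta}$ needed to invoke Lemma \ref{lemma: congruence1}, and that is where I expect most of the work.
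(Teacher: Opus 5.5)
Your proposal is correct and follows essentially the paper's route: meets are read off componentwise through the correspondence of Theorem~\ref{theorem: congruence5}, in (1)$\Rightarrow$(2) a witness $b$ is forced into $A_i$ because $S_{\theta_1}\cap S_{\theta_2}\subseteq C_{\theta_1}\cap C_{\theta_2}=\Delta_{\mathbf{I}}$, and in (2)$\Rightarrow$(1) your chain through $A_{i\vee j\vee k}$ is exactly the argument behind $C_{\theta_1}\circ C_{\theta_2}\subseteq C_{\theta_1\circ\theta_2}$ in Lemma~\ref{lemma: Fattore1}, combined with the fiberwise factor pairs. The step you flag as delicate is in fact immediate: $(i,j)\in C_{\theta_1}$ means by definition that $(i,i\vee j),(j,i\vee j)\in S_{\theta_1}$, and the remaining memberships (e.g.\ $(i\vee j,i\vee j\vee k)\in S_{\theta_2}$) follow by joining with diagonal pairs because $S_{\theta}$ is a subsemilattice of $\mathbf{I}\times\mathbf{I}$; so drop the unverified and unneeded claim $S_{\theta_1}=C_{\theta_1}$, as well as the $b'=b\cdot a$ detour.
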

\begin{proof}
(1) $(\Rightarrow) $ (2). Suppose $(\theta_1, \theta_2)$ is a pair of factor congruences on $\A$. Then, in particular, $\theta_1 \cap \theta_2 = \Delta_A$, from which we deduce by Lemma \ref{lemma: int}-(1) that 
$C_{\theta_1} \cap C_{\theta_2} = 
C_{\theta_1 \cap \theta_2} = C_{\Delta_{\A}}=\Delta_{\mathbf{I}}$. Furthermore, $\nabla_{\mathbf{I}} = S_{\nabla_A}= S_{\theta_1 \vee \theta_2} = S_{\theta_1 \circ \theta_2} \subseteq  S_{\theta_1} \circ S_{\theta_2} \subseteq C_{\theta_1} \circ C_{\theta_2} $, by Lemma \ref{lemma: Fattore1} (the last inclusion is easily verified using the definitions), therefore $ C_{\theta_1} \circ C_{\theta_2}=\nabla_{\mathbf{I}}$. This shows that $(C_{\theta_1}, C_{\theta_2})$ is a pair of factor congruences on $\mathbf{I}$. \\

\noindent By Lemma \ref{lemma: int}-(2), for every $ i\in I $, $ (\theta_1)_{ii} \cap (\theta_2)_{ii}=(\theta_1 \cap \theta_2)_{ii}=(\Delta_{\mathbf{A}})_{ii}=\Delta_{\A_i}$. Moreover, observe that $S_{\theta_1} \cap S_{\theta_2} \subseteq C_{\theta_1} \cap C_{\theta_2} = C_{\theta_1 \cap \theta_2} = \Delta_{\I}$, that is $S_{\theta_1} \cap S_{\theta_2} = \Delta_{\I}$. Therefore, by Lemma \ref{lemma: Fattore1}, $(\theta_1)_{ii} \circ (\theta_2)_{ii}=(\theta_1 \circ \theta_2)_{ii} = (\theta_2 \circ \theta_1)_{ii}=(\theta_2)_{ii} \circ (\theta_{1})_{ii}$, for every $ i \in I $,  therefore $((\theta_1)_{ii},(\theta_2)_{ii})$ is a pair of permutable congruences. Consequently, for every $ i\in I: (\theta_1)_{ii} \vee (\theta_2)_{ii}=(\theta_1)_{ii} \circ (\theta_2)_{ii}=(\theta_1 \circ \theta_2)_{ii}=(\nabla_{\A})_{ii} = \nabla_{\A_i}$. This shows that for all $i \in I$, $((\theta_1)_{ii}, (\theta_2)_{ii})$ is a pair of factor congruences on $\mathbf{A}_i$.

    \noindent (2)$\Rightarrow$(1). Suppose that $\theta_{1}$ and $\theta_{2}$ are permutable and for every $i\in I$, $(C_{\theta_1}, C_{\theta_2})$ and $((\theta_1)_{ii}, (\theta_2)_{ii})$ are pairs of factor congruences on $\I$ and $\A_i$, respectively. 
    Since $S_{\theta_1} \cap S_{\theta_2} \subseteq C_{\theta_1} \cap C_{\theta_2} = C_{\theta_1 \cap \theta_2} = \Delta_{\I}$ then 
     $S_{\theta_1} \cap S_{\theta_2}=\Delta_{\mathbf{I}}$, and, by Lemma \ref{lemma: int}, $ (\theta_1 \cap \theta_2)_{ii}=(\theta_1)_{ii} \cap (\theta_2)_{ii}=\Delta_{\mathbf{A}_i}$, for every $ i\in I$. Therefore, by Theorem \ref{theorem: congruence5}, we deduce $\theta_1 \cap \theta_2 = \Delta_{\mathbf{A}}$. From the fact that $C_{\theta_1} $ and $ C_{\theta_2} $ are factor congruences (and Lemma \ref{lemma: Fattore1}), we have that $\nabla_{\mathbf{I}} = C_{\theta_1} \vee C_{\theta_2} = C_{\theta_1} \circ C_{\theta_2} \subseteq C_{\theta_1 \circ \theta_2} $, therefore $ C_{\theta_1 \circ \theta_2} = \nabla_{\I}$ and $\forall i\in I: (\theta_1 \vee \theta_2)_{ii} = (\theta_1 \circ \theta_2)_{ii}=(\theta_{1})_{ii} \circ (\theta_2)_{ii} = \mathbf{A}_i \times \mathbf{A}_i$, where we once again used Lemma \ref{lemma: Fattore1} considering the fact that $S_{\theta_1} \cap S_{\theta_2}=\Delta_{\mathbf{I}}$, thus $\forall (i,j) \in I \times I: (p_{ii\vee j} \times p_{ji\vee j})^{-1}((\theta_1 \circ \theta_2)_{i\vee j, i \vee j})=(p_{ii\vee j} \times p_{ji\vee j})^{-1}(A_{i \vee j} \times A_{i \vee j})=A_i \times A_j \neq \emptyset$, so $S_{\theta_1 \circ \theta_2}=C_{\theta_1 \circ \theta_2}=\nabla_{\mathbf{I}}$, and by the characterization of congruences in Theorem \ref{theorem: congruence5} we have $\theta_1 \vee \theta_2 = \nabla_{\A}$. This shows that $(\theta_1, \theta_2)$ is a pair of factor congruences on $\A$.
\end{proof}


\begin{remark}
Observe that the requiring $\theta_{1}$, $\theta_2$ to be permutable (congruences of the P\l onka sum $\A$) is a necessary assumption in (2) of Theorem \ref{th: congruenze fattore}, i.e. it is possible to find pairs $C_{\theta_{1}}, C_{\theta_2}$ and $(\theta_{1})_{ii}$, $(\theta_{2})_{ii}$ that are factor congruences on $\I$ and each fiber $\A_{i}$ of a P\l onka sum such that the corresponding congruences $\theta_{1}$, $\theta_{2}$ on the P\l onka sum $\A$ (via Theorem \ref{theorem: congruence5}) are \emph{not permutable}, as shown by the following example.\\
\noindent
Let $\A$ the involutive bisemilattice whose P\l onka sum is indexed over the (join-semilattice) diamond $I = \{0, i,j, 1\}$ and whose Boolean components are $B_{0} \cong B_j \cong B_1$ (copies of) the four-element Boolean algebra and $B_i$ (a copy of) the two-elements Boolean algebra, equipped with the boolean homomorphisms depicted in the following drawing. 
    \begin{center}
\begin{tikzpicture}[scale=1.2, >=stealth]

  \tikzset{
    b4node/.style={circle, draw, thick, inner sep=0pt, minimum size=6mm},
    mappa/.style={->, dashed, thick}
  }

  \begin{scope}[shift={(0,0)}]
    \node[b4node, fill=blue!20, label=above:{$1_a$}] (S1)  at (0,1.4)  {};
    \node[b4node, fill=blue!20, label=left:{$a$}]   (Sa)  at (-0.7,0.7) {};
    \node[b4node, fill=red!20,  label=right:{$a'$}] (Sap) at (0.7,0.7)  {};
    \node[b4node, fill=red!20,  label=below:{$0_a$}]  (S0)  at (0,0)    {};
    \draw (S0)--(Sa)--(S1) (S0)--(Sap)--(S1);
    \node at (0,-1) {$\B_0$};
    
    \coordinate (idStart) at (0.35, 0.35);
  \end{scope}

  \begin{scope}[shift={(4,3.5)}]
    \node[b4node, fill=blue!20, label=above:{$\top$}] (L1) at (0,1) {};
    \node[b4node, fill=red!20,  label=below:{$\bot$}] (L0) at (0,0) {};
    \draw (L0)--(L1);
    \node at (-0.5,1.7) {$\B_i$};
  \end{scope}

  \begin{scope}[shift={(4,-1.5)}]
    \node[b4node, fill=blue!20, label=30:{$1_b$}]   (D1)  at (0,1.4)  {}; 
    \node[b4node, fill=blue!20, label=135:{$b$}]  (Da)  at (-0.7,0.7) {}; 
    \node[b4node, fill=red!20,  label=right:{$b'$}] (Dap) at (0.7,0.7)  {};
    \node[b4node, fill=red!20,  label=-30:{$0_b$}]  (D0)  at (0,0)    {}; 
    \draw (D0)--(Da)--(D1) (D0)--(Dap)--(D1);
    \node at (-0.5,-0.6) {$\B_j$};
  \end{scope}

  \begin{scope}[shift={(8,0)}]
    \node[b4node, fill=blue!20, label=45:{$1_c$}]   (T1)  at (0,1.4)  {}; 
    \node[b4node, fill=blue!20, label=left:{$c$}]   (Ta)  at (-0.7,0.7) {};
    \node[b4node, fill=red!20,  label=right:{$c'$}] (Tap) at (0.7,0.7)  {};
    \node[b4node, fill=red!20,  label=-45:{$0_c$}]  (T0)  at (0,0)    {}; 
    \draw (T0)--(Ta)--(T1) (T0)--(Tap)--(T1);
    \node at (0,-1) {$\B_1$};
  \end{scope}


  \begin{scope}[mappa, gray]
    \draw (S1.60) to[bend left=25] (L1.180);
    \draw (Sa.90) to[bend left=30] (L1.200);
    \draw (Sap.90) to[bend left=20] (L0.160);
    \draw (S0.-45) to[bend right=25] (L0.210);
  \end{scope}

  \draw[mappa, blue!70!black] (idStart) to[bend right=15] node[midway, sloped, above] {\textbf{id}} (Da.180);

  \begin{scope}[mappa, orange]
    \draw (L1.0) to[bend left=25] (T1.135);
    \draw (L0.-20) to[bend right=15] (T0.160);
  \end{scope}

  \begin{scope}[mappa, green!60!black]
    \draw (D1.110) to[bend left=55] (T1.70);
    \draw (Da.110) to[bend left=65] (T1.110);
    \draw (Dap.-20) to[bend right=25] (T0.-150);
    \draw (D0.-45) to[bend right=40] (T0.-120);
  \end{scope}

\end{tikzpicture}
\end{center}

Let $C_{\theta_{1}} = Cg^{\I}(i,1)$, $C_{\theta_{2}} = Cg^{\I}(1,j)$, $(\theta_{1})_{00} = Cg^{B_{0}}(a' , 1_{a})$, $(\theta_{1})_{jj} = Cg^{B_{j}}(b',1_{b})$, $(\theta_{1})_{ii} = \nabla_{{B_{i}}} $, $(\theta_{1})_{11}=\nabla_{B_{1}}$ and $(\theta_{1})_{00} = Cg^{B_{0}}(a , 1_{a})$, $(\theta_{1})_{jj} = Cg^{B_{j}}(b, 1_{b})$, $(\theta_{1})_{ii} = \Delta_{{B_{i}}} $, $(\theta_{1})_{11}=\Delta_{B_{1}}$. Then $C_{\theta_{1}}, C_{\theta_2}$ and $(\theta_{1})_{uu}$, $(\theta_{2})_{uu}$ (for every $u\in I$) are factor congruences on $\I$ and the fibers of $\A$, respectively, which moreover satisfies the conditions of Theorem \ref{theorem: congruence5}. The corresponding congruences (see Theorem \ref{theorem: congruence5}) $\theta_{1}$, $\theta_{2}$ on $\A$ are such that $[0_{a}]_{\theta_{1}}=\{0_{a}, 0_{b}, a, b\}$, $[1_{a}]_{\theta_{1}}=\{a', 1_{a}, b', 1_{b}\}$, $[0_{c}]_{\theta_{1}}=\{\bot, \top, 0_{c}, c, c', 1_{c}\}\}$ and $[0_{a}]_{\theta_{2}}=\{a',0_{a}, \bot\}$, $[1_{a}]_{\theta_{2}}=\{a, 1_{a}, \top\}$, $[1_{b}]_{\theta_{2}}=\{b, 1_{b}, 1_{c}\}$, $[0_{b}]_{\theta_{2}}=\{0_{b},0_{c}, b'\}$, $[c]_{\theta_{2}}=\{c\}$,$[c']_{\theta_{2}}=\{c'\}$
 and \emph{non permutable}, indeed $(c,b)\in \theta_{1}\circ\theta_{2}$ but $(c,b)\not\in \theta_{1}\circ\theta_{2}$.

\end{remark}

\section{Epimorphism surjectivity and Monomorphism injectivity}

\begin{definition}
    Let $\A$ and $\B$ be algebras in a class $\mathcal{K}$ and $f\in Hom(\A,\B)$ a homomorphism. $f$ is a $\mathcal{K}$-\emph{epimorphism} if it is right-cancellative, that is if for every algebra $\mathbf{C}$ in $\mathcal{K}$ and every $g_1,g_2 \in Hom(\B, \mathbf{C})$, $g_1\circ f=g_2\circ f$ implies $g_1=g_2$.
    $f$ is a $\mathcal{K}$-\emph{monomorphism} if it is left-cancellative, that is for every algebra $\mathbf{C}$ in $\mathcal{K}$ and every $g_1,g_2 \in Hom(\mathbf{C}, \A)$, $f\circ g_1=f\circ g_2$ implies $g_1=g_2$.
\end{definition}


\noindent 

Observe that every surjective (injective) homomorphism is an epimorphism (monomorphism), while the converse might not be the case. This justifies the following.

\begin{definition}
A class of algebras $\mathcal{K}$ has \emph{epimorphism surjectivity} (\emph{ES} for short) if each epimorphism of algebras in $\mathcal{K}$ is surjective. $\mathcal{K}$ has \emph{monomorphism injectivity} if every monomorphism of algebras in $\V$ is injective.
\end{definition}





\noindent Recall that an ideal of a (join) semilattice with zero $\Se$ is a subset $I\subseteq S$ that is a downset (i.e. for every $ x\in S $ and $ y\in I $ $ x\le y $ implies $x\in I$) and is $\vee$-closed, that is for every $ x,y\in I $, $ x\vee y \in I$. Moreover, a map $\varphi\colon\mathbf{S} \rightarrow \mathbf{2}$ is a (semilattice) homomorphism into the two-element semilattice $\mathbf{2}$ if and only if $\varphi^{-1}(\{0\})$ is an ideal of $\mathbf{S}$. Finally, recall that the variety of semilattices (with zero) has ES \cite{HornKimura}.  

The concept of epimorphism (monomorphism) can be defined for arbitrary categories in the obvious way. In the some of the following results we mention epimorphisms (monomorphisms) for the category of semilattice direct systems (with zero). 

\begin{lemma}\label{lemma: ES1}
    Let $h=(\varphi,(h_i)_{i\in I})\colon\mathbb{A}\rightarrow \mathbb{B}$ be an epimorphism of semilattice direct systems with zero in a strongly irregular variety $\V$. Then $\varphi\colon\mathbf{I} \rightarrow \mathbf{K}$ is an epimorphism.
\end{lemma}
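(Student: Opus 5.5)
To show that $\varphi$ is right-cancellative, take two semilattice homomorphisms $\alpha_1,\alpha_2\colon \mathbf{K}\to \mathbf{L}$ into a semilattice with zero $\mathbf{L}$, and assume $\alpha_1\circ\varphi=\alpha_2\circ\varphi$. The plan is to lift each $\alpha_k$ to a morphism of semilattice direct systems out of $\mathbb{B}=(\{\B_l\}_{l\in K},\mathbf{K},\{q_{lm}\})$. We then use that $h$ is an epimorphism in the category of systems, and finally project back onto the semilattice component.

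Since a general target semilattice may create trouble when we build fibers, we first reduce to the target $\mathbf{2}$. Semilattices (with zero) are a subdirect power of $\mathbf{2}$, so homomorphisms into $\mathbf{L}$ are separated by composing with homomorphisms $\mathbf{L}\to\mathbf{2}$. It therefore suffices to treat $\alpha_1,\alpha_2\colon\mathbf{K}\to\mathbf{2}$, i.e.\ ideals of $\mathbf{K}$ as recalled just before the lemma.

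Given such an $\alpha\colon\mathbf{K}\to\mathbf{2}$, we build a target system $\mathbb{C}$ over $\mathbf{2}$. The natural choice is $\mathbf{C}_0:=\prod_{l\in\alpha^{-1}(0)}\B_l$ (or simply a product of all fibers) and $\mathbf{C}_1$ the trivial algebra. This is problematic, however, because the two $\alpha_k$ would give different targets. We therefore fix one system $\mathbb{C}$ over $\mathbf{2}$, independent of $\alpha$:
\[
\mathbf{C}_0:=\prod_{l\in K}\B_l \in\V,\qquad \mathbf{C}_1:=\text{trivial algebra}.
\]
The transition $\mathbf{C}_0\to\mathbf{C}_1$ is the unique map. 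We then define $g^\alpha=(\alpha,\{g^\alpha_l\})$ by letting $g^\alpha_l\colon\B_l\to\mathbf{C}_{\alpha(l)}$ be the trivial map when $\alpha(l)=1$. When $\alpha(l)=0$, we need a homomorphism $\B_l\to\prod_m\B_m$ compatible with the $q$'s. The candidate is $b\mapsto (q_{l,l\vee m}(b)\text{ projected})_m$, but this lands in $\B_{l\vee m}$ rather than $\B_m$. To repair this, replace $\mathbf{C}_0$ by $\prod_{m\in K}\B_m$ indexed so that the $m$-th coordinate of the image of $b\in B_l$ is $q_{l,l\vee m}(b)\in B_{l\vee m}$. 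That is, take $\mathbf{C}_0:=\prod_{m\in K}\prod_{n\ge m}\B_n$ with the $(m,n)$-coordinate of $b\in B_l$ equal to $q_{l,n}(b)$ if $l\le n$. Coordinates with $l\not\le n$ are exactly where compatibility fails, so we send them to a fixed constant image if the type has constants, which is the case here since $\mathbb{B}$ has zero. Checking commutativity of the squares for $l\le l'$ with $\alpha(l)=\alpha(l')=0$ reduces to $q_{l'n}\circ q_{ll'}=q_{ln}$. The squares where $\alpha(l')=1$ are trivial.

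With $g^{\alpha_1},g^{\alpha_2}\colon\mathbb{B}\to\mathbb{C}$ in hand, we first check that $g^{\alpha_1}\circ h=g^{\alpha_2}\circ h$. On the semilattice part this is the hypothesis. On fibers, $g^{\alpha_k}_{\varphi(i)}\circ h_i$ depends only on $\alpha_k(\varphi(i))$, which is the same for $k=1,2$. By the epimorphism hypothesis $g^{\alpha_1}=g^{\alpha_2}$, so in particular $\alpha_1=\alpha_2$.

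The main obstacle is making $\mathbb{C}$ a legitimate object, namely making the fiber maps $g^\alpha_l$ genuine homomorphisms that commute with the transitions while staying independent of $\alpha$. If the coordinate trick above fails, the fallback is to take $\mathbf{C}_0$ and $\mathbf{C}_1$ both trivial. This forces all fiber maps to coincide, so that $h$ being an epimorphism in the category of systems still yields $\alpha_1=\alpha_2$. One must then only verify that a trivial algebra is in $\V$ (it is) and that a system over $\mathbf{2}$ with trivial fibers is allowed, i.e.\ the P\l onka sum is the two-element semilattice in $R(\V)$. This fallback in fact seems simplest and sufficient, and I would try it first.
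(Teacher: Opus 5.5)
Your fallback argument is correct, and its key device is the paper's. The paper also lifts a semilattice homomorphism $\psi\colon\mathbf{K}\to\mathbf{2}$ to a morphism $\psi^{\star}\colon\mathbb{B}\to\mathbb{S}_2$ into the system over $\mathbf{2}$ with two trivial fibers. The fiber components are then forced, so the morphism is determined by its semilattice part.

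The difference lies in how the two distinct maps into $\mathbf{2}$ are produced.
\begin{itemize}
\item The paper argues by contradiction from a point $a\in K\setminus\varphi(I)$. Following Horn--Kimura, it uses the ideals $J_1={\downarrow}a$ and $J_2=\{x : x\le z<a \text{ for some } z\in\varphi(I)\}$, which agree on $\varphi(I)$ but differ at $a$. This shows directly that $\varphi$ is \emph{surjective}, which is what the proof of Theorem \ref{th: ES per R(V)} later invokes (``the claim follows from the proof of Lemma \ref{lemma: ES1}'').
\item You prove right-cancellability directly, reducing an arbitrary target $\mathbf{L}$ to $\mathbf{2}$ because homomorphisms $\mathbf{L}\to\mathbf{2}$ separate points. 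This is a clean proof of the statement as written. To recover surjectivity, however, you then need the Horn--Kimura theorem (ES for semilattices with zero) as a black box.
\end{itemize}

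Drop the product-based construction entirely; it does not work as stated.
\begin{itemize}
\item The ``zero'' of $\mathbb{B}$ is the least element of $\mathbf{K}$, not a constant symbol of the type.
\item Even when the type has constants, a constant coordinate map is generally not a homomorphism. For Boolean algebras, sending everything to $0$ fails to preserve $1$.
\item The squares also fail for $l\le l'$ at coordinates $n$ with $l\le n$ but $l'\not\le n$: there $g_l(b)$ has $q_{ln}(b)$ while $g_{l'}(q_{ll'}(b))$ has the constant.
\end{itemize}
In any case the fibers play no role, since the semilattice component alone distinguishes $g^{\alpha_1}$ from $g^{\alpha_2}$.
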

\begin{proof}
    Suppose by contradiction that $\varphi\colon\mathbf{I} \rightarrow \mathbf{K}$ is not an epimorphism, that is $\varphi$ is not surjective. Then, using the technique in (the proof of) \cite[Theorem 1.1]{HornKimura}, consider two subsets of $K$ defined as follows: 
    \begin{itemize}
    \item $J_1=\downarrow a=\{x\in K\;:\; x \le a\}$; 
    \item $J_2=\{x\in K\;:\; \exists z\in \varphi(I) \text{ such that } x \le z < a\}$.
    \end{itemize} 
It is readily checked that $J_{1}$, $J_{2}$ are ideals such that $J_{1}\neq J_{2}$ and $J_{1}\cap\varphi(I) =J_{2}\cap\varphi(I)$. This implies that we can define two homomorphisms $\psi_1, \psi_2\colon\mathbf{K} \rightarrow \mathbf{2}$ such that $\psi_1 \neq \psi_2$ and $\psi_1\circ\varphi=\psi_2\circ\varphi$ (by setting $\psi_{i}(\{0\}) = J_{i}$, with $i\in\{1,2\}$). Given a semilattice homomorphism $\psi:\mathbf{K} \rightarrow \mathbf{2}$, consider the following between semilattice direct systems $\psi^{\star}:\mathbb{B} \rightarrow \mathbb{S}_2$, where $\mathbb{S}_2$ is the semilattice direct system formed by two trival algebras in $\V$ (and indexed over $\mathbf{2}$), defined as $\forall k\in K, \forall a\in \A_k: \psi^{\star}(a)=\psi(k)$. 
By construction we have that $\psi_1^{\star}\circ h=\psi_2^{\star}\circ h$, but $\psi_1^{\star} \neq \psi_2^{\star}$, in contrast with the hypothesis that $h$ is an epimorphism of semilattice direct systems with zero. 
\end{proof}

\begin{lemma}\label{lemma: ES2}
    Let $\A$ be a P\l onka sum over a semilattice direct system $\mathbb{A}=((A_i)_{i\in I}, (I, \le), (p_{ij})_{i\le j})$ in a strongly irregular variety $\V$ and let $g\in Hom(\A_i, \B)$, with $\B \in \V$, for some $i\in I$. Then $g $ can be extended to a homomorphism $h_g \in Hom(\A, \B^{\ast})$. 
\end{lemma}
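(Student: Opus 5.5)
The idea is to build $h_g$ as a morphism of semilattice direct systems and then use the categorical equivalence between $R(\V)$ and semilattice direct systems of $\V$-algebras (\cite[Th. 3.3.8]{Bonziobook}), which we already used in the proof of Theorem \ref{th: caratterizzazione libere}. Recall that $\B^{\ast}$ is the P\l onka sum over the two-element chain $\{0<1\}$, with fiber $\B$ at $0$ and the trivial algebra $\{\infty\}$ at $1$, and with $p_{01}$ the constant map onto $\infty$.

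\textbf{Step 1: the index map.} Define $\varphi\colon I\to\{0,1\}$ by $\varphi(k)=0$ if $k\le i$ and $\varphi(k)=1$ otherwise. Then $\varphi^{-1}(\{0\})=\downarrow i$ is an ideal of $\I$, since it is a downset closed under joins. So $\varphi$ is a semilattice homomorphism, and it preserves the zero because $i_0\le i$.

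\textbf{Step 2: the fiber maps.} For $k\le i$ put $h_k:=g\circ p_{ki}\colon\A_k\to\B$. This is a composite of homomorphisms, hence a homomorphism. For $k\not\le i$ let $h_k\colon\A_k\to\{\infty\}$ be the constant map, which is trivially a homomorphism. In particular $h_i=g$, so $h_g$ extends $g$.

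\textbf{Step 3: commuting squares.} Take $k\le l$ in $I$. There are three cases.
\begin{itemize}
\item If $l\le i$, then $h_l\circ p_{kl}=g\circ p_{li}\circ p_{kl}=g\circ p_{ki}=h_k$, and the transition map in $\B^{\ast}$ is the identity on $\B$.
\item If $k\le i$ but $l\not\le i$, both paths around the square end in the trivial fiber, so they agree.
\item If $k\not\le i$, then also $l\not\le i$, and again everything lands in $\{\infty\}$.
\end{itemize}
The case $l\le i$ with $k\not\le i$ cannot occur, since $k\le l$.

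Hence $(\varphi,\{h_k\}_{k\in I})$ is a morphism of semilattice direct systems (with zero). By the equivalence it induces a homomorphism $h_g\colon\A\to\B^{\ast}$ defined by $h_g(a)=h_k(a)$ for $a\in A_k$, and this map restricts to $g$ on $A_i$.

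\textbf{Main obstacle: constants.} The only delicate point is the nullary operations. Since $\varphi(i_0)=0$, each constant $c^{\A}=c^{\A_{i_0}}$ is sent to $g(p_{i_0i}(c^{\A_{i_0}}))=c^{\B}$, which is the correct interpretation of $c$ in $\B^{\ast}$ because $\B$ is the bottom fiber. If one prefers to avoid citing the equivalence, this can be checked directly: for an $n$-ary operation applied to $a_1\in A_{k_1},\dots,a_n\in A_{k_n}$, split according to whether $k_1\vee\dots\vee k_n\le i$, using that $p_{k_j i}=p_{k i}\circ p_{k_j k}$ where $k=k_1\vee\dots\vee k_n$.
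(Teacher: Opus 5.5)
Your proof is correct, and the map you construct is exactly the paper's $h_g$: it is $g\circ p_{ki}$ on the fibers $A_k$ with $k\le i$, and the constant $\infty$ elsewhere. The two proofs differ only in how the homomorphism property is checked.

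The paper verifies $h_g(f^{\A}(a_1,\dots,a_n))=f^{\B^{\ast}}(h_g(a_1),\dots,h_g(a_n))$ by hand. It splits on whether $k_1\vee\dots\vee k_n\le i$, which is precisely the alternative you sketch in your last sentence. You instead present $h_g$ as a morphism $(\varphi,\{h_k\}_{k\in I})$ of semilattice direct systems with zero, where $\varphi$ is the characteristic map of the ideal $\downarrow i$. You then appeal to the equivalence of \cite[Th. 3.3.8]{Bonziobook}.

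Your packaging has three advantages:
\begin{itemize}
\item it isolates the structural content, namely that $\downarrow i$ is an ideal and the three commuting-square cases;
\item it matches the way homomorphisms are treated as pairs $(\varphi_g,\{g_i\})$ in Lemma \ref{lemma: ES1} and Theorem \ref{th: ES per R(V)};
\item it handles the nullary operations explicitly, which the paper's computation passes over silently.
\end{itemize}

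The paper's direct check, in turn, is self-contained and does not rely on the cited equivalence being formulated for types with constants and zero-preserving $\varphi$. Since you also indicate the direct verification and treat constants separately, your proof does not actually depend on that point either.
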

\begin{proof}
Let $i\in I$ and $h_g\colon A\to B^{\ast}$ be defined as follows: 
\begin{equation}\label{eq: def h_g}
a \mapsto h_{g}(a):= \begin{cases}
    g\circ p_{ji}(a), \text{ if } a\in A_{j} \text{ with } j\leq i;\\
    \infty, \text{ otherwise}.
\end{cases}
\end{equation}
The map $h_g$ is clearly well defined and an extension of $g$ (as $p_{ii}= id_{\A_i}$). We only have to check that $h_g$ is a homomorphism, i.e. that $h_g(f^{\A}(a_{1},\dots,a_{n})) = f^{\B^{\ast}}(h_g(a_{1}),\dots, h_g(a_{n}))$, for any $n$-ary operation $f$ (in the type of $R(\V)$). To this end, it is enough to consider the following (exaustive) two cases: \\
1) $a_{j}\in A_{j}$ with $j\leq i$, for all $j\in\{1,\dots, n\}$. In this case $1\vee\dots\vee n = k\leq i$ (with a slight abuse of notation we are here using $1,\dots, n$ also as indexes for the algebras the elements $a_{1},\dots a_{n}$ belong to), therefore we have:
\begin{align*}
   h_g(f^{\A}(a_{1},\dots,a_{n})) &= h_g(f^{\A_k}(p_{1k}(a_{1}),\dots,p_{nk}(a_{n})))  \\
    &= g(p_{ki}(f^{\A_k}(p_{1k}(a_{1}),\dots,p_{nk}(a_{n})))) & \text{(definition of $h_g$)} \\
    &= g(f^{\A_i}(p_{ki}(p_{1k}(a_{1})),\dots,p_{ki}(p_{nk}(a_{n})))) & \text{($p_{ki}$ homomorphism)} \\
    & = g(f^{\A_i}(p_{1i}(a_{1}),\dots,p_{ni}(a_{n}))) \\
    &= f^{\B^{\ast}}((g(p_{1i}(a_{1}))), \dots ,g(p_{ni}(a_{n}))) & \text{($g$ homomorphism)}\\
    &= f^{\B^{\ast}}(h_g(a_{1}),\dots, h_g(a_{n}))) & \text{(definition of $h_g$)}
\end{align*}
2) $a_{j}\in A_{j}$ with $j\nleq i$, for some $j\in\{1,\dots, n\}$. Then $1\vee\dots\vee n = k\nleq i$ (as if $k\leq i$ then $j\nleq i$). Therefore $h_g(f^{\A}(a_{1},\dots,a_{n})) =h_g(f^{\A_k}(p_{1k}(a_{1}),\dots,p_{nk}(a_{n}))) = \infty  $ (as $k\nleq i$) and, on the other hand, $f^{\B^{\ast}}(h_g(a_{1}),\dots, h_g(a_{n})) = \infty $, as $h_g(a_{j}) = \infty$. 
\end{proof}



\begin{theorem}\label{th: ES per R(V)}
    Let $\V$ be a strongly irregular variety with epimorphism surjectivity, then its regularization $R(\V)$ has epimorphism surjectivity.
\end{theorem}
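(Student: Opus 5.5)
Let $h\colon\A\to\B$ be an epimorphism in $R(\V)$. I will pass, via the categorical equivalence of \cite[Th. 3.3.8]{Bonziobook}, to a morphism $(\varphi,(h_i)_{i\in I})\colon\mathbb{A}\to\mathbb{B}$ of semilattice direct systems with zero. Here $\mathbb{A}$ is indexed by $\I$, $\mathbb{B}=(\{\B_k\}_{k\in K},\mathbf{K},\{q_{kl}\})$ is indexed by $\mathbf{K}$, and $h(a)=h_i(a)$ for $a\in A_i$. Because this is an equivalence of categories, the pair is an epimorphism of systems. Lemma \ref{lemma: ES1} then gives that $\varphi$ is an epimorphism of semilattices with zero. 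Since semilattices (with zero) have ES \cite{HornKimura}, $\varphi$ is surjective.

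It remains to show that every $b\in B_k$ lies in the image of $h$. Let $I_k=\varphi^{-1}(k)\neq\emptyset$. I would show that $B_k$ equals the union over $i\in I_k$ of $h_i(A_i)$. The first idea is to fix $i\in I_k$ and prove that $h_i\colon\A_i\to\B_k$ is a $\V$-epimorphism, so that ES in $\V$ makes it surjective. This is probably false for a single $i$, since different fibers over $k$ contribute different images. The better plan is as follows. The set $I_k$ is a subsemilattice of $\I$, because $\varphi$ preserves joins. For $i\le i'$ in $I_k$ we have $q_{kk}=\mathrm{id}$, so the commuting square gives $h_{i'}\circ p_{ii'}=h_i$. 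Hence $h_i(A_i)\subseteq h_{i'}(A_{i'})$, the images form an up-directed family, and their union $U$ is a subuniverse of $\B_k$. I then aim to show that the inclusion $\mathbf{U}\hookrightarrow\B_k$ is a $\V$-epimorphism. By ES for $\V$ this forces $U=B_k$, which is enough.

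To prove that the inclusion is an epimorphism, take $g_1,g_2\colon\B_k\to\mathbf{C}$ in $\V$ that agree on $U$. Lemma \ref{lemma: ES2} extends each $g_t$ to $h_{g_t}\colon\B\to\mathbf{C}^{\ast}$: it sends $b'\in B_l$ with $l\le k$ to $g_t(q_{lk}(b'))$ and all other elements to $\infty$. I claim $h_{g_1}\circ h=h_{g_2}\circ h$. Take $a\in A_j$. If $\varphi(j)\not\le k$, both sides equal $\infty$. If $\varphi(j)\le k$, pick any $i\in I_k$ and set $i'=i\vee j$. Then $\varphi(i')=k$, so $i'\in I_k$, and
\[
q_{\varphi(j)k}(h_j(a))=h_{i'}(p_{ji'}(a))\in U ,
\]
so $g_1$ and $g_2$ agree on this element. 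Since $h$ is an epimorphism in $R(\V)$ and $\mathbf{C}^{\ast}\in R(\V)$, we get $h_{g_1}=h_{g_2}$. Restricting to $B_k$ gives $g_1=g_2$. Therefore the inclusion is a $\V$-epimorphism, $U=B_k$, and $h$ is surjective.

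The main obstacle is the constant case, where $\B^{\ast}$ and Lemma \ref{lemma: ES2} have to be handled with care. Constants live in the bottom fiber, and $h_{g}$ must send $c^{\B}$ to $c^{\mathbf{C}^{\ast}}$. This holds because $c^{\B}$ lies in the bottom fiber $l_0\le k$, so $h_g(c^{\B})=g(q_{l_0k}(c^{\B}))=c^{\mathbf{C}}$, and the constants of $\mathbf{C}^{\ast}$ are those of $\mathbf{C}$. A second point to check is that the transfer of epimorphisms through the categorical equivalence, which feeds into Lemma \ref{lemma: ES1}, is legitimate. The argument above also makes this unnecessary: one can directly use the test maps $\psi^{\star}$ into the Płonka sum of two trivial algebras, which lies in $R(\V)$, to get surjectivity of $\varphi$ by the Horn–Kimura argument.
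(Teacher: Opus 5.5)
Your proof is correct, but it departs from the paper's at the decisive step. The first half matches the paper: you get surjectivity of $\varphi$ from Lemma \ref{lemma: ES1}, via the Horn--Kimura test maps $\psi^{\star}$ into the two-element semilattice, which lies in $R(\V)$. For the fibers, the paper does the following:
\begin{enumerate}
\item it fixes a single $i$ whose component $h_i$ is not surjective;
\item it takes $g_1\neq g_2\colon\B_{\varphi(i)}\to\mathbf{C}$ that agree on $h_i(A_i)$;
\item it extends them by Lemma \ref{lemma: ES2} and asserts that their composites with $h$ coincide.
\end{enumerate}

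Your suspicion that a single fiber does not suffice is justified. Take $a\in A_j$ with $\varphi(j)\le\varphi(i)$ but $j\not\le i$. The extended maps evaluate $g_t$ at $h_{i\vee j}(p_{j,i\vee j}(a))$, and this element need not lie in $h_i(A_i)$. A concrete case, with $\V$ the variety of groups:
\begin{itemize}
\item let $\A$ be the P\l onka sum over $\{0<1\}$ with $\A_0$ trivial and $\A_1=\mathbb{Z}$;
\item let $\B=\mathbb{Z}\times\mathbb{Z}$, with a single fiber;
\item let $h_1(n)=(n,0)$.
\end{itemize}
Then $h$ and $h_0$ are both non-surjective. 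The first projection and the trivial homomorphism $\mathbb{Z}\times\mathbb{Z}\to\mathbb{Z}$ agree on $h_0(A_0)$, yet their composites with $h$ differ on $A_1$.

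Choosing $i$ to be the greatest element of $\varphi^{-1}(k)$, when one exists, repairs the paper's argument, since then $h_i(A_i)=U$. When the preimage has no top element, however, one needs exactly your directed union $U$. ES in $\V$ must then be applied to the inclusion $\mathbf{U}\hookrightarrow\B_k$ rather than to an individual $h_i$.

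Your route therefore costs only a small check: that the images $h_i(A_i)$, $i\in\varphi^{-1}(k)$, form an up-directed family of subuniverses containing the constants. In return it actually establishes $h_{g_1}\circ h=h_{g_2}\circ h$, which the paper's shorter argument asserts without justification.
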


\begin{proof}
    Suppose that $\V$ has $ES$ and, by contradiction, that $R(\V)$ has not $ES$; that is there exist $\A, \B \in R(\V)$ and an epimorphism $g \in Hom(\A, \B)$ which is not surjective. We claim that there exist $\mathbf{C} \in \V$ and $h_1, h_2 \in Hom(\B, \mathbf{C}^*)$ such that $h_1 \neq h_2$ and $h_1 \circ g=h_2 \circ g$, which will be a contradiction to the fact that $g$ is by hypothesis an epimorphism. Since $g$ is not surjective, $\varphi_g$ is not surjective or there exists $i \in I$ such that $g_i: A_i \rightarrow B_{\varphi_g(i)}$ is not surjective. In the first case, the claim follows from the proof of Lemma \ref{lemma: ES1}. In the second case, $g_i:A_i \rightarrow B_{\varphi_g(i)}$ is not an epimorphism (since by hypothesis $\V$ has epimorphism surjectivity), so there exists $\mathbf{C} \in \V$ and $f_1, f_2 \in Hom(\B_{\varphi_g(i)}, \mathbf{C})$ such that $f_1 \circ g_i=f_2 \circ g_i$ and $f_1 \neq f_2$, while by Lemma \ref{lemma: ES2} we can extend these maps to homomorphisms $h_{f_1}, h_{f_2} \in Hom(\B, \mathbf{C}^*)$. It follows immediately from (\ref{eq: def h_g}) that $h_{f_1} \circ g=h_{f_2} \circ g$ and $h_{f_1} \neq h_{f_2}$. 
\end{proof}

\begin{example}
It follows from Theorem \ref{th: ES per R(V)} that the varieties of \emph{Clifford semigroups}, \emph{involutive bisemilattices} and \emph{dual weak braces} have $ES$. For the latter, observe that it is easy to show (using the fact that the variety of groups has ES) that the variety of \emph{skew braces} has ES, hence by Theorem \ref{th: ES per R(V)} also that of \emph{dual weak braces}.       
\end{example}

\noindent The proof strategy adopted in Theorem \ref{th: ES per R(V)} allows us to prove a stronger result, stated as follows.


\begin{theorem}\label{th: ES K}
    Let $\V$ be a strongly irregular variety with $ES$ and $\mathcal{K}$ a subclass of its regularization $R(\V)$ such that  for every $ \mathbf{C} \in \V$, there exists $ \mathbf{D} \in \mathcal{K}$ such that $\mathbf{C}^*  $ embeds in $ \mathbf{D}$. Then $\mathcal{K}$ has epimorphism surjectivity.
\end{theorem}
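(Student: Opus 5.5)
The plan is to repeat the argument of Theorem \ref{th: ES per R(V)}, with one change: the separating maps will land in algebras of $\mathcal{K}$, reached through the embeddings $\mathbf{C}^*\hookrightarrow\mathbf{D}$. Suppose $g\in Hom(\A,\B)$ is a $\mathcal{K}$-epimorphism with $\A,\B\in\mathcal{K}\subseteq R(\V)$, and suppose it is not surjective. By Theorem \ref{th: regularization of st.irregular varieties}, $\A$ and $\B$ are P\l onka sums over semilattice direct systems of algebras in $\V$, indexed by $\I$ and $\mathbf{K}$. Under the categorical equivalence, $g$ corresponds to a morphism $(\varphi_g,(g_i)_{i\in I})$ of these systems. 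Non-surjectivity of $g$ then means that either $\varphi_g$ is not surjective, or some $g_i\colon\A_i\to\B_{\varphi_g(i)}$ is not surjective. In each case I will build $\mathbf{C}\in\V$ and $h_1\neq h_2\in Hom(\B,\mathbf{C}^*)$ with $h_1\circ g=h_2\circ g$.

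\emph{Case 1: some $g_i$ is not surjective.} Since $\V$ has ES, $g_i$ is not a $\V$-epimorphism. So there are $\mathbf{C}\in\V$ and $f_1\neq f_2\in Hom(\B_{\varphi_g(i)},\mathbf{C})$ with $f_1\circ g_i=f_2\circ g_i$. Lemma \ref{lemma: ES2} extends $f_1,f_2$ to $h_{f_1},h_{f_2}\in Hom(\B,\mathbf{C}^*)$, and these still differ. The equality $h_{f_1}\circ g=h_{f_2}\circ g$ follows from \eqref{eq: def h_g}, exactly as in the proof of Theorem \ref{th: ES per R(V)}. This step is where I expect the main obstacle. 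It needs that any $a\in A_{i'}$ with $\varphi_g(i')\le\varphi_g(i)$ is sent to $g\circ$-images agreeing under $f_1,f_2$. I would check this by writing $p_{\varphi_g(i')\varphi_g(i)}(g_{i'}(a))=g_{i\vee i'}(p_{i',i\vee i'}(a))$ and comparing with $i\vee i'$. If equality holds only when $\varphi_g(i\vee i')=\varphi_g(i)$, I would first choose $i$ maximal in its $\varphi_g$-class with $g_i$ non-surjective, to make this work.

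\emph{Case 2: $\varphi_g$ is not surjective.} The proof of Lemma \ref{lemma: ES1} gives two distinct semilattice homomorphisms $\psi_1,\psi_2\colon\mathbf{K}\to\mathbf{2}$ that agree on $\varphi_g(I)$. These induce $\psi_1^\star\neq\psi_2^\star$ from $\B$ onto the two-element P\l onka sum of trivial algebras, which is $\mathbf{T}^*$ for $\mathbf{T}$ the trivial algebra of $\V$. They satisfy $\psi_1^\star\circ g=\psi_2^\star\circ g$. So again the target has the form $\mathbf{C}^*$ with $\mathbf{C}\in\V$.

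In both cases, apply the hypothesis on $\mathcal{K}$ to get $\mathbf{D}\in\mathcal{K}$ and an embedding $e\colon\mathbf{C}^*\to\mathbf{D}$. Then $e\circ h_1$ and $e\circ h_2$ are homomorphisms $\B\to\mathbf{D}$ with $e\circ h_1\circ g=e\circ h_2\circ g$. Since $e$ is injective and $h_1\neq h_2$, we get $e\circ h_1\neq e\circ h_2$. This contradicts $g$ being a $\mathcal{K}$-epimorphism, so $\mathcal{K}$ has ES.
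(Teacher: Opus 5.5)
\paragraph{Gap in Case~1.} Your overall plan matches the paper's: obtain $h_1\neq h_2\colon\B\to\mathbf{C}^*$ with $h_1\circ g=h_2\circ g$ by the argument of Theorem~\ref{th: ES per R(V)}, then post-compose with an embedding $\mathbf{C}^*\hookrightarrow\mathbf{D}\in\mathcal{K}$. Your remark that the semilattice case lands in $\mathbf{T}^*$ is correct and needed. The gap is in Case~1, where you suspected it.

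Write $k=\varphi_g(i)$ and $I_k=\varphi_g^{-1}(k)$. For $a\in A_{i'}$ with $\varphi_g(i')\le k$ you get $h_f(g(a))=f\bigl(g_{i\vee i'}(p_{i',i\vee i'}(a))\bigr)$ with $i\vee i'\in I_k$. So you need $f_1=f_2$ on $g_j(A_j)$ for every $j\in I_k$, not only on $g_i(A_i)$.

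Your repair does not supply this in general. $I_k$ is closed under joins, so under either reading of ``maximal'' what you actually need is an $i$ that is the greatest element of $I_k$ with $g_i$ non-surjective. If some $j\in I_k$ lies strictly above $i$ and $g_j$ is onto $B_k$, then $f_1=f_2$ is forced. Such an $i$ exists when every $I_k$ has a top, e.g.\ for finite $I$, but not in general.

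\paragraph{Counterexample.} Let $\A$ be the P\l onka sum over $(\mathbb{N},\le)$ of the groups $2^{-n}\mathbb{Z}$, with inclusions as transition maps. Let $\B=\mathbb{Q}$ and let $g$ be the union of the inclusions.
\begin{itemize}
\item $g$ is not surjective, every $g_n$ is non-surjective, and $I_0=\mathbb{N}$ has no top.
\item For each $n$, the projection $f_1\colon\mathbb{Q}\to\mathbb{Q}/2^{-n}\mathbb{Z}$ and $f_2=0$ agree on $g_n(A_n)$.
\item Yet $h_{f_1}\circ g$ and $h_{f_2}\circ g$ differ at $2^{-n-1}\in A_{n+1}$.
\end{itemize}
The paper's proof of Theorem~\ref{th: ES per R(V)} treats this equality as immediate from \eqref{eq: def h_g}. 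So citing that proof does not close the gap; it contains the same one.

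\paragraph{The fix.} Use the whole class $I_k$ rather than a single fiber map. For $k\in\varphi_g(I)$ set $T_k=\bigcup_{j\in I_k}g_j(A_j)$.
\begin{itemize}
\item For $j,j'\in I_k$ we have $g_{j\vee j'}\circ p_{j,j\vee j'}=g_j$, because $q_{kk}$ is the identity. So these images form a directed family of subuniverses of $\B_k$, and $T_k$ is a subuniverse.
\item By construction $g(A)\cap B_k=T_k$. Hence if $\varphi_g$ is onto and $g$ is not, some $T_k\neq B_k$.
\item Apply ES of $\V$ to the inclusion $\mathbf{T}_k\hookrightarrow\B_k$. This gives $f_1\neq f_2$ agreeing on $T_k$.
\item By the formula above, $h_{f_1}\circ g=h_{f_2}\circ g$, since every relevant argument of $f$ lies in $T_k$ and the remaining values are $\infty$.
\end{itemize}
Replace Case~1 with this, and keep your Case~2 for $\varphi_g$ not onto. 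The rest of your argument, including the final embedding step, then goes through.
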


\begin{proof}
    Let $\A, \B \in \mathcal{K}$ and $h \in Hom(\A, \B)$ be a $\mathcal{K}$-epimorphism. We show that actually $h$ is also an $R(\V)$-epimorphism (to get the desired conclusion). Suppose, by contradiction, $h$ is not an $R(\V)$-epimorphism, then, applying the same argument in the proof of Theorem \ref{th: ES per R(V)} we have that there exist $\mathbf{C} \in \V$ and $g_1, g_2 \in Hom(\B, \mathbf{C}^*)$ such that $g_1 \neq g_2$ and $g_1 \circ h= g_2 \circ h$. By hypothesis there exists $\mathbf{D} \in \mathcal{K}$ such that $\mathbf{C}^*$ embeds in $\mathbf{D}$ via an injective homomorphism $\iota\colon \mathbf{C}^* \rightarrow \mathbf{D}$, therefore $(\iota \circ g_1) \circ h = (\iota \circ g_2) \circ h$. Therefore $\iota \circ g_1,\ \iota \circ g_2\colon \mathbf{B} \rightarrow \mathbf{D}^*$ are $\mathcal{K}$-homomorphisms and $h$ is, by assumption, a $\mathcal{K}$-epimorphism, so we deduce that $\iota \circ g_1 = \iota \circ g_2$, hence $g_1=g_2$ (since $\iota$ is injective), in contrast with the hypothesis $g_1 \neq g_2$. So $h$ is a $R(\V)$-epimorphism, therefore it is surjective, since $R(\V)$ has epimorphism surjectivity by Theorem \ref{th: ES per R(V)}.
\end{proof}



\noindent 

\noindent Recall that the two-element chain can be seen both as a semilattice and as a semilattice directed system (composed of two trivial algebras): in the following lemmas, we will denote the former by $\mathbf{2}$ and the latter by $\mathbbm{2}$. 

\begin{lemma}\label{lemma: MI1}
    Let $h=(\varphi,(h_i)_{i\in I})\colon \mathbb{A}\rightarrow \mathbb{B}$ be a monomorphism of semilattice direct systems with zero in a strongly irregular variety $\V$, then $\varphi  \colon\mathbf{I} \rightarrow \mathbf{K}$ is a monomorphism.
\end{lemma}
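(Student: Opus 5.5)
Suppose $\varphi$ is not a monomorphism of semilattices with zero. I would then produce two distinct morphisms of semilattice direct systems into $\mathbb{A}$ that become equal after composing with $h$. Since we are in a concrete setting, failure of left-cancellation can be witnessed through the free object on one generator. So I would first reduce to: $\varphi$ is not injective. This reduction is justified because, if $\varphi$ fails to be left-cancellative, it must identify two distinct elements $i\neq j$ of $I$ with $\varphi(i)=\varphi(j)$. Indeed, two homomorphisms $\alpha,\beta\colon \Se\to\I$ with $\varphi\alpha=\varphi\beta$ and $\alpha\neq\beta$ give some $s$ with $\alpha(s)\neq\beta(s)$ but the same image.

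So assume $i\neq j$ in $I$ with $\varphi(i)=\varphi(j)$. Replacing $j$ by $i\vee j$ if needed, I may assume $i<j$: if $i\vee j\neq i$, then $\varphi(i\vee j)=\varphi(i)\vee\varphi(j)=\varphi(i)$, so the pair $(i,i\vee j)$ works; otherwise $j<i$ and I swap the roles. The test object is the two-element system $\mathbbm{2}$, built from two trivial algebras of $\V$ over $\mathbf{2}=\{0<1\}$.

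I now define two morphisms $\alpha,\beta\colon\mathbbm{2}\to\mathbb{A}$. The index maps are $\alpha_0(0)=i_0$, $\alpha_0(1)=i$ and $\beta_0(0)=i_0$, $\beta_0(1)=j$. These are zero-preserving semilattice homomorphisms $\mathbf{2}\to\I$, and they differ since $i\neq j$. For the fiber maps I need homomorphisms from the trivial algebra into $\A_i$ and $\A_j$, and here lies the main obstacle: a trivial algebra $\mathbf{T}$ maps into $\A_k$ only if $\A_k$ has an idempotent element, that is a trivial subalgebra. When the type contains constants this may fail, for example for groups with $e$ it holds, but for rings with $1$ it does not.

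To get around this, I would take as test system not $\mathbbm{2}$ but the system over $\mathbf{2}$ whose fibers are both $\A_{i_0}$, with identity transition map. I map it into $\mathbb{A}$ by $(\alpha_0,\{id,p_{i_0 i}\})$ and $(\beta_0,\{id,p_{i_0 j}\})$; commutativity of the squares follows from the composition law $p_{i_0k}=p_{ik}\circ p_{i_0i}$. Composing with $h$, the semilattice parts agree because $\varphi(i)=\varphi(j)$. On the top fiber I get $h_i\circ p_{i_0i}=q_{\varphi(i_0)\varphi(i)}\circ h_{i_0}=q_{\varphi(i_0)\varphi(j)}\circ h_{i_0}=h_j\circ p_{i_0j}$, using that $h$ is a morphism of systems. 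The bottom components coincide trivially. So $h\circ\alpha=h\circ\beta$ while $\alpha\neq\beta$, because their index maps differ. This contradicts $h$ being a monomorphism.

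The step I expect to need care is the initial reduction from non-monomorphism to non-injectivity, which is routine for semilattices with zero. In the absence of constants, the analogous argument uses the fiber $\A_i$ itself, over a one-element or two-element index, as the test object.
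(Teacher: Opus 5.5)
Your proof is correct, but it uses a different test object from the paper's, and the difference matters.

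The paper takes the system $\mathbbm{2}$ of two trivial algebras and the \emph{constant} maps $\mathbf{2}\to\mathbf{I}$ onto $i_1$ and $i_2$. It then simply asserts that there are morphisms $k_1,k_2\colon\mathbbm{2}\to\mathbb{A}$ over these maps with $h\circ k_1=h\circ k_2$. This is shorter, but it tacitly needs three things:
(a) constant maps must be admissible index maps, although they do not preserve the least element unless their value is $i_0$. Preserving $i_0$ is forced once the type has constants, since constants live in the bottom fiber.
(b) the fibers $\A_{i_1},\A_{i_2}$ must have one-element subalgebras. This fails, for instance, for the Boolean fibers of an involutive bisemilattice, exactly as you observed for unital rings.
(c) the chosen one-element images must be identified by $h_{i_1}$ and $h_{i_2}$. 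This is not automatic: for groups it holds because both images are identities, but in general they must be chosen compatibly.

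Your test system over $\mathbf{2}$, with both fibers (disjoint copies of) $\A_{i_0}$ and components $(\mathrm{id},p_{i_0i})$ and $(\mathrm{id},p_{i_0j})$, avoids all three problems. Sending $0\mapsto i_0$ keeps the index maps zero-preserving. The components exist in any type. The equality $h\circ\alpha=h\circ\beta$ follows from the morphism squares $h_i\circ p_{i_0i}=q_{\varphi(i_0)\varphi(i)}\circ h_{i_0}$ together with $\varphi(i)=\varphi(j)$. So your version is the one that actually establishes the lemma in the generality the paper claims, namely types with constants.

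A few minor remarks. The reduction to $i<j$ is not used by your main construction; it only serves your closing remark on the constant-free case. The step from ``not a monomorphism'' to ``not injective'' is just the contrapositive of the trivial implication injective $\Rightarrow$ monomorphism, so it needs no special care. Finally, you need a disjoint copy of $\A_{i_0}$ for the top fiber, which is harmless.
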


\begin{proof}
    Suppose by contradiction that $\varphi$ is not a monomorphism, that is $\varphi$ is not injective, i.e. there exist $i_1,i_2 \in I $ such that $i_1 \neq i_2 $ and $ \varphi(i_1)=\varphi(i_2)$. Consider the constant maps $\psi_1, \psi_2\colon \mathbf{2} \rightarrow \mathbf{I}$ onto $i_1$ and $i_2$, respectively, 
    which are semilattice homomorphisms, the semilattice direct system $\mathbbm{2}$ and the morphisms $k_1,k_2\colon \mathbbm{2} \rightarrow \mathbb{A}$ whose associated homomorphisms are $\psi_1$ and $\psi_2$, respectively. Then clearly $h\circ k_1=h\circ k_2$ but $k_1 \neq k_2$, in contradiction with the hypothesis that $h$ is a monomorphism.
\end{proof} 

\begin{lemma}\label{lemma: MI2}
    Let $\A$ and $\B$ be P\l onka sums over semilattice direct systems $((A_i)_{i\in I}, (I, \le), (p_{ij})_{i\le j})$ and $((B_h)_{h\in K}, (K, \le), (q_{hk})_{h\le k})$, respectively, in a strongly irregular variety $\V$ and $h=(\varphi,(h_i)_{i\in I})\in Hom(\A,\B)$ a monomorphism. Then $\forall i\in I: h_i:\A_i \rightarrow \B_{\varphi(i)}$ is a monomorphism.
\end{lemma}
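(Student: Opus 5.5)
We argue by contradiction, as in Lemma \ref{lemma: MI1}. Suppose that for some $i\in I$ the homomorphism $h_i\colon\A_i\to\B_{\varphi(i)}$ is not a monomorphism (in $\V$). Then there are $\mathbf{C}\in\V$ and two homomorphisms $g_1,g_2\in Hom(\mathbf{C},\A_i)$ with $g_1\neq g_2$ and $h_i\circ g_1=h_i\circ g_2$. The plan is to lift $g_1,g_2$ to homomorphisms $\hat g_1,\hat g_2\colon \mathbf{D}\to\A$, for a suitable $\mathbf{D}\in R(\V)$. These lifts should still be distinct but become equal after composing with $h$, contradicting the assumption that $h$ is a monomorphism (in $R(\V)$, equivalently in the category of semilattice direct systems with zero).

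\textbf{Choice of $\mathbf{D}$ and the lifts.} The most natural choice is to take $\mathbf{D}=\mathbf{C}$ itself. It is a P\l onka sum over the trivial semilattice, hence lies in $R(\V)$. A homomorphism $\mathbf{C}\to\A$ corresponds to a morphism of direct systems $\langle\psi,\{f\}\rangle$, where $\psi$ sends the unique index to some $k\in I$ and $f\colon\mathbf{C}\to\A_k$. We would therefore take $\hat g_j=\langle \psi_i, \{g_j\}\rangle$ with $\psi_i$ the constant map onto $i$.

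\textbf{Main obstacle.} Checking that $\psi_i$ is a semilattice homomorphism \emph{with zero} is the step I expect to cause trouble when the type has constants. A morphism must send the least element to the least element $i_0$, so a one-element index semilattice can only be mapped onto $i_0$. Equivalently, the constants of $\mathbf{C}$ must go to constants of $\A$, which lie in $\A_{i_0}$, while $g_j(c^{\mathbf{C}})=c^{\A_i}$ need not be $c^{\A}$ unless $i=i_0$. To handle this I would replace $\mathbf{C}$ by the P\l onka sum $\mathbf{D}$ over the two-element chain $0<1$ with fibers $\A_{i_0}$ at $0$ and $\mathbf{C}$ at $1$. The transition map $\A_{i_0}\to\mathbf{C}$ is the unique homomorphism from the subalgebra generated by the constants, so $\A_{i_0}$ should be the initial algebra $Sg(\emptyset)$. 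If it is not, I would use the initial $\V$-algebra $\mathbf{F}_\V(\emptyset)$ instead. The semilattice map is then $0\mapsto i_0$, $1\mapsto i$. The fiber maps are the canonical map into $\A_{i_0}$ (respectively the identity) and $g_j$. Commutativity of the square holds because both paths are homomorphisms out of an initial algebra into $\A_i$. When the type has no constants the simpler $\mathbf{D}=\mathbf{C}$ suffices, and the zero requirement is vacuous after dropping it. The equivalence between $R(\V)$ and semilattice direct systems \cite[Th. 3.3.8]{Bonziobook} then makes $\hat g_1,\hat g_2$ genuine homomorphisms into $\A$.

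\textbf{Conclusion.} The composite $h\circ\hat g_j$ has semilattice part $\varphi\circ\psi$, which is the same for $j=1,2$. Its fiber part on $\mathbf{C}$ is $h_i\circ g_j$, and these agree by assumption. On the bottom fiber the two composites agree trivially, again by initiality. Hence $h\circ\hat g_1=h\circ\hat g_2$. On the other hand $\hat g_1\neq\hat g_2$, since they differ on $C$ exactly where $g_1\neq g_2$. This contradicts the assumption that $h$ is a monomorphism, so every $h_i$ is a monomorphism.
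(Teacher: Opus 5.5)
Your argument is correct, but it takes a different route from the paper's.

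\textbf{The paper's route.} The paper works with elements. It invokes monomorphism injectivity of $\V$ (a hypothesis not in the statement) to get $a\neq b$ in $A_i$ with $h_i(a)=h_i(b)$. It then tests $h$ against the system $\mathbbm{2}$ of two trivial algebras, using the constant index map $\mathbf{2}\to\mathbf{I}$ onto $i$ and fiber maps constant onto $a$ and onto $b$.

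\textbf{Your route.} You work directly with the definition of a $\V$-monomorphism. You test against an arbitrary $\mathbf{C}\in\V$ and transport $g_1,g_2$ into $R(\V)$ through the two-fiber P\l onka sum $\mathbf{D}$ whose bottom fiber is the initial algebra $\mathbf{F}_\V(\emptyset)$.

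\textbf{What each buys.} Your version proves the lemma exactly as stated, with no MI hypothesis on $\V$. Its test maps are genuine homomorphisms even when the type has constants. The paper's test maps are homomorphisms only when $\{a\}$ and $\{b\}$ are one-element subuniverses of $\A_i$, which fails for instance for a non-identity element of a group. Moreover, with constants the index map must send the bottom element to $i_0$, which the constant map onto $i\neq i_0$ does not. The paper's argument is shorter and aims straight at injectivity of $h_i$. You recover injectivity as well by taking $\mathbf{C}=\mathbf{F}_\V(x)$ with $g_1(x)=a$ and $g_2(x)=b$; the same test shows that every variety, in particular $\V$, has monomorphism injectivity.

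\textbf{One cleanup.} Drop the option of putting $\A_{i_0}$ in the bottom fiber. A homomorphism $\A_{i_0}\to\mathbf{C}$ need not exist: for Boolean algebras, take $\A_{i_0}$ trivial and $\mathbf{C}$ non-trivial. Also, $\A_{i_0}$ is not in general generated by the constants. Always use a disjoint copy of $\mathbf{F}_\V(\emptyset)$, which exists because the type has constants. With that choice the commuting square, the agreement of $h\circ\hat g_1$ and $h\circ\hat g_2$ on the bottom fiber, and the case $i=i_0$ are all immediate.
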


\begin{proof}
    Suppose, by contradiction, that there exists $i\in I$ such that $h_i\colon\A_i \rightarrow \B_{\varphi(i)}$ is not a monomorphism, that is $h_i$ is not injective, since by assumption $\V$ has monomorphism injectivity. So we can find $a,b\in A_i$ such that $h_i(a)=h_i(b)$. Let $\psi\colon\mathbf{2} \rightarrow \mathbf{I}$ be the constant (semilattice) homomorphism onto $\{i\}$. Define the homomorphisms $k_1,k_2\colon\mathbbm{2} \rightarrow \mathbb{A}$ (whose associated semilattice homomorphism is $\psi$) as the constant maps onto $\{a\}$ and $\{b\}$, respectively. 
    It follows that $h\circ k_1=h\circ k_2$ but $k_1 \neq k_2$, in contrast with the hypothesis that $h$ is a monomorphism.
\end{proof}

\noindent The combination of Lemmas \ref{lemma: MI1} and \ref{lemma: MI2} yields immediately the following. 

\begin{theorem}
    Let $\V$ be a strongly irregular variety with monomorphism injectivity, then its regularization $R(\V)$ has monomorphism injectivity.
\end{theorem}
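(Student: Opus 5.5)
Let $\A,\B\in R(\V)$ and let $h\colon\A\to\B$ be an $R(\V)$-monomorphism; we show $h$ is injective. By Theorem \ref{th: regularization of st.irregular varieties}, $\A$ and $\B$ are P\l onka sums over semilattice direct systems $\mathbb{A}$ (indexed by $\I$) and $\mathbb{B}$ (indexed by $\mathbf{K}$). By the categorical equivalence between $R(\V)$ and semilattice direct systems of algebras in $\V$ (\cite[Th. 3.3.8]{Bonziobook}), $h$ corresponds to a morphism $(\varphi,(h_i)_{i\in I})\colon\mathbb{A}\to\mathbb{B}$. This correspondence preserves and reflects monomorphisms, since it is an equivalence of categories. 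Concretely, $h(a)=h_i(a)\in B_{\varphi(i)}$ for $a\in A_i$.

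First, Lemma \ref{lemma: MI1} shows that $\varphi$ is a monomorphism, and its proof shows that $\varphi$ is in fact injective. Second, Lemma \ref{lemma: MI2}, which uses monomorphism injectivity of $\V$, shows that each $h_i\colon\A_i\to\B_{\varphi(i)}$ is injective. To keep the argument self-contained, I would check the test objects used there. The P\l onka sum over $\mathbbm{2}$ of trivial algebras is the two-element semilattice-type algebra in $\SL\subseteq R(\V)$. The morphisms $k_1,k_2$ out of it land in $\mathbb{A}$ and pick out the elements $i_1,i_2$ (respectively $a,b$). Constant maps onto a single element $a$ of a fiber are homomorphisms from a one-element algebra only when $a$ is idempotent. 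So it is safer to use, as the test algebra, the free $R(\V)$-algebra $\mathbf{F}$ on one generator $x$: the maps $x\mapsto a$ and $x\mapsto b$ give $k_1\neq k_2$ with $h\circ k_1=h\circ k_2$ whenever $h(a)=h(b)$. This already shows directly that a monomorphism separates any two elements it identifies, so injectivity follows at once.

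Third, I combine the two facts. Suppose $h(a)=h(b)$ with $a\in A_i$ and $b\in A_j$. Then $h(a)\in B_{\varphi(i)}$ and $h(b)\in B_{\varphi(j)}$. Since the fibers of $\B$ are disjoint, $\varphi(i)=\varphi(j)$, and injectivity of $\varphi$ gives $i=j$. Then $h_i(a)=h_i(b)$, and injectivity of $h_i$ gives $a=b$.

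The main obstacle is the correctness of the test morphisms in Lemmas \ref{lemma: MI1}--\ref{lemma: MI2}. A constant map from a trivial fiber onto a non-idempotent element need not be a homomorphism, and in the presence of constants the least fiber must also be respected. I would therefore first try to justify these lemmas as stated. If that fails, I would replace them by the one-generated free algebra argument above. That argument is valid in any variety and in fact shows that monomorphism injectivity holds in $R(\V)$ directly.
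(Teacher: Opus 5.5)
Your proposal is correct, but the step that carries it is not the paper's argument. The paper passes to semilattice direct systems and combines Lemma~\ref{lemma: MI1} (injectivity of $\varphi$) with Lemma~\ref{lemma: MI2} (injectivity of each $h_i$, using monomorphism injectivity of $\V$). These are exactly your first and third steps.

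Your one-generated free algebra argument is instead the standard proof that \emph{every} variety has monomorphism injectivity. If $a\neq b$ and $h(a)=h(b)$, let $k_1,k_2$ be the homomorphisms out of the free $R(\V)$-algebra on $x$ given by $x\mapsto a$ and $x\mapsto b$. They are distinct, while $h\circ k_1$ and $h\circ k_2$ agree on the generator and hence coincide. This uses neither the P\l onka decomposition, nor strong irregularity, nor the hypothesis on $\V$, so that hypothesis is automatically satisfied. You should therefore present this as the proof itself. Drop the categorical equivalence, Lemmas~\ref{lemma: MI1}--\ref{lemma: MI2} and the combining step, rather than keeping the free algebra as a fallback.

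Your doubts about the paper's test objects are justified. A morphism $\mathbbm{2}\to\mathbb{A}$ needs one-element subalgebras in the relevant fibers. With constants, the bottom of $\mathbf{2}$ must also go to $i_0$, so a constant map onto $i_1\neq i_0$ is not admissible. Two concrete failures:
\begin{itemize}
\item The P\l onka sum over $\mathbbm{2}$ of trivial Boolean algebras satisfies $0\approx 1$. It therefore has no homomorphism into an involutive bisemilattice whose bottom fiber is nontrivial.
\item In a Clifford semigroup, the constant map onto a non-identity element of a group fiber is not a homomorphism.
\end{itemize}
The paper's route offers a fiberwise picture parallel to its treatment of epimorphisms. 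Yours gives a short proof free of these gaps. The fiberwise conclusions also follow from injectivity of $h$. Each $h_i$ is a restriction of $h$. If $\varphi(i_1)=\varphi(i_2)$, then for $a\in A_{i_1}$ and $b\in A_{i_2}$ we get $h(a\cdot b)=h(a)$, hence $a\cdot b=a$ and $i_2\le i_1$; symmetrically $i_1\le i_2$.
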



\section*{Ackowlegments}
We gratefully acknowledge the support of the following funding sources:
\begin{itemize}
    \item the University of Cagliari, through the StartUp project \emph{GraphNet} (F25F21002720001);
    \item the European Union (Horizon Europe Research and Innovation Programme), \\
    \noindent through the MSCA-RISE action MOSAIC (Grant agreement ID: 101007627);
    \item the Italian Ministry of Education, University and Research, through the PRIN 2022 project DeKLA (``Developing Kleene Logics and their Applications'', project code: 2022SM4XC8), the PRIN 2022 project "The variety of grounding" (project code: 2022NTCHYF), and the PRIN Pnrr project ``Quantum Models for Logic, Computation and Natural Processes (Qm4Np)'' (project code: P2022A52CR);
    \item the Fondazione di Sardegna, through the project MAPS (F73C23001550007);
    \item the ASL (Association for Symbolic Logic) for the Travel Grant issued to G. Zecchini for the participation to the LATD 2025 Conference in Siena; 
    \item the INDAM GNSAGA (Gruppo Nazionale per le Strutture Algebriche, Geometriche e loro Applicazioni).
\end{itemize} 

The results in the paper have been presented in various conferences and seminars. The authors express their gratitude in particular to the members of the ``general algebra'' group of the Warsaw University of Technology, and in particular to Anna Romanowska and Anna Zamojska-Dzienio. We also thank the members of the ALOPHIS group of the University of Cagliari and, in particular, Francesco Paoli.


\end{document}